\newcommand\redout{\bgroup\markoverwith
{\textcolor{red}{\rule[.5ex]{2pt}{0.4pt}}}\ULon}
\tikzset{%
  highlight/.style={rectangle,rounded corners,fill=red!15,draw,
    fill opacity=0.5,thick,inner sep=0pt}
}
\newcolumntype{x}[1]{>{\centering\arraybackslash\hspace{0pt}}p{#1}}
\DeclareMathOperator{\C}{\mathcal{C}}
\DeclareMathOperator{\rk}{rk}
\newtheorem{theorem}{Theorem}[section]
\newtheorem{corollary}[theorem]{Corollary}
\newtheorem{definition}[theorem]{Definition}
\newtheorem{proposition}[theorem]{Proposition}
\newtheorem{example}[theorem]{Example}
\newtheorem{remark}[theorem]{Remark}
\newtheorem{property}[theorem]{Property}
\newtheorem{question}[theorem]{Question}
\newcommand{\mM}{\mathcal{M}}
\newcommand{\mL}{\mathcal{L}}
\newcommand{\mU}{\mathcal{U}}
\newcommand{\mS}{\mathcal{S}}
\newcommand{\Fmk}{[n,k]_{q^m/q}}
\newcommand{\fqm}{\mathbb{F}_{q^m}}
\newcommand{\F}{{\mathbb F}}
\newcommand{\GL}{\hbox{{\rm GL}}}
\newcommand{\fq}{{\mathbb F}_{q}}
\newcommand{\la}{\langle}
\newcommand{\ra}{\rangle}
\newcommand{\N}{\mathrm{N}}
\newcommand{\Fm}{\mathbb{F}_{q^m}}
\title{Delsarte duality on subspaces and applications to rank-metric codes and $q$-matroids}
\author{Martino Borello$^{1,2}$}
\author{Olga Polverino$^3$}
\author{Ferdinando Zullo$^3$}
\address{$^1$Universit\'e Paris 8, Laboratoire de G\'eom\'etrie, Analyse et Applications, LAGA, Universit\'e Sorbonne Paris Nord, CNRS, UMR 7539, France.}
\address{$^2$Inria Saclay, France.}
\address{$^3$Dipartimento di Matematica e Fisica, Universit\`a degli Studi della Campania ``Luigi Vanvitelli'', I--\,81100 Caserta, Italy}
\email{martino.borello@univ-paris8.fr, \{olga.polverino,ferdinando.zullo\}@unicampania.it}
\begin{document}
\maketitle

\begin{abstract}
We study the interplay between the lattice of $\mathbb{F}_{q^m}$-subspaces and the lattice of $\mathbb{F}_q$-subspaces of an $\mathbb{F}_{q^m}$-vector space. Introducing notions of weight and defect relative to an $\mathbb{F}_q$-subspace, we analyze the sequence of maximum non-zero defects. 
We establish a correspondence between subspaces of positive  defect and their Delsarte duals, enabling explicit characterizations of the associated sequences of maximum non-zero defects. Our framework unifies several classes of subspaces studied in finite geometry and connects them to linear rank-metric codes by providing a new geometric interpretation of code duality.
Building on these results, we characterize classes of rank-metric codes closed under duality, including MRD, near MRD, quasi-MRD, and a new family of $(\mathbf{n}, \mathbf{k})$-MRD codes. Finally, we explore applications to $q$-matroids, by studying the problem of $\mathbb{F}_{q^m}$-representability for direct sums of uniform $q$-matroids and describing their rank generating functions.
\end{abstract}

\textbf{Keywords}: Delsarte dual; Decomposable subspace; Rank-metric code; $q$-matroid.

\textbf{Mathematics Subject Classification}. 51E20;   94B27; 05B35.

\tableofcontents

\bigskip

\section*{Introduction}

Consider an $\F_{q^m}$-vector space $\mathbb{V}(k, q^m)$ of dimension $k$. Clearly, $\mathbb{V}(k, q^m)$ can also be viewed as an $\F_q$-vector space of dimension $mk$. Therefore, we can consider the lattice of $\F_{q^m}$-subspaces of $\mathbb{V}(k, q^m)$, denoted by ${\mathcal L}(\mathbb{V}(k, q^m))$, and the lattice of $\F_q$-subspaces of $\mathbb{V}(k, q^m)$, denoted by ${\mathcal L}_q(\mathbb{V}(k, q^m))$.
This paper focuses on the interplay between the two lattices ${\mathcal L}(\mathbb{V}(k, q^m))$ and ${\mathcal L}_q(\mathbb{V}(k, q^m))$.
Such an area of research is certainly not new, especially within finite geometry: the classical notion of subgeometry can be rephrased in this terminology. Indeed, a subgeometry can be defined as the set of points defined by vectors of an element $U \in {\mathcal L}_q(\mathbb{V}(k, q^m))$ of dimension $k$, with the property that every element $T \in {\mathcal L}(\mathbb{V}(k, q^m))$ intersects $U$ in an $\F_q$-subspace of dimension at most $\dim_{\F_{q^m}}(T)$. This concept led to the theory of $\F_q$-linear sets, which has evolved into a vibrant and widely studied area of research. Over the years, $\F_q$-linear sets have found numerous applications in different contexts of finite geometry and coding theory, including the construction of blocking sets in projective spaces, the study of semifield flocks, translation ovoids, and, more recently, in the theory of rank-metric codes; see e.g. \cite{polverino2010linear,lavrauw2015field}. However, we will describe the context, results, and connections within the vectorial framework, as this appears to be the most suitable for the links with rank-metric codes and $q$-matroids that we will explore in this paper. 

Two central notions will play a key role in our study: the weight and the defect of the elements in ${\mathcal L}(\mathbb{V}(k, q^m))$ with respect to a subspace $U$ in ${\mathcal L}_q(\mathbb{V}(k, q^m))$.
Intuitively, the weight of a subspace $T \in {\mathcal L}(\mathbb{V}(k, q^m))$ with respect to $U$ measures how much of $U$ is \emph{concentrated} in $T$, whereas the defect of $T$ measures how far $T \cap U$ is from being a subgeometry of $T$. These two notions are closely related, as we shall see.
The main focus is the study of the weights and defects of subspaces in ${\mathcal L}(\mathbb{V}(k, q^m))$ (of any dimension) with respect to $U$. This naturally leads us to define, for each fixed dimension, the maximum defect that  elements of ${\mathcal L}(\mathbb{V}(k, q^m))$ can have with respect to $U$.
We prove that the sequence of maximum defects is non-decreasing with respect to the dimension. Therefore, it suffices to consider the restriction of this sequence to the strictly increasing subsequence, which we call the sequence of maximum non-zero defects.
After studying some basic properties of this sequence, we recall the notion of the Delsarte dual, introduced in \cite{csajbok2021generalising} in connection with rank-metric codes.
We investigate the relationship between the sequences of maximum non-zero defects of a subspace $U \in \mathcal{L}_q(\mathbb{V}(k,q^m))$ and its Delsarte dual $U^d$.
This connection involves a detailed analysis of the subspaces of $\mathcal{L}(\mathbb{V}(k,q^m))$ having positive defect with respect to $U$ and those of $\mathcal{L}(\mathbb{V}(\dim{\fq}(U)-k,q^m))$ having positive defect with respect to $U^d$.
In particular, we establish a one-to-one correspondence between the subspaces in $\mathcal{L}(\mathbb{V}(k,q^m))$ having positive defect with respect to $U$ and minimal with respect to their defect, and the subspaces in $\mathcal{L}(\mathbb{V}(\dim_{\fq}(U)-k,q^m))$ having positive defect with respect to $U^d$ and minimal with respect to their defect.
Thanks to this correspondence, we are able to determine the maximum non-zero defects of the Delsarte dual $U^d$ in terms of the maximum non-zero defects of $U$.
As a consequence of these results, we analyze some known classes of subspaces and we study their Delsarte duals. In particular, we have considered:
\begin{itemize}
    \item Scattered subspaces with respect to $h$-subspaces, introduced in \cite{lunardon2017mrd,sheekeyVdV,csajbok2021generalising,blokhuis2000scattered};
    \item $1$-defect subspaces, including clubs (introduced in \cite{fancsali2006maximal,fancsali2009description} and in \cite{mannaert2025clubs} for larger dimensions);
    \item Decomposable subspaces, introduced in \cite{bartoli2024Exceptional_sequences,santonastaso2024completely};
    \item $\mathbf{k}$-scattered subspaces with respect to the hyperplanes, introduced in \cite{alfarano2024representability}.
\end{itemize}

This study has an immediate impact on the theory of rank-metric codes. Indeed, in \cite{Randrianarisoa2020ageometric} (see also \cite{alfarano2021linearcutting,sheekey2019scatterd}) it is shown that to any linear rank-metric code in $\F_{q^m}^n$ of dimension $k$, one can associate an element of $\mathcal{L}_q(\F_{q^m}^k)$ (and conversely) in such a way that the metric properties of the code can be studied directly from the associated subspace.
The main drawback of this connection is that there is no direct way to view the subspaces associated with a code and its dual within the same ambient space, which makes the study of duality-invariant properties from a geometric perspective difficult. Motivated by this limitation, we propose a new framework for describing the connection between linear rank-metric codes and subspaces in $\mathcal{L}_q(\F_{q^m}^k)$, which allows a unified approach to studying the systems associated with a code and its dual. The key point has been to view any subspace as the projection of a subgeometry; see \cite{lunardon2004translation}.
This framework allows us to prove that the operation of duality for linear rank-metric codes corresponds to the Delsarte duality of subspaces, previously proved only for a few cases (see \cite{csajbok2021generalising}).
In this new framework we have also shown how we can obtain previously known results for rank-metric codes, such as the description of the support of subcodes, the geometric view of generalized weights and the Wei-type duality.
Moreover, relying on the proposed geometric framework, we study classes of codes which are closed under duality, including MRD codes (already known since \cite{de78,ga85a}), near MRD codes (see \cite{marino2023evasive}), quasi-MRD codes (see \cite{marino2023evasive,de2018weight}) and $(\mathbf{n},\mathbf{k})$-MRD codes. The latter class has been introduced in this paper, and it can be defined as those direct sums of MRD codes whose codewords have weight lower bounded by the weights of an MRD code with the same parameters, unless these codewords belong to special projections. We prove that the geometric description of these codes corresponds to $\mathbf{k}$-scattered subspaces with respect to hyperplanes and we are able to prove that, in the case of two components, the weight distribution of such codes only depends on their parameters. We conclude the paper with an application to $q$-matroids. 
A $q$-matroid is the $q$-analogue of a classical matroid, where the role of subsets of a finite set is replaced by subspaces of a finite-dimensional vector space over a finite field. They provide a natural framework for studying rank-metric codes, as they capture geometric and combinatorial properties of codes in a way analogous to how classical matroids capture properties of linear block codes; see \cite{gorla2019rank, shiromoto2019codes, ghorpade2020polymatroid, byrne2022constructions, byrne2021weighted, gluesing2021q} for some references. One way to construct a $q$-matroid is via the supports of the codewords of a linear rank-metric code in $\F_{q^m}^n$. A $q$-matroid obtained in this way is called $\mathbb{F}_{q^m}$-representable. A well-studied class of $q$-matroids is the class of uniform $q$-matroids, which correspond to the $q$-matroids associated with MRD codes. Recently, in \cite{ceria2021direct} (see also \cite{gluesing2023decompositions}) the notion of the direct sum of $q$-matroids was introduced, and the question of whether the direct sum of uniform $q$-matroids is $\F_{q^m}$-representable for some $m \in \mathbb{N}$ was answered positively in \cite{alfarano2024representability}.
The problem that remains open is the determination of the smallest field over which these $q$-matroids are representable.
Using the connection between an $\fqm$-representation of the direct sum of uniform $q$-matroids and $\mathbf{k}$-scattered subspaces with respect to the hyperplanes established in \cite{alfarano2024representability}, we prove some new instances to this problem.
Finally, we also prove that rank generating function of the direct sum of uniform $q$-matroids only depends on the parameters of the $q$-matroid.

\subsection*{Organization of the paper}

In Section 1, we investigate the combinatorics of $\F_q$-subspaces of $\mathbb{V}(k,q^m)$, focusing on the notions of weight and defect with respect to a given $\F_q$-subspace. We define the sequence of maximum non-zero defects, study its basic properties, and introduce the Delsarte dual of an $\F_q$-subspace, analysing the correspondence between the sequences of the maximum defects of a subspace and its dual. We also examine the action of this duality on several notable classes of subspaces.

In Section 2, we turn to rank-metric codes and introduce a new framework linking where to study subspaces associated with a rank-metric code and its dual within the same ambient space. This geometric viewpoint enables us to reinterpret known results on supports, generalized weights, and Wei-type duality, and to characterize families of codes closed under Delsarte duality.

In Section 3, we apply our results to $q$-matroids. Using the established connection between $k$-scattered subspaces and the representability of direct sums of uniform $q$-matroids, we prove new instances of representability and show that the rank generating function of such direct sums depends only on the parameters of the $q$-matroid.

\section{Combinatorics of $\fq$-subspaces in $\mathbb{V}(k,q^m)$}

In this section, we study the behavior of the $\fq$-subspaces of $\mathbb{V}(k,q^m)$   with respect to its lattice of $\fqm$-subspaces.
This study is carried out up to the action of the general linear group $\GL(k,q^m)$ acting on the lattices ${\mathcal L}(\mathbb{V}(k,q^m))$ and ${\mathcal L}_q(\mathbb{V}(k,q^m))$.
In this setting, as we will see later, in addition to the dimension of the $\fq$-subspaces, the following integers serve as useful invariants.  

\begin{definition}
    Let $\mathbb{V}(k,q^m)$ be an $\F_{q^m}$-vector space of dimension $k$. Let $U$ be an $\fq$-subspace of $\mathbb{V}(k,q^m)$. For any $\fqm$-subspace $T$ of $\mathbb{V}(k,q^m)$ we define the \textbf{weight of} $T$ with respect to $U$ as
    \[ w_U(T)=\dim_{\fq}(U\cap T), \]
    and the \textbf{defect of} $T$ with respect to $U$ as
    \[ \varepsilon_U(T)=w_U(T)-\dim_{\fqm}(T). \]
\end{definition}

We start by describing some properties, which are easily verified.

\begin{property} \label{property:defect}
  Let $U$ be an $\fq$-subspace of $\mathbb{V}(k,q^m)$ and let $T\in {\mathcal L}(\mathbb{V}(k,q^m))$.  
  \begin{itemize}
    \item[1.] If $\langle T\cap U \rangle_{\fqm}=T$, then $\varepsilon_U(T)\geq 0$.
    \item[2.] If $\varepsilon_U(T)\geq 0$, then $\varepsilon_U(T)= \varepsilon_U(T')$ where $T'=\langle T\cap U \rangle_{\fqm} \subseteq T$.
    \item[3.] If $\Phi \in \GL(k,q^m)$ then $\dim_{\fq}(U)=\dim_{\fq}(\Phi(U))$,   $ w_U(T)=w_{\Phi(U)}(\Phi(T))$ and  $\varepsilon_U(T)=\varepsilon_{\Phi(U)}(\Phi(T))$.
    \end{itemize}
    Also, if $T_1$ and $T_2$ are in  ${\mathcal L}(\mathbb{V}(k,q^m))$, we have 
    \begin{equation} \label{eq: subadd_weight}
       w_U(T_1+T_2)\geq  w_U(T_1)+w_U(T_2)-w_U(T_1\cap T_2). 
        \end{equation}
    \begin{equation} \label{eq: subadd_defect}
       \varepsilon_U(T_1+T_2)\geq  \varepsilon_U(T_1)+\varepsilon_U(T_2)-\varepsilon_U(T_1\cap T_2).    
        \end{equation}
 Also,  (\ref{eq: subadd_weight})  and (\ref{eq: subadd_defect}) are equalities if and only if $(T_1+T_2)\cap U=(T_1\cap U)+(T_2\cap U)$.
\end{property}

It follows from $3.$ of Property \ref{property:defect} that the dimension, the distributions of  weights and  defects of the elements of ${\mathcal L}(\mathbb{V}(k,q^m))$ with respect to an $\fq$-subspace $U$,  are invariant with respect to the action of the group $\GL(k,q^m)$. 
However, in some cases,  the dimension of the $\F_q$-subspace $U$  is sufficient to uniquely identify $U$ with respect to the action of $\GL(k,q^m)$.

\begin{property}\label{prop:subgeometry}
Let $U$ be an $\fq$-subspace of $\mathbb{V}(k,q^m)$ such that 
\begin{equation}\label{eq:subgeometry}
\dim_{\fqm}(\langle U\rangle_{\fqm})=\dim_{\fq} (U),
\end{equation}
then the following holds:
\begin{itemize}
    \item[1.] $\varepsilon_U(T)\leq 0$ for any $T$ in  ${\mathcal L}(\mathbb{V}(k,q^m))$; 
    \item[2.] if $U'$ is an $\fq$-subspace of  $\mathbb{V}(k,q^m)$ such that $\dim_{\fq} (U)=\dim_{\fq}(U')$  and $\dim_{\fqm}(\langle U'\rangle_{\fqm})=\dim_{\fq}(U')$, then there exists $\Phi \in \GL(k,q^m)$ such that $\Phi(U)=U'$;
    \item[3.] let $S$ be any $\fq$-subspace  of $U$ of dimension $h$, then there exists a unique $\fqm$-subspace $S^*$ of $\mathbb{V}(k,q^m)$ of dimension $h$ such that $S=U\cap S^*$.
\end{itemize}
Also, 1.\ is equivalent to \eqref{eq:subgeometry}, so that any $\fq$-subspace $U$ of $\mathbb{V}(k,q^m)$ that satisfies  1.\ must satisfy (\ref{eq:subgeometry}). 
\end{property}

The $\fq$-subspaces described in Property \ref{prop:subgeometry} will be called {\bf $\fq$-subgeometries}, more precisely a $t$-dimensional $\fq$-subgeometry of $\mathbb{V}(k,q^m)$ is an $\fq$-subspace $W$ of  $\mathbb{V}(k,q^m)$ having dimension $t$ such that $\dim_{\fqm}(\langle W\rangle_{\fqm})=\dim_{\fq}(W)$.

Property \ref{prop:subgeometry} states that $\fq$-subgeometries are those $\fq$-subspaces with respect to which there are no element in ${\mathcal L}(\mathbb{V}(k,q^m))$ with positive defect. Generally speaking, defects with respect to an $\fq$-subspace measure how far the subspace is from being a subgeometry. So, this motivates the following definition.

\begin{definition} 
\label{def:maximum_defect} Let $U\in \mathcal{L}_q(\mathbb{V}(k,q^m))$. For any $r\in \{0,1,\dots,k\}$, let
\[\varepsilon_U(r)=\max\{\varepsilon_U(T) \, : \, T\in  {\mathcal L}(\mathbb{V}(k,q^m)) \, \mbox{and} \, \dim_{\fqm} (T)=r\},\]
i.e. $\varepsilon_U(r)$ is the maximum defect of the  $r$-dimensional subspaces of $\mathbb{V}(k,q^m)$ with respect to $U$. Note that $\varepsilon_U(0)=0$ and $\varepsilon_U(k)=\dim_{\fq}(U)-k$. 
\end{definition}

\begin{remark}
  Another way to characterize $\fq$-subgeometries is via the maximum defects. Indeed, $U$ is an $\fq$-subgeometry if and only if $\varepsilon_U(r)\leq 0$ for any $r$.  Also, if $\langle U\rangle_{\fqm}=\mathbb{V}(k,q^m)$, then $U$ is an $\fq$-subgeometry if and only if $\varepsilon_U(r)= 0$ for any $r\in \{0,1,\dots, k\}$.
\end{remark}

The maximum defects with respect to an $\fq$-subspace satisfy the following monotonicity property.

\begin{property} \label{prop: monotonicity_maxi_defect}
Let $U$ be in $\mathcal{L}_q(\mathbb{V}(k,q^m))$ with $\dim_{\fq} (U)=n$  such that $\langle U\rangle_{\fqm}=\mathbb{V}(k,q^m)$. Then

  \begin{equation} \label{eq:monotonicity_defects}
   0=\varepsilon_U(0) \leq  \varepsilon_U(1) \leq \varepsilon_U(2)\leq \cdots \leq \varepsilon_U(k)=n-k.
\end{equation}
 Hence,  if $\varepsilon_U(j) = 0$ for some $j\in  \{1,\dots,k\}$, then $\varepsilon_U(i) = 0$ for each $i\in \{1,2,\dots,j\}$.   
\end{property}
\begin{proof}
Since $\langle U\rangle_{\fqm}=\mathbb{V}(k,q^m)$, we have $n\geq k\geq 1$ and clearly $\varepsilon_U(k)=n-k\geq 0$ and $\varepsilon_U(1)\geq 0$. Now, let $i\in\{1,2,\dots,k-1\}$ and let $T$ be an $\fqm$-subspace of $\mathbb{V}(k,q^m)$, such that $\dim_{\fqm}(T)=i$ and  $\varepsilon_U(T)=\varepsilon_U(i)$.
Since $i<k$ and $\langle U\rangle_{\fqm}=\mathbb{V}(k,q^m)$, there exists $u'\in U\setminus T$. Hence, $T'=T+\langle  u'\rangle_{\fqm}$ is an $(i+1)$-dimensional $\fqm$-subspace of $\mathbb{V}(k,q^m)$ and by (\ref{eq: subadd_defect}) we have  
\[\varepsilon_U(T')\geq \varepsilon_U(T)+\varepsilon_U(\langle  u'\rangle_{\fqm})\geq \varepsilon_U(i).\]
From which we get $\varepsilon_U(i)\leq \varepsilon_U(i+1)$.   
\end{proof}

Due to the previous property, the behavior of an $\fq$-subspace with respect to the defects of the subspaces in ${\mathcal L}(\mathbb{V}(k,q^m))$ can be directly expressed by those defect that are in strictly increasing order. This yields the following definitions.

\begin{definition} \label{def:non-zero max defects}
    Let $U$ be in $\mathcal{L}_q(\mathbb{V}(k,q^m))$ with $\dim_{\fq}(U)=n$ such that $\langle U\rangle_{\fqm}=\mathbb{V}(k,q^m)$. Let $s$ be the number of distinct non-zero integers appearing in (\ref{eq:monotonicity_defects}) and let 
    
    \begin{equation} \label{eq:non-zero_defects_dimensions}
    0=t_0<t_1<t_2<..<t_s\leq k
   \end{equation}
be such that
     \begin{equation} \label{eq:non-zero_defects}
   0<  \varepsilon_U(t_1) < \varepsilon_U(t_2)< \cdots < \varepsilon_U(t_s)=n-k,
\end{equation}
and 
\begin{equation} \label{eq:non-zero_defects_Mimim_dimensions}
    \varepsilon_U(l)=\varepsilon_U(t_{i}) \,\,\, \mbox{if} \, \,\,  t_i\leq l<t_{i+1}.
   \end{equation}

The sequence of integers (\ref{eq:non-zero_defects}) will be called the {\bf sequence of maximum non-zero defects} with respect to $U$ and the integer $s \in \{1,\dots k\}$ will be called the {\bf length} of the  sequence of maximum non-zero defects.
\end{definition}

In the following, the integers $t_1$ and $\varepsilon_{U}(t_{s-1})$ (the last non-trivial defect) will play a crucial role in the duality process  described in the next section. We will refer to $\varepsilon_{U}(t_{s-1})$ as the \textbf{maximum non-trivial defect}.

\begin{remark} \label{rem: t_i minimum dimension}
With the notation of the above definition, note that if $\varepsilon=\varepsilon_U(t_i)$ for some of the integers $t_i\in \{1,\dots,k\}$ appearing in (\ref{eq:non-zero_defects_dimensions}), then from (\ref{eq:non-zero_defects_Mimim_dimensions})
\[t_i=\min\{\dim_{\fqm} (T) \,:\, \varepsilon_U(T)=\varepsilon\}\]
and if $T$ is an $\F_{q^m}$-subspace of dimension $t_i$ such that $\varepsilon_U(T)=\varepsilon$, then  $T$ is minimal with respect to its defect, i.e. for every $T' <_{\fqm} T$ we have that $\varepsilon_{U}(T')< \varepsilon$.
\end{remark}

In the following section, subspaces that are minimal with respect to their defect will play a key role in describing the behavior of an $\fq$-subspace; therefore, it is necessary to provide a formal definition of such subspaces.

\begin{definition}
 Let $U$ be in $\mathcal{L}_q(\mathbb{V}(k,q^m))$ and let $T$ an $\fqm$-subspace of $\mathbb{V}(k,q^m)$. We say that  $T$  is \textbf{minimal} with respect to its defect (with respect to $U$) if $T' <_{\fqm} T$ implies  $\varepsilon_{U}(T')< \varepsilon_{U}(T)$. Note that from the minimality of $T$ we also get $\la T\cap U\ra_{\fqm}=T$ and hence, by $1.$ of Proposition \ref{property:defect}, if $T\neq \{\underline 0\}$,  we have  $\varepsilon_U(T)>0$.
\end{definition}

Note that the maximum defect $n-k$ is certainly reached by the entire ambient space $\mathbb{V}(k,q^m)$. However, it could happen that $\mathbb{V}(k,q^m)$ is not the subspace of minimum dimension with defect $n-k$. 
In the next property, we prove that this is equivalent to the existence of a hyperplane of $\mathbb{V}(k,q^m)$ of weight $n-1$ with respect to $U$.

\begin{theorem} \label{thm:non degenerate property}
Let $U$ be an $\fq$-subspace of $\mathbb{V}(k,q^m)$,  with $\dim_{\fq} (U)=n$, such that $\langle U\rangle_{\fqm}=\mathbb{V}(k,q^m)$ and let 
\begin{equation} 
   0<  \varepsilon_U(t_1) < \varepsilon_U(t_2)< \cdots < \varepsilon_U(t_s)=n-k,
\end{equation}
the sequence of non-zero maximum defects with respect to $U$ as defined in Definition \ref{def:non-zero max defects}.
We have $t_s<k$ if and only if there exists a hyperplane $H$ of $\mathbb{V}(k,q^m)$ such that $w_U(H)=n-1$. 
\end{theorem}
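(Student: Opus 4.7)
The plan is to reduce both sides of the biconditional to the single numerical statement $\varepsilon_U(k-1)=n-k$, and then invoke the monotonicity Property~\ref{prop: monotonicity_maxi_defect} to close the argument.

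First I would translate the hyperplane condition into the language of defects: a hyperplane $H$ of $\mathbb{V}(k,q^m)$ has $\fqm$-dimension $k-1$, so $w_U(H)=n-1$ is equivalent to $\varepsilon_U(H)=n-1-(k-1)=n-k$. Since $\varepsilon_U(k)=n-k$ is the global maximum of the defect function (by Definition~\ref{def:maximum_defect} together with Property~\ref{prop: monotonicity_maxi_defect}), the existence of such a hyperplane is equivalent to $\varepsilon_U(k-1)=n-k$, i.e.\ to the statement that the maximum defect is already attained in dimension $k-1$.

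For the direction ($t_s<k \Rightarrow$ hyperplane exists), I would use Remark~\ref{rem: t_i minimum dimension}, which identifies $t_s$ as the minimum $\fqm$-dimension of a subspace with defect $n-k$. If $t_s<k$, then by monotonicity,
\[
n-k=\varepsilon_U(t_s)\leq \varepsilon_U(t_s+1)\leq \cdots \leq \varepsilon_U(k-1)\leq \varepsilon_U(k)=n-k,
\]
so $\varepsilon_U(k-1)=n-k$. By definition of $\varepsilon_U(k-1)$ this maximum is achieved, i.e.\ there is a $(k-1)$-dimensional $\fqm$-subspace $H$ with $\varepsilon_U(H)=n-k$, which is the required hyperplane of weight $n-1$.

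For the converse ($\Leftarrow$), a hyperplane $H$ with $w_U(H)=n-1$ has $\varepsilon_U(H)=n-k$, so by the definition of the maximum $\varepsilon_U(k-1)\geq n-k$, and combined with the upper bound $\varepsilon_U(k-1)\leq \varepsilon_U(k)=n-k$ we get equality. Hence the maximum defect $n-k$ is attained in dimension $k-1$, so $t_s\leq k-1<k$ by the minimality characterization of $t_s$. There is no real obstacle here: the whole argument is essentially an unwinding of the definitions plus one application of the monotonicity property, and the only slightly delicate point is the clean translation between "hyperplane of weight $n-1$" and "$(k-1)$-subspace of maximal defect", which the dimension bookkeeping above handles in one line.
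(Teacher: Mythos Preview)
Your proof is correct. The converse direction ($\Leftarrow$) matches the paper's proof verbatim, but your forward direction ($\Rightarrow$) takes a cleaner route than the paper's.

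The paper proves ($\Rightarrow$) constructively: starting from a subspace $T$ of dimension $t_s=k-\rho$ with defect $n-k$, it repeatedly adjoins a vector $\overline u_j\in U\setminus T_{j-1}$ and checks that each step raises the weight by exactly one while preserving the maximal defect, producing after $\rho-1$ steps a hyperplane of weight $n-1$. You instead observe that this inductive climb is already packaged in Property~\ref{prop: monotonicity_maxi_defect}: monotonicity squeezes $\varepsilon_U(k-1)$ between $\varepsilon_U(t_s)=n-k$ and $\varepsilon_U(k)=n-k$, and the finite maximum in the definition of $\varepsilon_U(k-1)$ then hands you the hyperplane directly. Your approach is shorter and avoids duplicating the argument of Property~\ref{prop: monotonicity_maxi_defect}; the paper's version, on the other hand, is self-contained and exhibits an explicit chain $T\subset T_1\subset\cdots\subset T_{\rho-1}=H$ of subspaces all having maximal defect, which is a mildly stronger conclusion even if it is not used later.
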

\begin{proof}
If there exists a hyperplane $H$ of $\mathbb{V}(k,q^m)$ such that $w_U(H)=n-1$, then $\varepsilon_U(H)=n-k$ and hence by Remark \ref{rem: t_i minimum dimension}
\[t_s=\min\{\dim_{\fqm} (T) \,:\, \varepsilon_U(T)=n-k\}\leq k-1.\]
On the other hand, let $t_s=k-\rho$, for some $\rho >0$, and let $T$ be a subspace of $\mathbb{V}(k,q^m)$ such that $\dim_{\fqm} (T)=t_s$ and $\varepsilon_U(T)=n-k$. As a consequence, $w_U(T)=n-\rho <n$. If $\rho=1$ the assertion is proved.  If $\rho >1$, let $\overline{u}_1 \in U \setminus T$ and let  $T_1=T+\langle \overline{u}_1 \rangle_{\fqm}$. The subspace $T_1$ is a $(k-\rho+1)$-dimensional $\fqm$-subspace such that $w_U(T_1)\geq w_U(T)+1=n-\rho+1$ and hence $\varepsilon_U(T_1)\geq n-k$. Since $n-k$ is the maximum defect, 
we get 
\[\varepsilon_U(T_1)= n-k \,\,\, \mbox{and} \,\,\,\, w_U(T_1)= n-\rho+1.\]

If $\rho =2$, the assertion is proved since $T_1$ is an hyperplane.  Otherwise, proceeding inductively after $\rho-1$ steps we get a subspace $T_{\rho-1}$ of dimension $k-1$ and weight $n-1$, proving the assertion.
\end{proof}

\begin{remark}
Let $U$ be in $\mathcal{L}_q(\mathbb{V}(k,q^m))$ with $\dim_{\fq} (U)=n$, such that $\langle U\rangle_{\fqm}=\mathbb{V}(k,q^m)$. Suppose that the integer $t_s$ in Definition \ref{def:non-zero max defects} is less than $k$, and let $T$ be an $\fqm$-subspace of $\mathbb{V}(k,q^m)$ of dimension $t_s$, such that $\varepsilon_U(T)=n-k$.
Then it is not difficult to see that the action of the linear group $\mathrm{GL}(k,q^m)$ on the subspace $U$ is determined by the action of the linear group $\mathrm{GL}(t_s, q^m)$  acting on the $\fq$-subspace $T\cap U$ of $T$. Therefore, for the purposes of our study, it will  not be restrictive to assume that  $t_s=k$;  this is equivalent, by Theorem \ref{thm:non degenerate property}, to requiring that $w_U(H)<n-1$ for every  hyperplane $H$ of $\mathbb{V}(k,q^m)$.
\end{remark}

\subsection{Decomposable and evasive subspaces}

In this subsection, we introduce some definitions related to  decomposable and evasive subspaces, reformulate them in terms of defects, and discuss the consequences of the results from the previous section.

The concept of {\it evasive sets} originated in combinatorics, where a subset $S$ of a set 
$A$ is called $c$-\textbf{evasive} with respect to a family $\mathcal{F}\subseteq 2^A$ if for every $W \in \mathcal{F}$, the size of the intersection $W\cap S$ is at most $c$.
This notion has been introduced to construct Ramsey graphs in \cite{pudlak2004pseudorandom} and later used also in coding theory, see e.g. \cite{sudakov2024evasive,guruswami2013list,guruswami2016explicit}.

More recently, some attention has been paid to $\fq$-subspaces in $\Fm^k$ which are \emph{evasive subspaces}, i.e. they are $c$-evasive with respect to the family of $\Fm$-subspaces of $\Fm^k$ of fixed dimension. 
This definition extends the notion of scattered subspaces originally introduced in \cite{blokhuis2000scattered} and later extended to $h$-scattered subspaces in \cite{lunardon2017mrd,sheekeyVdV,csajbok2021generalising}; see also \cite{bartoli2021evasive,gruica2022generalised}.

\begin{definition}
    Let $\mathbb{V}(k,q^m)$ be an $\F_{q^m}$-vector space of dimension $k$ and let $U$ be an $\fq$-subspace of $\mathbb{V}(k,q^m)$ with $\dim_{\fq} (U)=n$. Let $h$ and $r$ be positive integers such that $h \in [k]$ and $h \leq r \leq km$. We say that $U$ is an $(h,r)$\textbf{-evasive} subspace if for any $h$-dimensional $\fqm$-subspace $T$ we have $\dim_{\fq}(U\cap T)\leq r$. If $r=h$, an $(h,h)$-evasive subspace is called \textbf{$h$-scattered} subspace. Furthermore, if $h=1$, then a $1$-scattered subspace will be simply called a \textbf{scattered} subspace.\\
    By \cite[Theorem 2.3]{csajbok2021generalising}, if $U$ is an $h$-scattered  $\fq$-subspace of $\mathbb{V}(k,q^m)$ with $\dim_{\fq}(U) >k$, then  $\dim_{\fq}(U) \leq \frac{km}{h+1}$. Subspaces reaching equality in the prevoious bound are called {\bf maximum $h$-scattered subspaces}.
\end{definition}

We can rephrase the definition of $(h,r)$-evasive subspace in terms of the maximum defect of $U$ with respect to $h$-dimensional subspaces.

\begin{property} \label{prop:(h,r)-evasive-defect}
Let $U$ be in $\mathcal{L}_q(\mathbb{V}(k,q^m))$ such that $\langle U\rangle_{\fqm}=\mathbb{V}(k,q^m)$ and let $h$ and $r$ be positive integers such that $h \in [k]$ and $h \leq r \leq km$. The $\fq$-subspace $U$ is $(h,r)$-evasive subspace if and only if $\varepsilon_U(h)\leq r-h$. Also, if $U$ is $(h,r)$-evasive then $U$ is $(h',r-(h-h'))$-evasive for any $1\leq h'\leq h$. 
\end{property}
\begin{proof}
    Note that the second part is a consequence of Property \ref{prop: monotonicity_maxi_defect}.
\end{proof}

The definition of {\it decomposable} $\fq$-subspace, which we will give shortly, extends similar notions already found in the literature and corresponds, in vector terms, to the definition of {\it $\fq$-linear set} admitting subspaces of complementary weight given in \cite{adriaensen2023minimum,napolitano2022linear,zullo2023multi,bartoli2024Exceptional_sequences}.

\begin{definition} \label{def:decomp}
   Let $\mathbf{k}=(k_1,\ldots,k_t)\in\mathbb N^t$ with $k=k_1+\ldots+k_t$, $k_1\leq k_2\leq \cdots \leq k_t$ and let $\mathbf{n}=(n_1,\ldots,n_t)\in\mathbb N^t$ with $n=n_1+\ldots+n_t$ and $k_i\leq n_i$ for $i\in [t]$. An $\fq$-subspace $U$ of $\mathbb{V}(k,q^m)$ of dimension $n$ is said to be {\bf decomposable} of type $(\mathbf{k}, \mathbf{n})$ if
 \begin{equation} \label{def:U_complwe}
 U=U_1\oplus \ldots \oplus U_t,
 \end{equation}
 where $U_i$ is an $\fq$-subspace of dimension $n_i$ for $i\in [t]$ and
  \begin{equation} \label{def:F_complwe}
      \mathbb{V}(k,q^m)=F_1\oplus \ldots \oplus F_t ,
  \end{equation}
  where $F_i=\langle U_i\rangle_{\fqm}$ and  $\dim_{\fqm} (F_i)=k_i $ for  $i\in [t]$. Note that $w_U(F_i)=n_i$ and $\varepsilon_U(F_i)=n_i-k_i\geq 0$ for $i\in [t]$. In addition, we refer to the $\fqm$-subspaces $F_i$ as the {\bf components} of the decomposition \eqref{def:F_complwe} of $\mathbb{V}(k,q^m)$ and to  the $\fq$-subspaces $U_i$ as the {\bf components} of the decomposition \eqref{def:U_complwe} of $U$.
\end{definition}

In \cite[Definition 2.3]{bartoli2024Exceptional_sequences} the definition of decomposable $\fq$-subspace in $\fqm^k$  is given for $\fq$-subspaces of dimension a multiple of $m$; in  \cite[Definition 3.3 and Remark 3.5]{santonastaso2024completely} the notion of decomposable is refer to a $\fqm$-linear rank metric code and the corresponding system is a decomposable $\fq$-subspace in $\fqm^k$ of type $(\mathbf{k}, \mathbf{n})$ where $\mathbf{k}=(1,1,\dots,1)$; finally  in \cite[Definition 12]{adriaensen2023minimum}, generalizing a notion given in \cite{napolitano2022linear} for projective lines and in \cite{zullo2023multi} for projective spaces of larger dimension, the authors give the definition of projective subspaces of {\it complementary weight} with respect to an $\fq$-linear set $L_U$ in a projective space $\mathrm{PG}(d,q^m)$;  reformulating such definition in purely vector terms, we obtain Definition \ref{def:decomp}.\\

The following property is easily verified.

\begin{proposition} \label{prop:decomposable}
 Let $U$ be in $\mathcal{L}_q(\mathbb{V}(k,q^m))$ with dimension $n$. Suppose that $U$ is decomposable of type $(\mathbf{k}, \mathbf{n})$, with $n=\sum_{i=1}^t n_i$, and let $F_i$ be, for $i\in[t]$, the components of the decomposition  (\ref{def:F_complwe}). Then the  following holds
 \begin{itemize}
    
     \item[1.] If $T$ is in $\mathcal{L}(\mathbb{V}(k,q^m))$ such that $T=\sum_{j=1}^{h}F_{i_j}\oplus T'$ for $h$ distinct components $F_{i_j}$ and $T'\leq_{\fqm} \sum_{l=1,\neq i_j}^{t}F_l$, then 
     \[w_U(T)=\sum_{j=1}^{h}w_U(F_{i_j})+w_U(T') \,\,\, \text{and} \,\,\, \varepsilon_U(T)=\sum_{j=1}^{h}\varepsilon_U(F_{i_j})+\varepsilon_U(T').\,\,\, \]
     \item[2.]  The minimum dimension of a subspace of $\mathbb{V}(k,q^m)$ with defect $n-k$ is $k$ (i.e. $t_s=k$) if and only if the components $F_i$ for every $i\in [t]$ are minimal with respect to its defect.
     \item[3.]  If $t_s=k$, then every subspace $T$ in $\mathcal{L}(\mathbb{V}(k,q^m))$ that is the sum of some components $F_i$  is minimal with respect to its defect.
 \end{itemize}
\end{proposition}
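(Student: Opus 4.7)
The plan is to handle the three parts sequentially, with Part 1 providing the bookkeeping exploited in the other two.

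For Part 1, I would start with the basic alignment $U\cap F_i=U_i$ (and more generally $U\cap\bigoplus_{i\in J}F_i=\bigoplus_{i\in J}U_i$ for any $J\subseteq[t]$), which follows from the uniqueness of decompositions in $V=\bigoplus_i F_i$ together with $U_i\le F_i$. Writing $T=F\oplus T'$ with $F=\bigoplus_{j=1}^{h}F_{i_j}$ and $T'\le\bigoplus_{l\notin\{i_j\}}F_l$, every $u\in U\cap T$ admits two representations, $u=f+t'$ and $u=\sum_{i=1}^t u_i$ with $u_i\in U_i$; matching them coordinate by coordinate in the decomposition $V=\bigoplus_i F_i$ shows that $f\in U\cap F$ and $t'\in U\cap T'$. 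Thus $U\cap T=(U\cap F)\oplus(U\cap T')$, and taking $\fq$-dimensions yields the weight formula; the defect formula follows by subtracting $\dim_{\fqm}T=\sum_j k_{i_j}+\dim_{\fqm}T'$.

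For the forward direction of Part 2, I would argue by contrapositive: if some $F_{i_0}$ is not minimal, there is an $\fqm$-subspace $F'<_{\fqm}F_{i_0}$ with $\varepsilon_U(F')=n_{i_0}-k_{i_0}$ (equality forced by the monotonicity of maximum defects inside $F_{i_0}$), and then $T=F'\oplus\bigoplus_{i\ne i_0}F_i$ fits Part 1 and gives $\varepsilon_U(T)=n-k$ with $\dim_{\fqm}T<k$, contradicting $t_s=k$. For the reverse direction, assume each $F_i$ is minimal and $\varepsilon_U(T)=n-k$; I would apply the subadditivity of defect (\ref{eq: subadd_defect}) to the pair $(T,F_i)$:
\[
\varepsilon_U(T+F_i)\ge\varepsilon_U(T)+\varepsilon_U(F_i)-\varepsilon_U(T\cap F_i).
\]
Since $\varepsilon_U(T+F_i)\le n-k$, this forces $\varepsilon_U(T\cap F_i)\ge n_i-k_i$; the reverse inequality $\varepsilon_U(T\cap F_i)\le n_i-k_i$ holds because $T\cap F_i\le F_i$ and the maximum defect inside $F_i$ with respect to $U_i$ is $n_i-k_i$. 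Equality together with the minimality of $F_i$ forces $T\cap F_i=F_i$, i.e.\ $F_i\le T$; ranging over $i$ yields $T\supseteq\sum_i F_i=V$.

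For Part 3, I would apply Part 2 inside the subspace $T=\bigoplus_j F_{i_j}$ viewed as its own ambient space. By the alignment from Part 1, $U\cap T=\bigoplus_j U_{i_j}$ is decomposable inside $T$ with components $F_{i_j}$, and by Part 1 its total defect equals $\sum_j(n_{i_j}-k_{i_j})=\varepsilon_U(T)$. The hypothesis $t_s=k$ together with Part 2 applied to $V$ ensures each $F_{i_j}$ is minimal, so Part 2 applied inside $T$ shows that the only subspace of $T$ achieving this defect is $T$ itself, which is exactly the minimality of $T$. The main obstacle is the reverse direction of Part 2, where one must simultaneously sandwich $\varepsilon_U(T\cap F_i)$ between two bounds that coincide at $n_i-k_i$ in order to activate the minimality of each $F_i$.
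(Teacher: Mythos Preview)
Your argument is correct in all three parts. Part~1 and the forward direction of Part~2 match the paper's proof essentially verbatim. The genuine differences lie in the reverse direction of Part~2 and in Part~3.

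For the reverse direction of Part~2, the paper argues by contradiction via Theorem~\ref{thm:non degenerate property}: if $t_s<k$ there is a hyperplane $H$ with $w_U(H)=n-1$, and then a dimension count on $H\cap U_i$ and $H\cap F_i$ locates an index $\bar i$ with $\varepsilon_U(H\cap F_{\bar i})=\varepsilon_U(F_{\bar i})$ and $H\cap F_{\bar i}\subsetneq F_{\bar i}$, contradicting minimality. Your route is more self-contained: you bypass the hyperplane characterization entirely and instead sandwich $\varepsilon_U(T\cap F_i)$ between two copies of $n_i-k_i$ using only the subadditivity inequality~\eqref{eq: subadd_defect} and the identification $U\cap F_i=U_i$. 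This is a cleaner argument and does not rely on any earlier structural result.

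For Part~3, the paper takes a putative $T'\le T$ with $\varepsilon_U(T')=\varepsilon_U(T)$, enlarges it to $S=T'+\sum_{l\notin\{i_j\}}F_l$, computes $\varepsilon_U(S)=n-k$ via Part~1, and invokes $t_s=k$ to force $S=\mathbb{V}(k,q^m)$, whence $T'=T$. You instead restrict to $T$ as ambient space, observe that defects with respect to $U$ and to $U\cap T$ agree on subspaces of $T$, and apply your already-proved Part~2 inside $T$. Both arguments are short; yours has the pleasant feature of exhibiting Part~3 as a direct recursion on Part~2, while the paper's has the advantage of reusing the additive formula from Part~1 one more time.
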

\begin{proof}
  {\it 1.} is obtained by noting that under the assigned assumptions it results
 \[\left(\sum_{j=1}^{h} F_{i_j}+T'\right)\cap U=\left(\sum_{j=1}^{h} U_{i_j}\right) \oplus (T'\cap U) \,\, \text{and} \,\, \left(\sum_{j=1}^{h} F_{j_i}\right)\cap T'=\{{\underline 0}\},\]
 hence inequalities (\ref{eq: subadd_defect}) and (\ref{eq: subadd_weight}) become  equalities.\\

 { {\it 2.}}
 Now, suppose that $t_s=k$ and suppose that the component $F_{\bar i}$, for some  $\bar i\in [t]$, is not minimal with respect to its defect. Then there exists $T'<_{\fqm} F_{\bar i}$ such that $\varepsilon_U(T')=\varepsilon_U(F_{\bar i})=n_{\bar i}- k_{\bar i}$. Then by the previous item the subspace $T=\sum_{i\neq \bar i}F_i \oplus T'$ is a proper subspace of $\mathbb{V}(k,q^m)$ such that $\varepsilon_U(T)=n-k$, againist the hypotesis $t_s=k$. On the other hand, assume that every component $F_i$ is minimal with respect to its defect and by way of contradiction suppose that $t_s<k$, then by Theorem \ref{thm:non degenerate property} there exists an hyperplane $H$ of $\mathbb{V}(k,q^m)$ of weight $n-1$, i.e. $U'=H\cap U$ is an hyperplane of $U$ and hence  for each  $ i\in [t]$, we have
 \[n_i-1\leq \dim_{\fq} (H\cap U_{i})=\dim_{\fq} (U'\cap U_{i})\leq n_{i} \,\, \,\,\text{and}\,\, \, k_i-1\leq \dim_{\fqm} (H\cap F_{i})\leq k_{i}.\]
Note that since $F_i=\langle U_i\rangle_{\fqm}$ for $i\in [t]$, then  
\[\dim_{\fq} (H\cap U_{i})=n_i \, \Longrightarrow \,  \dim_{\fqm} (H\cap F_{i})=k_i,\] and
\[\dim_{\fqm} (H\cap F_{i})=k_i-1 \, \Longrightarrow \,\, \dim_{\fq} (H\cap U_{i})=n_i-1.\] Now,  since $H$ is an hyperplane, there exists $\bar i\in [t]$ such that $\dim_{\fqm} (H\cap F_{\bar i})=k_{\bar i}-1$ and hence $\dim_{\fq} (H\cap U_{\bar i})=n_{\bar i}-1$, and this means that
\[\varepsilon_U(H\cap F_{\bar i})= n_{\bar i}-1-(k_{\bar i}-1)=\varepsilon_U(F_{\bar i})\]

which contradicts the minimality of $F_{\bar i}$ with respect to its defect.\\

{{\it 3.}} Finally, first of all note that since $t_s=k$ the whole space $\mathbb{V}(k,q^m)$ is the unique subspace having defect $n-k$.   Assume that $T\leq_{\fqm} \mathbb{V}(k,q^m)$ is the sum of $h\geq 1$ components, i.e. $T=\sum_{j=1}^h F_{i_j}$ where $ i_j\in [t]$ and note that $\varepsilon_U(T)=\sum_{j=1}^h(n_{i_j}-k_{i_j})$ by item {\it 1.}. 
 Let $T'$ be an $\fqm$-subspace of $T$ such that $\varepsilon_U(T')=\varepsilon_U(T)$. Note that by $(2)$ of Property \ref{property:defect} we may assume that $T'=\langle T'\cap U\rangle_{\fqm}$. Let $S=T'+(\sum_{ l\neq i_j}^{} F_l)$, then by item {\it 1.}  we get
\[\varepsilon_U(S)=\sum_{l\neq i_j}^{}\varepsilon_U(F_{l})+\varepsilon_U(T')=\sum_{l\neq i_j}^{}(n_{l}-k_l)+\varepsilon_U(T)=\sum_{i=1}^{t}(n_i-k_i)=n-k.\]

If $T'$ is a proper subspace of $T$, then $S$ is a proper subspace of  $\mathbb{V}(k,q^m)$ of defect $n-k$, a contradiction. Hence $T'=T$, i.e. $T$ is minimal with respect to its defect.

\end{proof}

Another evasive-like property, regarding decomposable subspaces, has been studied in connection with the problem of representability of $q$-matroids; see \cite{alfarano2024representability}.

We can now give the following definition, by considering the direct sum of subspaces which are scattered with respect to the hyperplanes.

\begin{definition} \label{def:kbold-scattered}
    Let $\mathbf{k}=(k_1,\ldots,k_t)\in\mathbb N^t$ and let $k=k_1+\ldots+k_t$. We say that $U$ is \textbf{$\mathbf{k}$-scattered with respect to the hyperplanes} in $\mathbb{V}(k,q^m)$ of type $\mathbf{n}=(n_1,\dots,n_t)$ if 
    \begin{itemize}
        \item[1.] $U$ is decomposable of type $(\mathbf{k}, \mathbf{n})$;
        \item[2.]  the component $U_i$ of the decomposition \eqref{def:U_complwe} is a  scattered subspace with respect to the hyperplanes in $F_i=\langle U_i\rangle_{\fqm}$ and $n_i > k_i$ for any $i\in [t]$;
        \item[3.]  $\dim_{\fq}(U\cap H)\leq k-1 \, \mbox{for each hyperplane}  \,\,   H\mbox{ such that }  F_i\not\subseteq H \mbox{ for } i\in [t],$
        or equivalently
         \[
 \varepsilon_U(H)\leq 0 \, \mbox{for each hyperplane}  \,\,   H \mbox{ such that }  F_i\not\subseteq H \mbox{ for } i\in [t].
\]
\end{itemize} 
\end{definition}

As proved in \cite[Lemma 2.3]{alfarano2024representability} Condition $3.$ of the previous definition can be extended to any subspace that does not contain any component of the decomposition. 

\begin{proposition} \label{prop:nonpositive_defect}
  Let $U$ be  a $\mathbf{k}$-scattered subspace with respect to the hyperplanes in $\mathbb{V}(k,q^m)$ with components $F_i$ for $i\in [t]$. Then 
  \[\varepsilon_U(T)\leq 0 \,  \mbox{ for each subspace }  \,\,   T \mbox{ of dimension at most k-1 such that }  F_i\not\subseteq T \mbox{ for } i\in [t].\]
\end{proposition}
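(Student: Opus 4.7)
The plan is to argue by contradiction: assume there exists $T \in \mathcal{L}(\mathbb{V}(k,q^m))$ with $\dim_{\mathbb{F}_{q^m}}(T) \leq k-1$, $F_i \not\subseteq T$ for all $i \in [t]$, and $\varepsilon_U(T) > 0$. The goal is to construct a chain of $\mathbb{F}_{q^m}$-subspaces $T = T_0 \subsetneq T_1 \subsetneq \cdots \subsetneq T_s = H$, each obtained from the previous by adding one $\mathbb{F}_{q^m}$-dimension, such that $F_i \not\subseteq T_\ell$ and $\varepsilon_U(T_\ell) \geq \varepsilon_U(T) > 0$ for every $\ell$, culminating in a hyperplane $H$. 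This immediately contradicts Condition 3 of Definition \ref{def:kbold-scattered}.

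The inductive extension step proceeds as follows: given $T_\ell$ with the stated properties and $\dim T_\ell \leq k-2$, one picks $u \in U \setminus T_\ell$ and sets $T_{\ell+1} = T_\ell + \langle u\rangle_{\mathbb{F}_{q^m}}$. Since $T_\ell \cap \langle u\rangle_{\mathbb{F}_{q^m}} = \{\underline 0\}$ (as $u \notin T_\ell$) and $\varepsilon_U(\langle u\rangle_{\mathbb{F}_{q^m}}) \geq 0$ (as $u \in U$), inequality \eqref{eq: subadd_defect} yields $\varepsilon_U(T_{\ell+1}) \geq \varepsilon_U(T_\ell)$. The only constraint on $u$ is that $F_i \not\subseteq T_{\ell+1}$ for every $i$: indeed, $F_i \subseteq T_{\ell+1}$ can occur only when $\dim_{\mathbb{F}_{q^m}}(F_i \cap T_\ell) = k_i - 1$, in which case $u$ must lie in $(F_i + T_\ell) \setminus T_\ell$. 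Letting $I_\ell = \{ i \in [t] : \dim_{\mathbb{F}_{q^m}}(F_i \cap T_\ell) = k_i - 1\}$, the extension succeeds precisely when $U \not\subseteq T_\ell \cup \bigcup_{i \in I_\ell}(F_i + T_\ell)$.

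The main obstacle is therefore to rule out the covering $U \subseteq T_\ell \cup \bigcup_{i \in I_\ell}(F_i + T_\ell)$. The natural way to address this is to pass to the quotient $\pi : \mathbb{V}(k,q^m) \to \mathbb{V}(k,q^m)/T_\ell$: the image $\pi(U)$ is an $\mathbb{F}_q$-subspace whose $\mathbb{F}_{q^m}$-span is the full quotient (since $\langle U\rangle_{\mathbb{F}_{q^m}} = \mathbb{V}(k,q^m)$), while each $\pi(F_i)$ for $i \in I_\ell$ is a $1$-dimensional $\mathbb{F}_{q^m}$-subspace, and the covering assumption reads $\pi(U) \subseteq \bigcup_{i \in I_\ell}\pi(F_i)$. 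To produce a contradiction, the plan is to exploit the decomposition $U = U_1 \oplus \cdots \oplus U_t$ together with the bound $\dim_{\mathbb{F}_q}(U_j) \leq m$, which follows from each $U_j$ being scattered with respect to the hyperplanes in $F_j$ (cf.\ \cite[Theorem 2.3]{csajbok2021generalising}), combined with the identity $\dim_{\mathbb{F}_q}(\pi(F_i) \cap \pi(U)) = \dim_{\mathbb{F}_q}((F_i + T_\ell) \cap U) - w_U(T_\ell)$ obtained from the exact sequence induced by $\pi$. Carrying this dimension count through — the hardest step, which mirrors the argument of \cite[Lemma 2.3]{alfarano2024representability} — yields that such a covering is impossible, so a valid extension $u$ always exists, and the chain can be built up to a hyperplane $H$ that contradicts Condition 3.
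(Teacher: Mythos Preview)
The paper does not give its own proof of this proposition; it simply records that the statement is \cite[Lemma~2.3]{alfarano2024representability} and cites that reference. Your proposal is therefore not competing against an argument in the paper but against an external one, and in fact you defer the decisive step to the very same source. In that sense the two are aligned: both rest on the cited lemma.

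Your chain-extension outline is a plausible reconstruction of how such a proof could be organized --- reducing the claim for arbitrary $T$ to Condition~3 of Definition~\ref{def:kbold-scattered} by growing $T$ one $\F_{q^m}$-dimension at a time through $U$, keeping the defect nondecreasing via \eqref{eq: subadd_defect}, and avoiding each $F_i$. The identity $\dim_{\fq}(\pi(F_i)\cap\pi(U))=\dim_{\fq}((F_i+T_\ell)\cap U)-w_U(T_\ell)$ is correct, and your identification of the covering obstruction $U\subseteq T_\ell\cup\bigcup_{i\in I_\ell}(F_i+T_\ell)$ as the crux is accurate. One caution: the ingredients you list for the dimension count (the bound $n_j\le m$ and the decomposition $U=\bigoplus U_j$) do not by themselves rule out the covering without real work --- an $\fq$-subspace can be covered by as few as $q+1$ one-dimensional $\fqm$-subspaces, so the argument genuinely needs the scatteredness of each $U_j$ in $F_j$, not merely a cardinality bound. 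Since you explicitly hand this step off to \cite[Lemma~2.3]{alfarano2024representability}, the proposal is fine as a sketch, but the substance remains in the citation, exactly as in the paper.
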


It is possible to prove the following characterization of decomposable subspaces of type $(\mathbf{k}, \mathbf{n})$ that are also $\mathbf{k}$-scattered.

\begin{theorem} \label{thm:caracK-scattered}
Let $U$ be a decomposable subspace of type $(\mathbf{k}, \mathbf{n})$ in $\mathbb{V}(k,q^m)$, where $\mathbf{k}=(k_1,\ldots,k_t)$, $\mathbf{n}=(n_1,\ldots,n_t)\in\mathbb N^t$ with $k=k_1+\ldots+k_t$, $n=n_1+\ldots+n_t$ and $k_i<n_i$ for $i\in [t]$ and let $U_i$ be for $i\in [t]$ the components of its decomposition.
   The subspace $U$ is $\mathbf{k}$-scattered with respect to the hyperplanes if and only if the only subspaces of $\mathbb{V}(k,q^m)$ with positive defect with respect to $U$ and minimal with respect to its defect are the subspaces which are  sum of some components $F_i=\langle U_i\rangle_{\fqm}$ for $i \in \{1,\ldots,t\}$.
\end{theorem}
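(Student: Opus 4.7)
The plan is to argue by double implication, exploiting two structural tools already available from the previous section: the additivity formula for the defect on sums of components (item 1 of Proposition \ref{prop:decomposable}) and the nonpositivity of the defect on subspaces avoiding every component (Proposition \ref{prop:nonpositive_defect}).

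For the forward direction, I would assume $U$ is $\mathbf{k}$-scattered with respect to the hyperplanes and take a subspace $T$ that is minimal with respect to its defect with $\varepsilon_U(T)>0$. If $T=\mathbb{V}(k,q^m)$, then $T=F_1+\cdots+F_t$ is already a sum of components, so I may assume $\dim_{\fqm}(T)\le k-1$; Proposition \ref{prop:nonpositive_defect} then forces $F_i\subseteq T$ for at least one $i$. Letting $I=\{i\in[t]\,:\,F_i\subseteq T\}$ and $F_I=\sum_{i\in I}F_i$, the modular law gives $T=F_I\oplus T_0$ with $T_0=T\cap\bigl(\sum_{j\notin I}F_j\bigr)$. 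By construction $T_0$ contains no component and has $\fqm$-dimension at most $k-1$, so Proposition \ref{prop:nonpositive_defect} yields $\varepsilon_U(T_0)\le 0$. Applying item 1 of Proposition \ref{prop:decomposable} to the decomposition $T=F_I+T_0$, I get $\varepsilon_U(T)=\varepsilon_U(F_I)+\varepsilon_U(T_0)\le\varepsilon_U(F_I)$; should $T_0\ne\{\underline 0\}$, then $F_I\subsetneq T$ would satisfy $\varepsilon_U(F_I)\ge\varepsilon_U(T)$, contradicting minimality. Hence $T=F_I$.

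For the converse, I would assume every minimal positive-defect subspace has the form $F_J=\sum_{j\in J}F_j$ and verify conditions 2 and 3 of Definition \ref{def:kbold-scattered} (condition 1 being given). The key device is that for any $S$ with $\varepsilon_U(S)>0$, a subspace $T'\subseteq S$ of minimum $\fqm$-dimension among positive-defect subspaces is automatically minimal with respect to its defect, since any $T''\subsetneq T'$ must satisfy $\varepsilon_U(T'')\le 0<\varepsilon_U(T')$. If some hyperplane $H_i$ of $F_i$ had $\varepsilon_{U_i}(H_i)>0$, applying this device inside $S=H_i\subseteq F_i$ would produce a minimal $T'\subseteq F_i$ equal to some $F_J$, but $F_j\cap F_i=\{\underline 0\}$ for $j\ne i$ forces $J\subseteq\{i\}$, and $T'\subseteq H_i\subsetneq F_i$ then rules out $T'=F_i$, a contradiction. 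For condition 3, a hyperplane $H$ of $\mathbb{V}(k,q^m)$ containing no $F_i$ with $\varepsilon_U(H)>0$ would yield a minimal $T''\subseteq H$ equal to $F_J$, but $F_j\not\subseteq H$ for every $j$ forces $J=\emptyset$ and $T''=\{\underline 0\}$, contradicting positive defect.

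The proof itself is short once the right viewpoint is adopted; the main obstacle is recognizing that the two heterogeneous conditions defining $\mathbf{k}$-scatteredness with respect to the hyperplanes — internal scatteredness of each $U_i$ in $F_i$, and the global hyperplane-avoidance condition — together admit a single uniform characterization through minimal positive-defect subspaces. Once the dimension-minimization device is in place and combined with the additivity formula of Proposition \ref{prop:decomposable}, both directions reduce to a short case analysis.
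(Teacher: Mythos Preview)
Your proof is correct and follows essentially the same approach as the paper: both directions hinge on Proposition \ref{prop:decomposable} (additivity of the defect on component-respecting decompositions) and Proposition \ref{prop:nonpositive_defect}, and your converse is virtually identical to the paper's. Your forward direction is in fact more explicit than the paper's, which merely writes that the conclusion ``follows from Proposition \ref{prop:nonpositive_defect}'' without spelling out the modular-law decomposition $T=F_I\oplus T_0$ that you use to rule out a nontrivial remainder. Conversely, the paper also verifies (via items 2 and 3 of Proposition \ref{prop:decomposable}, after condition 2 of Definition \ref{def:kbold-scattered} gives minimality of each $F_i$ and hence $t_s=k$) that every sum of components is itself a minimal positive-defect subspace; you omit this, so if the statement is read as an equality of sets rather than an inclusion, you should add a line to cover it.
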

\begin{proof}
   Assume that $U$ is $\mathbf{k}$-scattered with respect to the hyperplanes. Note that from $2.$ of the Definition \ref{def:kbold-scattered} we get that each component $F_i$ of the decomposition is minimal with respect to its defect, and hence by $2.$ and $3.$ of Property \ref{prop:decomposable} we get that  $t_s=k$ and that  every subspace $T$ of $\mathbb{V}(k,q^m)$ that is the sum of some components $F_i$  is minimal with respect to its defect. Finally, from Proposition \ref{prop:nonpositive_defect} follows the assertion.
   On the other hand, assume that the only subspaces of $\mathbb{V}(k,q^m)$ with positive defect with respect to $U$ and minimal with respect to its defect are the subspaces which are  sum of some components $F_i$. This implies that any subspace with positive defect contains at least a subspace $F_i$ for some $i \in \{1,\ldots,t\}$. This implies that  $3.$ of Definition \ref{def:kbold-scattered} is verified and  we also get that  every proper subspace $L$ of $F_i$ is such that $\varepsilon_U(L)\leq 0$ and hence, by Property \ref{prop:(h,r)-evasive-defect},  $2.$ of Definition \ref{def:kbold-scattered} is satisfied. 
\end{proof}

\begin{remark}
    Although the definition \ref{def:kbold-scattered} could be generalized, we focus exclusively on this version in light of the scope of the paper.
\end{remark}

\subsection{Delsarte dual of an $\fq$-subspace}

In this section, we will revisit the notion of {\it Delsarte dual} of an $\fq$-subspace introduced in \cite{csajbok2021generalising}. In particular, in Theorem \ref{thm: dual defect}, we describe how subspaces of positive defect with respect to an $\fq$-subspace $U$ correspond to subspaces with positive defects with respect to its Delsarte dual $U^d$. In Theorem \ref{thm: dual maximum defects}, we show the exact relationship between the sequence of maximum non-zero defects with respect to a subspace $U$ and the sequence of maximum non-zero defects of its Delsarte dual $U^d$.\\

We start by recalling that every $\fq$-subspace of a $k$-dimensional $\fqm$-vector space $\mathbb{V}(k,q^m)$ that generates the entire vector space is either a subgeometry or can be obtained as a projection of a subgeometry of a larger dimension vector space. This was proved in \cite{lunardon2004translation}, where the result  is formulated in projective terms. Below is the vector-version of the same result.

  \begin{theorem}(\cite[Theorem 2]{lunardon2004translation})\label{thm:projection_subg}
    Let $U$ be in $\mathcal{L}_q(\mathbb{V}(k,q^m))$ of dimension $n>k$ such that $\langle U \rangle_{\fqm}=\mathbb{V}(k,q^m)$. Then there exist:
    \begin{itemize}
        \item[1.]  an $n$-dimensional $\fq$-subgeometry $W$ of an $n$-dimensional $\fqm$-vector space $\mathbb{V}(n,q^m)$,
        \item[2.] an $(n-k)$-dimensional $\fqm$-subspace $\Gamma$ of $\mathbb{V}(n,q^m)$ such that $\Gamma \cap W=\{ \underline 0\}$,
        \item[3.] an invertible $\fqm$-linear map 
    \[\Phi \,:\, \mathbb{V}(k,q^m) \longrightarrow \frac{\mathbb{V}(n,q^m)}{\Gamma},\]
     \end{itemize}
    
    such that $\Phi (U)=\frac{W+\Gamma}{\Gamma}$.
\end{theorem}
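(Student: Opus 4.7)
The plan is to construct everything explicitly from an $\fq$-basis of $U$. First, I would fix an $\fq$-basis $u_1,\ldots,u_n$ of $U$ (which exists since $\dim_{\fq}(U)=n$). I would then take $\mathbb{V}(n,q^m)$ to be $\fqm^n$ with canonical basis $e_1,\ldots,e_n$, and set $W=\langle e_1,\ldots,e_n\rangle_{\fq}=\fq^n$. By construction, $\dim_{\fq}(W)=n=\dim_{\fqm}(\langle W\rangle_{\fqm})$, so $W$ is an $n$-dimensional $\fq$-subgeometry of $\mathbb{V}(n,q^m)$, giving item 1.

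Next, I would define an $\fqm$-linear map
\[
\psi:\mathbb{V}(n,q^m)\longrightarrow \mathbb{V}(k,q^m),\qquad e_i\longmapsto u_i,
\]
extended $\fqm$-linearly. Because $\langle U\rangle_{\fqm}=\mathbb{V}(k,q^m)$, the image $\psi(\mathbb{V}(n,q^m))$ contains the full ambient space, so $\psi$ is surjective. Setting $\Gamma=\ker\psi$, the rank-nullity theorem gives $\dim_{\fqm}(\Gamma)=n-k$, which yields item 2 provided I check $\Gamma\cap W=\{\underline 0\}$. But an element of $W$ has the form $\sum_{i=1}^n a_i e_i$ with $a_i\in\fq$, and its image $\sum_{i=1}^n a_i u_i\in U$ vanishes only when all $a_i=0$, because $u_1,\ldots,u_n$ is an $\fq$-basis of $U$. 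Hence $\Gamma\cap W=\{\underline 0\}$.

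Finally, the first isomorphism theorem gives an $\fqm$-linear isomorphism
\[
\overline{\psi}:\frac{\mathbb{V}(n,q^m)}{\Gamma}\longrightarrow \mathbb{V}(k,q^m),\qquad v+\Gamma\longmapsto \psi(v).
\]
Setting $\Phi=\overline{\psi}^{\,-1}$ produces the invertible $\fqm$-linear map required in item 3. To conclude, I would verify the image relation: since $\psi(W)=\langle u_1,\ldots,u_n\rangle_{\fq}=U$ and $W\cap\Gamma=\{\underline 0\}$, we have $\overline{\psi}\bigl((W+\Gamma)/\Gamma\bigr)=\psi(W)=U$, and therefore $\Phi(U)=(W+\Gamma)/\Gamma$.

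There is essentially no combinatorial obstacle in this argument; the whole statement reduces to an explicit coordinate construction. The only point deserving attention is checking $\Gamma\cap W=\{\underline 0\}$, and this is precisely where the $\fq$-linear independence of the chosen basis $u_1,\ldots,u_n$ is used. Everything else is a standard application of the first isomorphism theorem together with the hypothesis that $U$ spans $\mathbb{V}(k,q^m)$ over $\fqm$.
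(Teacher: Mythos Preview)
Your argument is correct. The paper does not supply its own proof of this theorem but simply quotes it from \cite{lunardon2004translation}; your explicit coordinate construction via an $\fq$-basis of $U$ and the first isomorphism theorem is the natural direct argument and needs no further justification.
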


In the next proposition, we will give an interpretation of the weight and defect of a subspace using the quotient space model described in the previous theorem.

\begin{proposition} \label{prop:geometricdefect}
    Let $\mathbb{V}(n,q^m)$ be an $n$-dimensional $\fqm$-vector space, let $W$ be an $n$-dimensional $\fq$-subgeometry  of $\mathbb{V}(n,q^m)$  and let $\Gamma$ be an $(n-k)$-dimensional $\fqm$-subspace of $\mathbb{V}(n,q^m)$ such that  
      $\Gamma \cap W=\{ 0\}$. Let $U:=\frac{W+\Gamma}{\Gamma}$ be the $n$-dimensional  $\fq$-subspace of $\mathbb V(k,q^m)=\frac{\mathbb{V}(n,q^m)}{\Gamma}$ described in the previous theorem.
      Then for any $\fqm$-subspace $T$ of $\mathbb V(k,q^m)$ with $w_{U}(T)=h$ and $\langle T \cap U \rangle_{\fqm}=T$ the following hold:
      \begin{itemize}
          \item[\bf 1.] $T=\frac{S_h^*+\Gamma}{\Gamma}$ where $S_h$ is an $h$-dimensional $\fq$-subspace of $W$ and $S_h^*$ is the extension of $S_h$ (defined in $(3)$ of Property \ref{prop:subgeometry}). In particular, $\mathbb V(k,q^m)=\frac{W^*+\Gamma}{\Gamma}$;
          \item[\bf 2.] $\varepsilon_{U}(T)=\dim_{\fqm}(S_h^*\cap \Gamma)$;
          \item[\bf 3.]  If $T'$ is an $\fqm$-subspace of $\mathbb{V}(k,q^m)$ such that $T'\subseteq T$, $\langle T' \cap U \rangle_{\fqm}=T'$ and $T'=\frac{S_{h'}^*+\Gamma}{\Gamma}$ as in $\mathbf 1.$ (where $S_{h'}$ is an $\fq$-subspace of $W$), then $S_{h'} \subseteq S_h$.
      \end{itemize}
\end{proposition}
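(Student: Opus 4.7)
The strategy is to exploit the fact that the quotient map $\pi\colon \mathbb{V}(n,q^m) \to \mathbb{V}(n,q^m)/\Gamma = \mathbb V(k,q^m)$ restricts to an $\fq$-linear isomorphism between $W$ and $U$; indeed, $W\cap \Gamma=\{\underline 0\}$ makes $\pi|_W$ injective, and its image is precisely $(W+\Gamma)/\Gamma=U$. The plan is to pull $T\cap U$ back to an $\fq$-subspace $S_h$ of $W$ via this isomorphism, extend it uniquely to an $\fqm$-subspace $S_h^*$ of $\mathbb{V}(n,q^m)$ using the subgeometry property of $W$, and then verify that $\pi(S_h^*)=T$.

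For item \textbf{1}, I set $S_h := (\pi|_W)^{-1}(T\cap U)$, which is an $h$-dimensional $\fq$-subspace of $W$. By item~3 of Property~\ref{prop:subgeometry}, applied to the subgeometry $W$ inside $\mathbb{V}(n,q^m)$, there is a unique $h$-dimensional $\fqm$-subspace $S_h^*$ of $\mathbb{V}(n,q^m)$ with $S_h=W\cap S_h^*$. The key intermediate identity I would establish is
\[ S_h^* = \langle S_h\rangle_{\fqm}. \]
This holds because $\langle S_h\rangle_{\fqm}$ has $\fqm$-dimension at most $h$, while the subgeometry property of $W$ gives $\dim_{\fq}(\langle S_h\rangle_{\fqm}\cap W)\leq \dim_{\fqm}(\langle S_h\rangle_{\fqm})\leq h$, and $S_h\subseteq \langle S_h\rangle_{\fqm}\cap W$ already has $\fq$-dimension exactly $h$; hence all these dimensions equal $h$ and uniqueness of the extension identifies $\langle S_h\rangle_{\fqm}$ with $S_h^*$. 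Now, using the hypothesis $T=\langle T\cap U\rangle_{\fqm}$ and the $\fqm$-linearity of $\pi$,
\[ T = \langle \pi(S_h)\rangle_{\fqm} = \pi\bigl(\langle S_h\rangle_{\fqm}\bigr) = \pi(S_h^*) = \frac{S_h^*+\Gamma}{\Gamma}. \]
Taking $T=\mathbb V(k,q^m)$ (so $h=n$, $S_h=W$, and $W^*=\mathbb{V}(n,q^m)$) yields the ``in particular'' statement.

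Item \textbf{2} then follows from a straightforward dimension count, using $S_h^*\cap\Gamma$ as the kernel of $\pi|_{S_h^*}$:
\[ \dim_{\fqm}(T) = \dim_{\fqm}\bigl(\pi(S_h^*)\bigr) = h - \dim_{\fqm}(S_h^*\cap \Gamma), \]
so $\varepsilon_U(T) = w_U(T) - \dim_{\fqm}(T) = h-\bigl(h-\dim_{\fqm}(S_h^*\cap\Gamma)\bigr) = \dim_{\fqm}(S_h^*\cap \Gamma)$. For item \textbf{3}, the inclusion $T'\subseteq T$ implies $T'\cap U \subseteq T\cap U$, and applying the inverse of the $\fq$-linear bijection $\pi|_W$ transports this inclusion to $S_{h'}\subseteq S_h$.

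The only genuinely non-routine step is the identification $S_h^* = \langle S_h\rangle_{\fqm}$, which is the bridge between the abstract ``extension'' notation of Property~\ref{prop:subgeometry}(3) and the concrete $\fqm$-span that arises when pushing $S_h$ through the quotient. Once this identity is in place, the remaining content of items~\textbf{1}--\textbf{3} reduces to $\fqm$-linearity of $\pi$ and elementary dimension counting.
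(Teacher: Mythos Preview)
Your proof is correct and follows essentially the same approach as the paper's. The only difference is one of explicitness: the paper passes directly from $\langle T\cap U\rangle_{\fqm}=\frac{\langle S_h\rangle_{\fqm}+\Gamma}{\Gamma}$ to $T=\frac{S_h^*+\Gamma}{\Gamma}$ without isolating the identity $S_h^*=\langle S_h\rangle_{\fqm}$, whereas you spell out and justify this identification; for item~\textbf{3} the paper intersects the quotients with $U$ and then lifts back to $W$, which is exactly your use of the inverse of $\pi|_W$ written out in quotient notation.
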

\begin{proof}

 Recall that $h=w_{U}(T)=\dim_{\fq} ( T \cap U)$.  Then, since $\Gamma \cap W=\{ \underline 0\}$, there exists a $h$-dimensional $\fq$-subspace $S_h$ of $W$ such that $T \cap U=\frac{S_h+\Gamma}{\Gamma}$. Hence $\langle T \cap U \rangle_{\fqm}= \frac{\langle S_h \rangle_{\fqm}+\Gamma}{\Gamma}$ and so $T=\frac{S_h^*+\Gamma}{\Gamma}$. This proves \textbf{1.}. Moreover,
 \[\varepsilon_{U}(T)=h-\dim_{\fqm} (T)=h-(\dim_{\fqm} (S_h^*) - \dim_{\fqm} (S_h^*\cap \Gamma))=\dim_{\fqm} (S_h^*\cap \Gamma), \]
 hence \textbf{2.} is proved.
 Finally, from $T'\subseteq T$, we get $\frac{S_{h'}^*+\Gamma}{\Gamma} \subseteq \frac{S_{h}^*+\Gamma}{\Gamma}$ and hence
 \[\frac{W+\Gamma}{\Gamma} \cap \frac{S_{h'}^*+\Gamma}{\Gamma} \subseteq \frac{W+\Gamma}{\Gamma}\cap \frac{S_{h}^*+\Gamma}{\Gamma},\]
 implying also the following $\frac{S_{h'}+\Gamma}{\Gamma} \subseteq \frac{S_{h}+\Gamma}{\Gamma}$. Since $W\cap \Gamma=\{ 0\}$, we get $S_{h'} \subseteq S_h$. 
\end{proof}

In \cite{csajbok2021generalising}, using the quotient model described above, a duality was introduced which allows us to associate to each $n$-dimensional $\fq$-subspace $U$ of a $k$-dimensional $\fqm$-vector space $\mathbb{V}(k,q^m)$  an $\fq$-subspace $U^d$ in an $(n-k)$-dimensional  $\fqm$-vector space $\mathbb{V}(n-k,q^m)$. When $n=m$ this construction corresponds to the one described in \cite[Theorem 4.6]{sheekeyVdV}  (see also \cite{lunardon2017mrd}).
We will call this operation \emph{Delsarte duality}. In order to define it, we need the following.

Let $\mathbb{V}(n,q^m)$ be an $n$-dimensional $\fqm$-vector space and let $W$ be an $n$-dimensional $\fq$-subgeometry  of $\mathbb{V}(n,q^m)$.   Let $\sigma': W\times W\longrightarrow \fq$ be a non-degenerate
reflexive bilinear form. There exists a unique non-degenerate reflexive bilinear form  $\sigma: \mathbb{V}(n,q^m)\times \mathbb{V}(n,q^m)\longrightarrow \fqm$ such that $\sigma (\underline{v}, \underline{u} )= \sigma' (\underline{v}, \underline{u})$ whenever $\underline{v}, \underline{u} \in W$, i.e. $\sigma$ is the $\fqm$-extension of $\sigma'$ over $\mathbb{V}(n,q^m)$.  Let $\perp$ and $\perp'$ be the orthogonal complement maps
defined by $\sigma$ and $\sigma'$ on the lattices of the
$\F_{q^m}$-subspaces of $\mathbb{V}(n,q^m)$ and the $\F_{q}$-subspaces of $W$, respectively.  It is easy to
see that the operators $\perp$, $\perp'$ and $^*$ commute, i.e. 
\begin{equation}\label{eq:*perp}
    (S^{\perp'})^*=(S^*)^{\perp}
\end{equation}
for each $\F_{q}$-subspace $S$ of
$W$ (for more details see \cite[Section 3]{csajbok2021generalising}).

Let $\Gamma$ be an $(n-k)$-dimensional $\fqm$-subspace of $\mathbb{V}(n,q^m)$ such that $\Gamma \cap W=\{ \underline 0\}$ and let $\Gamma^\perp$ be the orthogonal complement of $\Gamma$ with respect to the form $\sigma$. We have that $\Gamma^\perp$ is a $k$-dimensional $\fqm$-subspace of $\mathbb{V}(n,q^m)$.  
This means that we can project $W$ from both $\Gamma$ and $\Gamma^\perp$ using the two project mappings $p_{\Gamma}$ and $p_{\Gamma^\perp}$.

\begin{definition}
Let $U=p_{\Gamma}(W)=\frac{W+\Gamma}{\Gamma}$ be the $\fq$-subspace of $\mathbb{V}(k,q^m)=\frac{\mathbb{V}(n,q^m)}{\Gamma}$ obtained by projecting in $\mathbb{V}(k,q^m)$ the $\fq$-subgeometry  $W$. The following $\fq$-subspace of  $\mathbb{V}(n-k,q^m)=\frac{\mathbb{V}(n,q^m)}{\Gamma^\perp}$ 
\begin{equation} \label{Delsarte dual}
U^d=p_{\Gamma^\perp}(W)=\frac{W+\Gamma^\perp}{\Gamma^\perp}
\end{equation}
 is called a \textbf{Delsarte dual} of $U$ (see \cite[Definition 3.2]{csajbok2021generalising}).
\end{definition}

The following result generalizes \cite[Proposition 3.1]{csajbok2021generalising}.

\begin{proposition} \label{prop:Gamma_perp} 
Let $U=p_{\Gamma}(W)$ be the $\fq$-subspace of $\mathbb{V}(k,q^m)=\frac{\mathbb{V}(n,q^m)}{\Gamma}$ obtained by projecting in $\mathbb{V}(k,q^m)$ the $\fq$-subgeometry  $W$ where $W\cap \Gamma=\{\underline 0\}$.
We have
\begin{equation}\label{eq:dim(Ud)}
\dim_{\fq}(U^d)=n-(k-t_s),
\end{equation}
where $t_s$ is the minimum dimension of an $\F_{q^m}$-subspace of $\mathbb{V}(k,q^m)$ having defect $n-k$ with respect to $U$. Moreover, the following are equivalent:
\begin{itemize}
    \item[1.]  $\dim_{\fq} (U^d)=n$;
    \item[2.] $\Gamma^\perp \cap W=\{\underline 0\}$;
    \item[3.] $w_{U}(H)< n-1$ for each hyperplane $H$ of $\mathbb{V}(n-k,q^m)$. 
\end{itemize}
Furthermore, if any of conditions  $1.$, $2.$ or $3.$ holds, then $(U^d)^d=U$.
\end{proposition}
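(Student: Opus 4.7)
The plan is to compute $\dim_{\fq}(U^d)$ directly from the Grassmann formula and then translate the resulting intersection dimension into a statement about defects with respect to $U$, combining the commutation identity~\eqref{eq:*perp} with the geometric description of defects given by Proposition~\ref{prop:geometricdefect}.

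Since $\dim_{\fq}(\Gamma^\perp)=km$ and $\dim_{\fq}(W)=n$, the Grassmann formula applied inside $\mathbb{V}(n,q^m)$ yields
\[
\dim_{\fq}(U^d)=\dim_{\fq}(W+\Gamma^\perp)-\dim_{\fq}(\Gamma^\perp)=n-\dim_{\fq}(W\cap\Gamma^\perp),
\]
so \eqref{eq:dim(Ud)} reduces to showing $\dim_{\fq}(W\cap\Gamma^\perp)=k-t_s$. To do this, I would set up the correspondence $S\mapsto L:=S^{\perp'}$ between $\fq$-subspaces of $W$, which satisfies $\dim_{\fq}(S)+\dim_{\fq}(L)=n$. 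By \eqref{eq:*perp}, $S^*\subseteq\Gamma^\perp$ if and only if $\Gamma\subseteq (S^*)^\perp=(S^{\perp'})^*=L^*$. Since $\Gamma\cap W=\{\underline 0\}$, an $\fq$-subspace $S\subseteq W$ lies in $\Gamma^\perp$ exactly when its $\fqm$-extension $S^*$ does, so the $\fq$-subspaces $S\subseteq W\cap\Gamma^\perp$ correspond bijectively to the $\fq$-subspaces $L\subseteq W$ for which $\Gamma\subseteq L^*$.

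By $\mathbf{2.}$ of Proposition~\ref{prop:geometricdefect}, the condition $\Gamma\subseteq L^*$ is equivalent to saying that $T:=(L^*+\Gamma)/\Gamma$ is an $\fqm$-subspace of $\mathbb{V}(k,q^m)$ with $\langle T\cap U\rangle_{\fqm}=T$ and $\varepsilon_U(T)=\dim_{\fqm}(L^*\cap\Gamma)=n-k$; moreover,
\[
\dim_{\fqm}(T)=\dim_{\fqm}(L^*)-\dim_{\fqm}(L^*\cap\Gamma)=\dim_{\fq}(L)-(n-k),
\]
so $\dim_{\fq}(S)+\dim_{\fqm}(T)=k$. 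By $2.$ of Property~\ref{property:defect}, the minimum dimension $t_s$ of an $\fqm$-subspace of $\mathbb{V}(k,q^m)$ of defect $n-k$ is actually attained on a subspace satisfying $\langle T\cap U\rangle_{\fqm}=T$. Taking $S=W\cap\Gamma^\perp$ itself (the maximal $S$ in the bijection) therefore gives $\dim_{\fq}(W\cap\Gamma^\perp)=k-t_s$, which proves \eqref{eq:dim(Ud)}.

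The equivalences follow at once: $(1)\Leftrightarrow(2)$ is immediate from \eqref{eq:dim(Ud)}, since $\dim_{\fq}(U^d)=n$ iff $\dim_{\fq}(W\cap\Gamma^\perp)=0$; and $(1)\Leftrightarrow(3)$ follows because \eqref{eq:dim(Ud)} gives $\dim_{\fq}(U^d)=n$ iff $t_s=k$, which by Theorem~\ref{thm:non degenerate property} is equivalent to $w_U(H)<n-1$ for every hyperplane $H$ of $\mathbb{V}(k,q^m)$. Finally, assuming these conditions hold, $\Gamma^\perp$ is an $\fqm$-subspace of $\mathbb{V}(n,q^m)$ of dimension $k=n-(n-k)$ meeting the subgeometry $W$ trivially, so $(W,\Gamma^\perp)$ is a valid realisation for $U^d$ in the sense of Theorem~\ref{thm:projection_subg}. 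Applying the Delsarte dual construction~\eqref{Delsarte dual} to $U^d$ with this data and using $(\Gamma^\perp)^\perp=\Gamma$ gives $(U^d)^d=(W+\Gamma)/\Gamma=U$. The main technical point I expect is the bookkeeping around \eqref{eq:*perp}: correctly using the star-perp commutation to swap the roles of $\Gamma$ and $\Gamma^\perp$, thereby converting intersections with $\Gamma^\perp$ into containments of $\Gamma$ that fall within the scope of Proposition~\ref{prop:geometricdefect}.
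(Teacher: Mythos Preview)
Your proposal is correct and follows essentially the same route as the paper: both arguments compute $\dim_{\fq}(W\cap\Gamma^\perp)$ and identify it with $k-t_s$ via the $\perp'$ correspondence together with Proposition~\ref{prop:geometricdefect}, then deduce the equivalences from Theorem~\ref{thm:non degenerate property}. The only cosmetic difference is that the paper establishes $h=k-t_s$ as two separate inequalities (first $t_s\le k-h$ from $S_h=W\cap\Gamma^\perp$, then $k-t_s\le h$ from a minimal $T'$), whereas you package this as a single order-reversing bijection $S\leftrightarrow L\leftrightarrow T$ and read off the equality at the extremal element; the content is the same.
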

\begin{proof}
Since $U^d=\frac{W+\Gamma^\perp}{\Gamma^\perp}$, if $\dim_{\fq}(W\cap \Gamma^{\perp})=h$, we have
\[ \dim_{\fq}(U^d)=n-\dim_{\fq}(W\cap \Gamma^{\perp})=n-h.\]
Let $S_h=W\cap \Gamma^{\perp}$, then $S_h^* \subseteq \Gamma^{\perp}$ and hence by (\ref{eq:*perp}) we have $\Gamma \subseteq (S_h^{\perp'})^*$. This means that the subspace $T=\frac{(S_h^{\perp'})^*+\Gamma}{\Gamma}$ of $\mathbb{V}(k,q^m)$ is a $(k-h)$-dimensional $\F_{q^m}$-subspace of weight at least $n-h$ (with respect to $U$) and hence it has defect at least $n-k$. Since $n-k$ is the maximum possible defect, we get $\varepsilon_U(T)=n-k$. By Remark \ref{rem: t_i minimum dimension}
\[t_s=\min\{\dim_{\fqm}(T) \,:\, T\in\mathcal{L}(\mathbb{V}(k,q^m))\,\,\text{and}\,\, \varepsilon_U(T)=n-k\}\]
(see Definition \ref{def:non-zero max defects}) and hence   $t_s\leq k-h$. 
 On the other hand, if  $T'$ is an $\F_{q^m}$-subspace of $\mathbb{V}(k,q^m)$ of dimension $t_s$ and  with defect $n-k$,  by Proposition \ref{prop:geometricdefect} we have $T'=\frac{S_{n-k+t_s}^*+\Gamma}{\Gamma}$ and $\varepsilon_{U}(T')=n-k=\dim_{\fqm}(S_{n-k+t_s}^*\cap \Gamma)$, and so $\Gamma \subseteq S_{n-k+t_s}^*$. From which, by duality, we derive that $(S_{n-k+t_s}^{\perp'})^* \subseteq \Gamma^{\perp} $, i.e. $S_{n-k+t_s}^{\perp'} \subseteq W\cap \Gamma^{\perp}$, implying that $k-t_s \leq h$. Hence, we get $h=k-t_s$. This proves \eqref{eq:dim(Ud)}. 
 The equivalence of {\it 1.} and {\it 2.} immediately follows from the above considerations. 
 The remaining equivalences are consequences of Theorem \ref{thm:non degenerate property}.
\end{proof}

From the previous proposition and by Theorem \ref{thm:projection_subg}, we have that a subspace $U$ with the property that $w_{U}(H)< n-1$ for each hyperplane $H$ and its Delsarte dual $U^d$ have the same dimension.

\begin{remark}
    Note that the definition of  Delsarte dual of an $\fq$-subspace depends on the choice of the bilinear form $\sigma'$. However, it is possible to prove that, up to the action of the linear group $\mathrm{GL}(n-k,\fqm)$, the notion of Delsarte dual of an $\fq$-subspace can be considered independent of the choice of $\sigma'$ (see \cite[Remark 3.7]{csajbok2021generalising}).
    More precisely,  different choices for $\sigma'$ provide different Delsarte duals which are $\mathrm{GL}(n-k,\fqm)$-equivalent.
    \end{remark}

We now introduce a map between the lattices $\mathcal{L}(\mathbb{V}(k,q^m))$ and $\mathcal{L}(\mathbb{V}(n-k,q^m))$, where $\mathbb{V}(k,q^m)=\frac{\mathbb{V}(n,q^m)}{\Gamma}$ and $\mathbb{V}(n-k,q^m)=\frac{\mathbb{V}(n,q^m)}{\Gamma^\perp}$. 
This map sends $\F_{q^m}$-subspaces of $\mathbb{V}(k,q^m)$ with positive defect with respect to $U$ to proper subspaces of $\mathbb{V}(n-k,q^m)$ with positive defect with respect to $U^d$.
Moreover, we can also extend the action of this map to $\mathcal{L}_q(\mathbb{V}(k,q^m))$ and we will see that it transforms subgeometries of $\mathbb{V}(k,q^m)$ with respect to $U$ into the entire space $\mathbb{V}(n-k,q^m)$.

\begin{theorem} \label{thm: dual defect}
Let  $T$ be an $\fqm$-subspace of $\mathbb{V}(k,q^m)=\frac{\mathbb{V}(n,q^m)}{\Gamma}$ with $w_{U}(T)=h$ and such that $\langle T \cap U \rangle_{\fqm}=T$. Let $T=\frac{S_h^*+\Gamma}{\Gamma}$ where $S_h$ is an $h$-dimensional $\fq$-subspace of $W$. Let $T^d$ be the $\fqm$-subspace of $\mathbb{V}(n-k,q^m)=\frac{\mathbb{V}(n,q^m)}{\Gamma^\perp}$ defined as follows
\begin{equation} \label{eq:T^d}
T^d= \frac{(S_h^*)^\perp+\Gamma^\perp}{\Gamma^\perp}.   \end{equation} 
The following hold:
\begin{itemize}
    \item[\bf 1.] $\mathbb{V}(k,q^m)^d=\{ \underline 0\}$;
    \item[\bf 2.]  $\dim_{\fqm} (T^d)=n-k-\varepsilon_{U}(T)$;
    \item[\bf 3.] $w_{U^d}(T^d)\geq n-w_{U}(T)$;
    \item[\bf 4.] $\varepsilon_{U^d}(T^d)\geq k- \dim_{\fqm} (T)$;
    \item[\bf 5.] $(T^d)^d \subseteq T$ and $\varepsilon_{U}((T^d)^d)\geq \varepsilon_{U}(T)$;
    \item[\bf 6.] If $T'$ is an $\fqm$-subspace of $\mathbb{V}(k,q^m)$ such that
    \[T'\subseteq T \,\,\,\, \mbox{and} \,\,\,\, \langle T' \cap U \rangle_{\fqm}=T',\]
we have
\[T^d\subseteq T'^d.\]
\end{itemize}
\end{theorem}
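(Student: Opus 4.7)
The plan is to systematically use the identity $(S^{\perp'})^*=(S^*)^\perp$ of \eqref{eq:*perp} together with Proposition \ref{prop:geometricdefect}, which pins down $\dim_{\F_{q^m}}(S_h^*\cap\Gamma)=\varepsilon_U(T)$ and lets us pass between the ``$W$-side'' and the ``quotient-side''. Throughout, following the remark preceding the statement, I work under the assumption $t_s=k$, equivalently $W\cap\Gamma^\perp=\{\underline 0\}$ (Proposition \ref{prop:Gamma_perp}), so that $p_{\Gamma^\perp}|_W$ is $\F_q$-injective.

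For \textbf{1}, I apply formula \eqref{eq:T^d} with $T=\mathbb V(k,q^m)$: since $\langle\mathbb V(k,q^m)\cap U\rangle_{\fqm}=\mathbb V(k,q^m)$, the associated subgeometry piece is $S_h=W$, so $S_h^*=\mathbb{V}(n,q^m)$ and $(S_h^*)^\perp=\{\underline 0\}$, giving $\mathbb V(k,q^m)^d=\{\underline 0\}$. For \textbf{2}, I compute $\dim_{\fqm}((S_h^*)^\perp+\Gamma^\perp)=(n-h)+k-\dim_{\fqm}((S_h^*+\Gamma)^\perp)$, rewrite $\dim_{\fqm}(S_h^*+\Gamma)=h+(n-k)-\dim_{\fqm}(S_h^*\cap\Gamma)$, and substitute $\dim_{\fqm}(S_h^*\cap\Gamma)=\varepsilon_U(T)$ from Proposition \ref{prop:geometricdefect}. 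Subtracting $\dim_{\fqm}\Gamma^\perp=k$ gives $\dim_{\fqm}T^d=n-k-\varepsilon_U(T)$.

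For \textbf{3}, the key observation is that by \eqref{eq:*perp}, $S_h^{\perp'}\subseteq(S_h^{\perp'})^*=(S_h^*)^\perp\subseteq(S_h^*)^\perp+\Gamma^\perp$, while simultaneously $S_h^{\perp'}\subseteq W$. Hence $p_{\Gamma^\perp}(S_h^{\perp'})\subseteq U^d\cap T^d$, and since $p_{\Gamma^\perp}|_W$ is $\F_q$-injective, this projected image has $\F_q$-dimension $\dim_{\fq}S_h^{\perp'}=n-h$, proving $w_{U^d}(T^d)\ge n-w_U(T)$. Item \textbf{4} is then immediate from \textbf{2} and \textbf{3}: $\varepsilon_{U^d}(T^d)=w_{U^d}(T^d)-\dim_{\fqm}T^d\ge(n-h)-(n-k-\varepsilon_U(T))=k-(h-\varepsilon_U(T))=k-\dim_{\fqm}T$. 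Item \textbf{6} is formal: by Proposition \ref{prop:geometricdefect}, $T'\subseteq T$ forces $S_{h'}\subseteq S_h$; dualizing reverses inclusions, yielding $(S_h^*)^\perp\subseteq(S_{h'}^*)^\perp$ and therefore $T^d\subseteq T'^d$.

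Item \textbf{5} is where the most care is needed and will be the main obstacle. The task is to identify the $\F_q$-subspace $S_0\subseteq W$ that represents $T^d$ in the quotient model based on $(W,\Gamma^\perp)$, so that formula \eqref{eq:T^d} can be iterated. I would set $S_0=W\cap((S_h^*)^\perp+\Gamma^\perp)$, verify that $(S_0)^*+\Gamma^\perp=(S_h^*)^\perp+\Gamma^\perp$ (using $S_0\subseteq(S_h^*)^\perp+\Gamma^\perp$), and check that $\langle T^d\cap U^d\rangle_{\fqm}=T^d$ so that Proposition \ref{prop:geometricdefect} applies to $T^d$ with the dual data $(W,\Gamma^\perp)$. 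Then applying \eqref{eq:T^d} a second time yields $(T^d)^d=\frac{(S_0^*)^\perp+\Gamma}{\Gamma}$. From $S_h^{\perp'}\subseteq S_0$ (noted in \textbf{3}) I get $(S_h^*)^\perp=(S_h^{\perp'})^*\subseteq S_0^*$, hence $(S_0^*)^\perp\subseteq((S_h^*)^\perp)^\perp=S_h^*$, which gives $(T^d)^d\subseteq T$. Finally, for the defect, I use the identity $(S_0^*+\Gamma^\perp)^\perp=(S_h^*)^\perp\cap\Gamma=\cdots$; more precisely, taking $\perp$ of $S_0^*+\Gamma^\perp=(S_h^*)^\perp+\Gamma^\perp$ yields $(S_0^*)^\perp\cap\Gamma=S_h^*\cap\Gamma$, so Proposition \ref{prop:geometricdefect}\textbf{.2} gives $\varepsilon_U((T^d)^d)=\dim_{\fqm}((S_0^*)^\perp\cap\Gamma)=\dim_{\fqm}(S_h^*\cap\Gamma)=\varepsilon_U(T)$, in fact with equality, which is stronger than the stated inequality.
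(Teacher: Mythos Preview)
Your argument for items \textbf{1}--\textbf{4} and \textbf{6} matches the paper's proof essentially line by line; the only cosmetic difference is that you make the hypothesis $W\cap\Gamma^\perp=\{\underline 0\}$ explicit, whereas the paper's proof of \textbf{3} uses it tacitly when asserting that $\frac{S_h^{\perp'}+\Gamma^\perp}{\Gamma^\perp}$ has $\F_q$-dimension $n-h$.

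For item \textbf{5} you take a more direct route than the paper. The paper simply iterates items \textbf{2} and \textbf{4}: since $\dim_{\fqm}T^d=n-k-\varepsilon_U(T)$ and $\varepsilon_U((T^d)^d)\ge(n-k)-\dim_{\fqm}T^d$, one immediately gets $\varepsilon_U((T^d)^d)\ge\varepsilon_U(T)$. Your approach instead identifies the perp relation $(S_0^*)^\perp\cap\Gamma=S_h^*\cap\Gamma$, which is a genuinely nice observation and in fact yields the \emph{equality} $\varepsilon_U((T^d)^d)=\varepsilon_U(T)$, stronger than what the theorem states. However, your justification of that last equality has a gap: when you invoke Proposition~\ref{prop:geometricdefect}.\textbf{2} for $(T^d)^d$, the proposition requires the $\F_q$-subspace appearing in the representation to have $\F_q$-dimension equal to $w_U((T^d)^d)$, and you have not checked that $w_U((T^d)^d)=\dim_{\fq}S_0^{\perp'}=n-l$; a priori the correct subspace is $S'=W\cap((S_0^*)^\perp+\Gamma)\supseteq S_0^{\perp'}$. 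The gap is easy to close: since you already have $(T^d)^d\subseteq T$, Proposition~\ref{prop:geometricdefect}.\textbf{3} gives $S'\subseteq S_h$, hence $(S_0^{\perp'})^*\cap\Gamma\subseteq(S')^*\cap\Gamma\subseteq S_h^*\cap\Gamma$, and your identity forces all three to coincide, so Proposition~\ref{prop:geometricdefect}.\textbf{2} applied with $S'$ gives the equality. As written, though, your argument only establishes $\varepsilon_U((T^d)^d)\ge\dim_{\fqm}((S_0^*)^\perp\cap\Gamma)=\varepsilon_U(T)$, which is exactly the theorem's claim, so the proof of the statement itself is complete.
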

\begin{proof}
Observe that by  {\bf 1.} of Proposition \ref{prop:geometricdefect}, $T$ can be written as $T=\frac{S_h^*+\Gamma}{\Gamma}$.
Since  $\mathbb{V}(k,q^m)=\frac{W^*+\Gamma}{\Gamma}$, we notice that {\bf 1.} is trivially obtained. Now, by (\ref{eq:*perp}) and by $\mathbf 2.$ of Proposition \ref{prop:geometricdefect} we get {\bf 2.}, indeed
\[ \dim_{\fqm} (T^d)= \dim_{\fqm} ((S_h)^{\perp'})^* - \dim_{\fqm} ((S_h^*)^\perp \cap \Gamma^\perp)= n-h -(n- \dim_{\fqm} (S_h^* + \Gamma))= \]
\[=n-h-(n-h-n+k+\dim_{\fqm}(S_h^* \cap \Gamma))=n-k-\varepsilon_{U}(T). \]
To prove {\bf 3.}, we note that since $\frac{(S_h)^{\perp'}+\Gamma^\perp}{\Gamma^\perp} \subseteq {T}^d \cap \frac{W+\Gamma^\perp}{\Gamma^\perp}$, we have $w_{U^d}(T^d)\geq n-h=n-w_{U}(T)$. 
To this point, {\bf 4.} can be easily obtained by combining {\bf 2.} and {\bf 3.}: 
\[\varepsilon_{U^d}(T^d)=w_{U^d}(T^d)-\dim_{\fqm} (T^d)\geq n-w_{U}(T) -n+k+\varepsilon_{U}(T)=k- \dim_{\fqm} (T).\]
Let us prove {\bf 5.}.  Since
 \[\frac{S_h^{\perp'}+\Gamma^\perp}{\Gamma^\perp} \subseteq {T}^d \cap \frac{W+\Gamma^\perp}{\Gamma^\perp},\]
  by $\mathbf 1.$ of Proposition \ref{prop:geometricdefect}, we can write ${T}^d=\frac{S_l^*+\Gamma^\perp}{\Gamma^\perp}$ where $S_l$ is an $\fq$-subspace of $W$ containing $S_h^{\perp'}$, so $S_l^{\perp' }\subseteq S_h$ and $(S_l^*)^{\perp}\subseteq S_h^*$. Hence
 \[ ({T}^d)^d=\frac{(S_l^*)^\perp+\Gamma}{\Gamma} \subseteq  \frac{S_h^*+\Gamma}{\Gamma}={T}.\]
 Moreover, using {\bf 4.} and {\bf 2.}, we can write
 \[\varepsilon_{{U}}(({T}^d)^d)\geq n-k-\dim_{\fqm} ({T}^d)=\varepsilon_{{U}}({T}).\]
Finally, in the hypotheses of $\mathbf 6.$, by $\mathbf{3.} $ of Proposition \ref{prop:geometricdefect} we can write $T'=\frac{S_{h'}^*+\Gamma}{\Gamma}$ where $S_{h'} \subseteq S_h$, and hence   $S_{h}^{\perp '} \subseteq S_{h'}^{\perp '}$. From which we get $(S_{h}^{\perp '})^* \subseteq (S_{h'}^{\perp '})^*$ and hence $T^d\subseteq T'^d$.
\end{proof}

Unlike a classical duality, the previous inequalities and containments are not always equalities. In the following result, we characterize the cases of equality.
More precisely, we establish under which conditions the inequalities in {\bf 3.}, {\bf 4.} and {\bf 5.} of the previous theorem become equalities.

\begin{theorem} \label{thm: =dual defect}
Let $T$ be an $\fqm$-subspace of $\mathbb{V}(k,q^m)=\frac{\mathbb{V}(n,q^m)}{\Gamma}$ with $w_{U}(T)=h$ such that $\langle T \cap U \rangle_{\fqm}=T$ and $\varepsilon_{U}(T)>0$. If $T$ is minimal with respect to its defect, then the inequalities in {\bf 3.}, {\bf 4.} and {\bf 5.} of Theorem \ref{thm: dual defect} are equalities, i.e.
\begin{itemize}  
    \item[\bf 1.] $w_{U^d}(T^d) =n-w_{U}(T)$;
    \item[\bf 2.] $\varepsilon_{U^d}(T^d)=k- \dim_{\fqm} (T)$;
    \item[\bf 3.] $(T^d)^d = T$.    
\end{itemize}
Moreover, $T^d$ is minimal with respect to its defect as well.
\end{theorem}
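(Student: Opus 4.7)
The plan is to first establish item 3, then use it to derive items 1 and 2 from the dimension formulas of the previous theorem, and finally prove the minimality of $T^d$ by a symmetric contradiction argument. Item 3 follows almost immediately from item 5 of Theorem~\ref{thm: dual defect}: that result gives $(T^d)^d \subseteq T$ together with $\varepsilon_U((T^d)^d) \geq \varepsilon_U(T) > 0$, while the minimality of $T$ forces every strict $\fqm$-subspace of $T$ to have strictly smaller defect. Hence $(T^d)^d$ cannot be a proper subspace of $T$, so $(T^d)^d = T$.

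For items 1 and 2, I would apply the dimension formula from item 2 of Theorem~\ref{thm: dual defect} twice. First, $\dim_{\fqm}(T^d) = n-k-\varepsilon_U(T)$. Then, applying the analogous formula obtained by swapping $(U,\Gamma)$ with $(U^d,\Gamma^\perp)$ to $T^d$ in $\mathbb{V}(n-k,q^m)$, and using $(T^d)^d = T$ from the previous step, I get $\dim_{\fqm}(T) = k - \varepsilon_{U^d}(T^d)$, i.e.\ $\varepsilon_{U^d}(T^d) = k - \dim_{\fqm}(T)$, which is item 2. Item 1 is then immediate from $w_{U^d}(T^d) = \varepsilon_{U^d}(T^d) + \dim_{\fqm}(T^d) = (k-\dim_{\fqm}(T)) + (n-k-\varepsilon_U(T)) = n - w_U(T)$.

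For the minimality of $T^d$, I would argue by contradiction: suppose there exists $T' <_{\fqm} T^d$ with $\varepsilon_{U^d}(T') \geq \varepsilon_{U^d}(T^d)$. Since this common value is non-negative, by item 2 of Property~\ref{property:defect} I may replace $T'$ with $\langle T' \cap U^d\rangle_{\fqm}$ without changing the defect, so that $\langle T' \cap U^d\rangle_{\fqm} = T'$. Applying item 6 of Theorem~\ref{thm: dual defect} in the symmetric direction gives $T = (T^d)^d \subseteq (T')^d$, while the symmetric form of item 2 yields $\dim_{\fqm}((T')^d) = k - \varepsilon_{U^d}(T') \leq k - \varepsilon_{U^d}(T^d) = \dim_{\fqm}(T)$, forcing $(T')^d = T$. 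Applying the symmetric form of item 5 to $T'$ then gives $T^d = ((T')^d)^d \subseteq T'$, contradicting $T' \subsetneq T^d$.

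The main technical obstacle in Steps 2 and 3 is legitimizing the symmetric use of Theorem~\ref{thm: dual defect} with $(U^d,\Gamma^\perp)$ playing the role of $(U,\Gamma)$. This rests on the formal self-duality identity $(S^{\perp'})^* = (S^*)^\perp$ from equation~\eqref{eq:*perp}, together with the observation that $\langle T^d \cap U^d\rangle_{\fqm} = T^d$ holds automatically, since $T^d$ is by construction the $\fqm$-span of the image of the $\fq$-subgeometry $S_h^{\perp'}\subseteq W$ modulo $\Gamma^\perp$; this last fact unlocks the analogue of Proposition~\ref{prop:geometricdefect} and hence of items 2, 5 and 6 of Theorem~\ref{thm: dual defect} in the reverse direction.
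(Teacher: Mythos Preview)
Your proof is correct and follows essentially the same approach as the paper's, but organized somewhat more economically. The paper establishes items \textbf{1}--\textbf{3} by assuming $S_h^{\perp'}\subsetneq S_l$ (where $T^d=\frac{S_l^*+\Gamma^\perp}{\Gamma^\perp}$) and then explicitly recomputing $\dim_{\fqm}((T^d)^d)$ and $\varepsilon_U((T^d)^d)$ to reach a contradiction with the minimality of $T$; you instead simply cite item~\textbf{5} of Theorem~\ref{thm: dual defect} (which packages exactly that computation) to get item~\textbf{3} immediately, and then deduce \textbf{1} and \textbf{2} from the symmetric instance of item~\textbf{2}. For the minimality of $T^d$, the paper chooses a subspace $\tilde T\subseteq T^d$ that is itself minimal with $\varepsilon_{U^d}(\tilde T)=\varepsilon_{U^d}(T^d)$ and applies the already-proved item~\textbf{3} of the present theorem to $\tilde T$ to conclude $\tilde T=T^d$; your contradiction argument instead works directly with an arbitrary $T'<_{\fqm}T^d$ satisfying $\varepsilon_{U^d}(T')\ge\varepsilon_{U^d}(T^d)$ and invokes the symmetric items~\textbf{2}, \textbf{5}, \textbf{6} of Theorem~\ref{thm: dual defect}. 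Both routes are valid; yours has the mild advantage of not requiring the auxiliary subspace to be chosen minimal.
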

\begin{proof}
Following the proof of the previous theorem we have
\[T^d=\frac{S_l^*+\Gamma^\perp}{\Gamma^\perp} \,\, \mbox{and} \,\,  T^d=  \langle T^d \cap U^d \rangle_{\fqm},\] where $w_{U^d}(T^d)=l$ and  $S_l$ is an $l$-dimensional $\fq$-subspace of $W$ containing $S_h^{\perp'}$. If $S_l=S_h^{\perp'}$, then in {\bf 3.}, {\bf 4.} and {\bf 5.} of Theorem \ref{thm: dual defect} we have equalities.
By contradiction, assume that $S_h^{\perp'}\subset S_l$, that is, $l=n-h+r$ with $r>0$. 
In particular, by 2. of Theorem \ref{thm: dual defect}, $\varepsilon_{U^d}(T^d)=l-\dim_{\fqm} (T^d)= n-h+r-(n-k-\varepsilon_{U}(T))=k+r-\dim_{\fqm} (T)$. 
By applying the previous theorem to the $\fqm$-subspace ${T}^d$ of $\mathbb{V}(n-k,q^m)$ we get

\[ \dim_{\fqm} ((T^d)^d)=k-\varepsilon_{U^d}(T^d)=\dim_{\fqm} (T)-r< \dim_{\fqm} (T)  \]
and hence $(T^d)^d \subset T$; also
\[ \varepsilon_{U}((T^d)^d)= w_{U}((T^d)^d)-\dim_{\fqm} ((T^d)^d) \geq n-l-(\dim_{\fqm} (T)-r)= h- \dim_{\fqm} (T)=\varepsilon_{U}(T),\]

and this contradicts the minimality of $T$.\\
Finally, we prove the minimality of $T^d$. Let $\tilde{T}$ be a subspace of $T^d$ such that $\langle \tilde{T} \cap U \rangle_{\fqm}=\tilde{T}$,  $\varepsilon_{U^d}(\tilde{T})=\varepsilon_{U^d}(T^d)$ and minimal with respect to its defect. Then by $\mathbf 6.$ and $\mathbf 2.$ of Theorem \ref{thm: dual defect} and by $\mathbf 3.$ of this theorem, we have $T\subseteq\tilde{T}^d$ and $\dim_{\fqm}( \tilde{T}^d)=k- \varepsilon_{U^d}(\tilde{T})=\dim_{\fqm} (T)$ and hence $\tilde{T}^d=T$. Since $\tilde{T}$ is minimal with respect to its defect, by $\mathbf 3.$ we get $\tilde{T}=T^d$.
\end{proof}

The Delsarte duality clearly does not provide a one-to-one correspondence between the subspaces in 
$\mathcal{L}(\mathbb{V}(k,q^m))$ and  $\mathcal{L}(\mathbb{V}(n-k,q^m))$, but if we restrict our attention on the subspaces in  
\[ \mathcal{E}_U(\mathbb{V}(k,q^m))=\{ T \in \mathcal{L}(\mathbb{V}(k,q^m)) \colon T \text{ is a proper subspace, minimal w.r.t. its positive defect w.r.t. }U \}, \]
we can show a one-to-one correspondence between $\mathcal{E}_U(\mathbb{V}(k,q^m))$ and $\mathcal{E}_{U^d}(\mathbb{V}(n-k,q^m))$, as a consequence of the above theorems. 

\begin{corollary}
    Let $U$ be an $n$-dimensional $\fq$-subspace of a $k$-dimensional $\fqm$-vector space $\mathbb{V}(k,q^m)$ such that $w_{{U}}(H)< n-1$ for each hyperplane $H$  of $\mathbb{V}(k,q^m)$. Let $U^d$ be a Delsarte dual of $U$. 
    The map
    \[ \varphi \colon T \in \mathcal{E}_U(\mathbb{V}(k,q^m)) \mapsto T^d \in \mathcal{E}_{U^d}(\mathbb{V}(n-k,q^m)), \]
    where $T^d$ is defined in Theorem \ref{thm: dual defect}, is a one-to-one correspondence and $\varphi$ reverses the order of the set inclusion.
\end{corollary}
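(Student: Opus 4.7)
The plan is to verify that $\varphi$ is well-defined, exhibit an inverse map built by the same recipe, check that the two are mutually inverse, and conclude the order-reversing property by a direct appeal to Theorem~\ref{thm: dual defect}. Everything should reduce to citations of the two previous theorems together with the involutivity supplied by Proposition~\ref{prop:Gamma_perp}.

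First I would check well-definedness. Fix $T\in\mathcal{E}_U(\mathbb{V}(k,q^m))$: by definition $T$ is a proper subspace with $\varepsilon_U(T)>0$, minimal with respect to its defect, whence $\langle T\cap U\rangle_{\fqm}=T$ and so the construction \eqref{eq:T^d} is admissible. Item $\mathbf{2.}$ of Theorem~\ref{thm: dual defect} gives $\dim_{\fqm}(T^d)=n-k-\varepsilon_U(T)<n-k$, so $T^d$ is proper in $\mathbb{V}(n-k,q^m)$; item $\mathbf{2.}$ of Theorem~\ref{thm: =dual defect} gives $\varepsilon_{U^d}(T^d)=k-\dim_{\fqm}(T)>0$ since $T$ is proper in $\mathbb{V}(k,q^m)$; and the concluding sentence of the same theorem states that $T^d$ itself is minimal with respect to its defect. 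Hence $T^d\in\mathcal{E}_{U^d}(\mathbb{V}(n-k,q^m))$.

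Next I would produce the inverse. By Proposition~\ref{prop:Gamma_perp}, the hyperplane-weight hypothesis on $U$ is equivalent to $\dim_{\fq}(U^d)=n$ and forces $(U^d)^d=U$. In particular $U^d$ is an $n$-dimensional $\fq$-subspace of $\mathbb{V}(n-k,q^m)$ satisfying $\dim_{\fq}((U^d)^d)=n$, so Proposition~\ref{prop:Gamma_perp}, applied with $U^d$ in the role of $U$, shows that $U^d$ inherits the same hyperplane-weight hypothesis. The very recipe of the corollary then yields a well-defined map
\[
\psi\colon S\in\mathcal{E}_{U^d}(\mathbb{V}(n-k,q^m))\longmapsto S^d\in\mathcal{E}_U(\mathbb{V}(k,q^m)).
\]
Item $\mathbf{3.}$ of Theorem~\ref{thm: =dual defect} applied in each direction gives $(T^d)^d=T$ for $T\in\mathcal{E}_U(\mathbb{V}(k,q^m))$ and $(S^d)^d=S$ for $S\in\mathcal{E}_{U^d}(\mathbb{V}(n-k,q^m))$, so $\varphi$ and $\psi$ are mutual inverses and $\varphi$ is bijective. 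For the order-reversing property, if $T'\subseteq T$ are both in $\mathcal{E}_U(\mathbb{V}(k,q^m))$, minimality gives $\langle T'\cap U\rangle_{\fqm}=T'$, hence item $\mathbf{6.}$ of Theorem~\ref{thm: dual defect} yields $T^d\subseteq(T')^d$.

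The only step that is not a direct quotation of the two preceding theorems is the self-duality check in the third paragraph, namely verifying that $U^d$ shares the hyperplane-weight hypothesis with $U$ so that $\psi$ is well-defined. This is the main obstacle, and it is handled by reading the hypothesis as the equivalent condition $\dim_{\fq}(U^d)=n$ and exploiting the involution $(U^d)^d=U$ of Proposition~\ref{prop:Gamma_perp}.
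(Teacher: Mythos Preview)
Your proof is correct and follows precisely the approach the paper intends: the corollary is stated there without proof, merely ``as a consequence of the above theorems,'' and you have filled in exactly those consequences in the right order. The one nontrivial detail the paper leaves implicit---that $U^d$ inherits the hyperplane-weight hypothesis so that the inverse map is well-defined---is handled correctly by your appeal to the equivalences in Proposition~\ref{prop:Gamma_perp} together with the involution $(U^d)^d=U$.
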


As a consequence of the previous results we are now able to describe the sequence of the maximum non-zero defects with respect to the Delsarte dual $U^d$ of an $\fq$-subspace $U$.  

\begin{theorem} \label{thm: dual maximum defects}
Let $U$ be an $n$-dimensional $\fq$-subspace of a $k$-dimensional $\fqm$-vector space $\mathbb{V}(k,q^m)$ such that $w_{{U}}(H)< n-1$ for each hyperplane $H$  of $\mathbb{V}(k,q^m)$. Let $U^d$ be a Delsarte dual of $U$. 
If $\varepsilon_U(t)>0$ for some $t\in \{1,\dots,k-1\}$, then 
\begin{equation} \label{eq: lower bound max defect}
\varepsilon_{U^d}(n-k-\varepsilon_U(t))\geq k-t>0.
\end{equation}

Moreover, if
\[0< \varepsilon_U(t_1)=\varepsilon_1 < \cdots < \varepsilon_U(t_{s-1})=\varepsilon_{s-1}<\varepsilon_U(k)=n-k
\]

is the sequence of maximum non-zero defects with respect to $U$, then for every $i\in \{1,\dots, s-1\}$,

\begin{equation} \label{eq: max def equal}
\varepsilon_{U^d}(n-k-\varepsilon_U(t_i)) = k-t_i.    
\end{equation}

In particular, 
\begin{equation} \label{eq: correspondence of minimum dimension subspaces}
T \leq_{\fqm} \mathbb{V}(k,q^m) \,: \, \dim_{\fqm} (T)=t_i \,\mbox{and} \,\,  \varepsilon_U(T)=\varepsilon_U(t_i)
\end{equation}
\mbox{if and only if }
\begin{equation*}
T^d \leq_{\fqm} \mathbb{V}(n-k,q^m) \,: \, \dim_{\fqm} (T^d)=n-k-\varepsilon_U(t_i) \,\mbox{and} \, \,\varepsilon_{U^d}(T^d)=\varepsilon_{U^d}(n-k-\varepsilon_U(t_i))=k-t_i.
\end{equation*}

Hence,
\begin{equation} \label{eq:sequence max def dual}
0< \varepsilon_{U^d}(n-k-\varepsilon_{s-1})=k-t_{s-1} < \varepsilon_{U^d}(n-k-\varepsilon_{s-2})=k-t_{s-2} <\cdots < \varepsilon_{U^d}(n-k-\varepsilon_1)=k-t_{1}<\varepsilon_{U^d}(n-k)=k.
\end{equation}
is the sequence of maximum non-zero defects with respect to $U^d$.
In particular, the lengths of the sequence of maximum non-zero defects with respect to $U$ and $U^d$ are equal.
\end{theorem}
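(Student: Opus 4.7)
The plan is to prove the statement in four tightly linked steps, repeatedly using Theorem \ref{thm: =dual defect} together with the symmetry $(U^d)^d = U$, which holds by Proposition \ref{prop:Gamma_perp} under the standing hypothesis $w_U(H) < n-1$. I will also exploit the fact that this hypothesis propagates to $U^d$: one gets $\dim_{\fq} U^d = n$, $\langle U^d \rangle_{\fqm} = \mathbb{V}(n-k,q^m)$, and $w_{U^d}(H') < n-1$ for every hyperplane $H'$ of $\mathbb{V}(n-k, q^m)$, so the construction of Step 1 below is symmetric in $U$ and $U^d$.

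For the inequality (\ref{eq: lower bound max defect}), I would choose an $\fqm$-subspace $\tilde T$ of $\mathbb{V}(k,q^m)$ of minimum dimension subject to $\varepsilon_U(\tilde T) \geq \varepsilon_U(t)$. Monotonicity (Property \ref{prop: monotonicity_maxi_defect}) forces $\dim_{\fqm} \tilde T \leq t$ and $\varepsilon_U(\tilde T) = \varepsilon_U(t)$, while the minimum-dimension choice forces $\tilde T$ to be minimal with respect to its defect (any strict subspace with defect at least $\varepsilon_U(\tilde T)$ would contradict minimality of dimension). Theorem \ref{thm: =dual defect} then yields $\dim_{\fqm} \tilde T^d = n-k-\varepsilon_U(t)$ and $\varepsilon_{U^d}(\tilde T^d) = k - \dim_{\fqm} \tilde T \geq k - t$, establishing (\ref{eq: lower bound max defect}).

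For the equality (\ref{eq: max def equal}), I apply Step 1 symmetrically to $U^d$. Writing $d_i := \varepsilon_{U^d}(n-k-\varepsilon_U(t_i))$, Step 1 gives $d_i \geq k-t_i > 0$, and applying Step 1 to $U^d$ with $t' = n-k-\varepsilon_U(t_i)$ produces $\varepsilon_U(k-d_i) \geq \varepsilon_U(t_i)$. Remark \ref{rem: t_i minimum dimension} then forces $k - d_i \geq t_i$, i.e. $d_i \leq k-t_i$, yielding $d_i = k - t_i$. The forward direction of (\ref{eq: correspondence of minimum dimension subspaces}) comes for free: any $T$ of dimension $t_i$ with $\varepsilon_U(T) = \varepsilon_U(t_i)$ is minimal by Remark \ref{rem: t_i minimum dimension}, so Theorem \ref{thm: =dual defect} produces the desired $T^d$.

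To identify the complete dual sequence (\ref{eq:sequence max def dual}) and close the converse of (\ref{eq: correspondence of minimum dimension subspaces}), I would count jumps. Step 2 exhibits $s$ distinct positive values $k-t_{s-1} < \cdots < k-t_1 < k$ taken by $\varepsilon_{U^d}$, so the length $s'$ of the dual sequence satisfies $s' \geq s$. Running the same construction from $U^d$ back to $U$ at the jump dimensions $t'_j$ of the dual sequence yields $s'-1$ distinct positive values of $\varepsilon_U$, available only among $\varepsilon_1,\ldots,\varepsilon_{s-1}$, so $s' \leq s$. Hence $s' = s$ and (\ref{eq:sequence max def dual}) is the full sequence for $U^d$. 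The converse of (\ref{eq: correspondence of minimum dimension subspaces}) now follows: any $T'$ of dimension $n-k-\varepsilon_U(t_i)$ with $\varepsilon_{U^d}(T') = k-t_i$ is minimal by the $U^d$-version of Remark \ref{rem: t_i minimum dimension}, and Theorem \ref{thm: =dual defect} applied to $T'$ returns $T = (T')^d$ with the required dimension and defect. I expect the main obstacle to be this last length-matching step $s' = s$: Theorem \ref{thm: =dual defect} only builds the dual at specific minimal subspaces, and ruling out additional jumps on the dual side essentially requires the bidirectional symmetry guaranteed by $(U^d)^d = U$.
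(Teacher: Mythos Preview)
Your proposal is correct and follows essentially the same line as the paper: both arguments use the dual map from Theorems~\ref{thm: dual defect}--\ref{thm: =dual defect} to transport defects between $U$ and $U^d$, then invoke the minimality characterisation of the $t_i$ (Remark~\ref{rem: t_i minimum dimension}) together with the symmetry $(U^d)^d=U$ to upgrade the lower bound to equality and pin down the full dual sequence. Your explicit length-matching argument $s'=s$ via bidirectional jump-counting is cleaner than the paper's terse ``easily follows'', but the underlying idea is the same.
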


\begin{proof}
  Let $t\in \{1,\dots,k-1\}$ be such that $\varepsilon_U(t)>0$ and let $T$ be a $t$-dimensional $\fqm$-subspace of $\mathbb{V}(k,q^m)$ such that  $\varepsilon_U(T)=\varepsilon_U(t)$. Then by {\it 2.} of Property \ref{property:defect} we have $\varepsilon_U(T')=\varepsilon_U(T)$ where $T'=\langle T\cap U\rangle_{\fqm}$. Hence by $\mathbf 2.$ and $\mathbf 4.$ of Theorem \ref{thm: dual defect} we get
  \[\dim_{\fqm} (T'^d)=n-k-\varepsilon_U(t) \,\, \mbox{and} \,\, \varepsilon_{U^d} (T'^d)\geq k-\dim_{\fqm} (T')\geq k-t. \]
From which we get (\ref{eq: lower bound max defect}), as
  \[\varepsilon_{U^d} (n-k-\varepsilon_U(t))\geq \varepsilon_{U^d} (T'^d)\geq k-t.\]

  Now, let $i\in \{1,\dots, s-1\}$ and let $T$ be a $t_i$-dimensional $\fqm$-subspace of $\mathbb{V}(k,q^m)$ such that  $\varepsilon_U(T)=\varepsilon_U(t_i)$. Then by Remark \ref{rem: t_i minimum dimension}, $T=\langle T\cap U\rangle_{\fqm}$ and $T$ is minimal with respect to its defect and hence by $\mathbf 2.$ of Theorem \ref{thm: dual defect} and $\mathbf 2.$ of Theorem \ref{thm: =dual defect} we get 
   \[\dim_{\fqm} (T^d)=n-k-\varepsilon_U(t_i) \,\, \mbox{and} \,\,
 \varepsilon_{U^d} (T^d)=  k-t_i.\]
Suppose that $\varepsilon_{U^d} (n-k-\varepsilon_U(t_i))=  k-t_i+\rho$ and note that, by (\ref{eq: lower bound max defect}),  $\rho \geq 0$. Let $\tilde{T}$ be an $\fqm$-subspace of $\mathbb{V}(n-k,q^m)$ such that $\dim_{\fqm} (\tilde{T})=n-k-\varepsilon_U(t_i)$ and  $\varepsilon_{U^d} (\tilde{T})=  k-t_i+\rho$. Then by Remark \ref{rem: t_i minimum dimension} and $\mathbf 2.$ of Theorem \ref{thm: dual defect} and $\mathbf 2.$ of Theorem \ref{thm: =dual defect}, we have that 
\[\dim_{\fqm} (\tilde{T}^d)=k-\varepsilon_{U^d}(\tilde{T})=t_i-\rho \,\, \mbox{and} \,\,
 \varepsilon_{U} (\tilde{T}^d)= n-k- \dim_{\fqm} (\tilde{T})=\varepsilon_U(t_i).\]
 By the minimality of $t_i$, as pointed out in Remark \ref{rem: t_i minimum dimension}, we get $\rho =0$ and hence we obtain \eqref{eq: max def equal}. 
As a consequence, we also obtain \eqref{eq: correspondence of minimum dimension subspaces}.
Finally, \eqref{eq:sequence max def dual} easily follows by combining  \eqref{eq: max def equal} and  \eqref{eq: correspondence of minimum dimension subspaces}.
  \end{proof}

\begin{remark}
    We underline here that for any $l$ such that $t_i\leq l < t_{i+1}$ for some $i \in [s]$, the value of $\varepsilon_U(l)$ remains the same of $\varepsilon_U(t_i)$. A similar situation occurs for $U^d$.
\end{remark}

\subsection{Delsarte dual of some classes of subspaces}

Our aim is now to study how the Delsarte duality behaves on an $\fq$-subspace $U$ when its sequence of maximum non-zero defects is relatively short.
In this section we will assume that $w_U(H)<\dim_{\fq}(U)-1$ for all the hyperplanes $H$ in $\mathbb{V}(k,q^m)$.
Clearly, the first case to consider is the case in which such a sequence presents only one element, that is $\varepsilon_U(k)=n-k$.
This implies that with respect to the subspace $U$ there are no elements in $\mathcal{L}(\mathbb{V}(k,q^m))\setminus \{\mathbb{V}(k,q^m)\}$ with positive defect, and so $U$ turns out to be scattered with respect to the hyperplanes.
More generally, if $\varepsilon_U(h+1)$, with $0<h<k$, is the first element in the sequence of non-zero maximum defects, then $U$ is an $h$-scattered subspace. In the following we show that the Delsarte dual of an $h$-scattered subspace is $h'$-scattered and the value of $h'$ strongly depends on the maximum of non-trivial defects appearing in the sequence of maximum non-zero defects with respect to $U$.

\begin{theorem}\label{prop:chardualhscatt}
    Let $U$ be an $n$-dimensional $\fq$-subspace of a $k$-dimensional $\fqm$-vector space $\mathbb{V}(k,q^m)$ such that $w_{{U}}(H)< n-1$ for each hyperplane $H$  of $\mathbb{V}(k,q^m)$. Let $U^d$ be a Delsarte dual of $U$.
    Assume that the subspace $U$ is $h$-scattered with  maximum non-trivial defect of $U$ equals to $\varepsilon=\varepsilon_U(t_{s-1})$. We have that $U^d$ is $(n-k-\varepsilon-1)$-scattered.
    
    In particular, if $\varepsilon<n-k-1$ then $U^d$ is scattered.
\end{theorem}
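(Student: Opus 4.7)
The plan is to obtain the conclusion as a direct corollary of Theorem \ref{thm: dual maximum defects} combined with the defect-theoretic characterization of $h$-scatteredness given by Property \ref{prop:(h,r)-evasive-defect}. The heart of the argument is: since the sequence of maximum non-zero defects of $U^d$ is completely determined by that of $U$, we know exactly the smallest dimension at which $U^d$ admits a subspace of positive defect, and the scatteredness parameter of $U^d$ is one less than that dimension.

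First, I would translate the target. By Property \ref{prop:(h,r)-evasive-defect}, proving that $U^d$ is $(n-k-\varepsilon-1)$-scattered is equivalent to showing that
\[
\varepsilon_{U^d}(n-k-\varepsilon-1)\le 0.
\]
The hypothesis $w_U(H)<n-1$ for every hyperplane $H$ of $\mathbb{V}(k,q^m)$ ensures, via Theorem \ref{thm:non degenerate property}, that $t_s=k$; combined with the standing assumption that $\langle U\rangle_{\fqm}=\mathbb{V}(k,q^m)$, this also guarantees that $\dim_{\fq}(U^d)=n$ by Proposition \ref{prop:Gamma_perp}, so the whole duality machinery of Theorem \ref{thm: dual maximum defects} applies.

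Next, I would write out the sequence of maximum non-zero defects of $U$ as
\[
0<\varepsilon_U(t_1)<\cdots<\varepsilon_U(t_{s-1})=\varepsilon<\varepsilon_U(t_s)=n-k,
\]
and invoke formula \eqref{eq:sequence max def dual} of Theorem \ref{thm: dual maximum defects} to obtain the corresponding sequence for $U^d$:
\[
0<\varepsilon_{U^d}(n-k-\varepsilon)=k-t_{s-1}<\cdots<\varepsilon_{U^d}(n-k-\varepsilon_1)=k-t_1<\varepsilon_{U^d}(n-k)=k.
\]
The crucial observation is that $n-k-\varepsilon$ is the smallest dimension at which $\varepsilon_{U^d}$ takes a strictly positive value. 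By the monotonicity of the maximum defect sequence (Property \ref{prop: monotonicity_maxi_defect}) applied to $U^d$, all dimensions $r$ with $1\le r<n-k-\varepsilon$ satisfy $\varepsilon_{U^d}(r)\le 0$. Taking $r=n-k-\varepsilon-1$ yields precisely the inequality displayed above, and hence the $(n-k-\varepsilon-1)$-scatteredness of $U^d$ via Property \ref{prop:(h,r)-evasive-defect}.

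Finally, for the \emph{in particular} clause, if $\varepsilon<n-k-1$ then $n-k-\varepsilon-1\ge 1$, so by the second statement of Property \ref{prop:(h,r)-evasive-defect} (an $(h,r)$-evasive subspace is also $(h',r-(h-h'))$-evasive for every $1\le h'\le h$), $U^d$ is in particular $1$-scattered, i.e.\ scattered. There is no real obstacle here beyond careful bookkeeping of the index reversal $t_i\leftrightarrow n-k-\varepsilon_U(t_i)$ in Theorem \ref{thm: dual maximum defects}; the $h$-scatteredness hypothesis itself is used only to provide the context in which the \emph{maximum non-trivial defect} $\varepsilon$ is a well-defined object, since the final formula $(n-k-\varepsilon-1)$ depends on $\varepsilon$ alone.
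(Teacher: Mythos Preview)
Your proof is correct and follows exactly the same approach as the paper, which simply states that the result follows directly from Theorem~\ref{thm: dual maximum defects}. You have merely unpacked the argument by explicitly invoking the defect characterization of $h$-scatteredness (Property~\ref{prop:(h,r)-evasive-defect}) and the monotonicity (Property~\ref{prop: monotonicity_maxi_defect}) to read off the desired inequality from the sequence \eqref{eq:sequence max def dual}.
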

\begin{proof}
    The statement follows directly from Theorem \ref{thm: dual maximum defects}.
\end{proof}

As a consequence we derive \cite[Theorem 3.3]{csajbok2021generalising}, which holds for $h$-scattered subspaces of maximum dimension.

\begin{corollary}\label{cor:dualmaxhscatt}
    Let $U$ be a maximum $h$-scattered $\fq$-subspace of a $k$-dimensional $\fqm$-vector space $\mathbb{V}(k,q^m)$ with $m\geq h+2$.
    A Delsarte dual of $U$ is a maximum $(m-h-2)$-scattered $\fq$-subspace of a $(\frac{km}{h+1}-k)$-dimensional $\fqm$-vector space $\mathbb{V}(km/(h+1)-k,q^m)$.
\end{corollary}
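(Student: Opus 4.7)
Proof Plan: The plan is to apply Theorem \ref{prop:chardualhscatt} to $U$. Two things must be verified: the hyperplane condition $w_{U}(H)<n-1$, and the exact value of the maximum non-trivial defect $\varepsilon=\varepsilon_{U}(t_{s-1})$.

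First I would check the hyperplane condition. For any hyperplane $H$ of $\mathbb{V}(k,q^m)$, the intersection $U\cap H$ inherits $h$-scatteredness as a subspace of $H\cong \mathbb{V}(k-1,q^m)$, so by the $h$-scattered dimension bound applied inside $H$,
\[
\dim_{\fq}(U\cap H)\leq \frac{(k-1)m}{h+1}=n-\frac{m}{h+1}.
\]
Since $m\geq h+2$, we have $m/(h+1)>1$, hence $w_{U}(H)<n-1$. By Proposition \ref{prop:Gamma_perp}, this implies $\dim_{\fq}(U^d)=n$, and $U^d$ is an $\fq$-subspace of $\mathbb{V}(n-k,q^m)$, where $n-k=k(m-h-1)/(h+1)$. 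A direct calculation gives $n=(n-k)m/(m-h-1)$, which is precisely the maximum $\fq$-dimension allowed for an $(m-h-2)$-scattered subspace in $\mathbb{V}(n-k,q^m)$. It therefore suffices to prove that $U^d$ is $(m-h-2)$-scattered.

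Next I would claim $\varepsilon=n-k-(m-h-1)$. Theorem \ref{prop:chardualhscatt} gives that $U^d$ is $(n-k-\varepsilon-1)$-scattered, so by the $h$-scattered dimension bound applied to $U^d$ itself,
\[
n=\dim_{\fq}(U^d)\leq \frac{(n-k)m}{n-k-\varepsilon},
\]
which rearranges to $\varepsilon\geq n-k-(m-h-1)$. This is the easy direction.

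The main obstacle is the reverse inequality $\varepsilon\leq n-k-(m-h-1)$, equivalent to $w_{U}(H)\leq n-(m-h)$ for every hyperplane $H$, i.e.\ $\dim_{\fq}(W_H)\geq m-h$ where $W_H$ is the image of $U$ under the projection onto the one-dimensional quotient $\mathbb{V}(k,q^m)/H\cong\fqm$. The naive bound from the $h$-scatteredness of $U\cap H$ in $H$ only gives $w_{U}(H)\leq n-m/(h+1)$, which falls short of the target by $h(m-h-1)/(h+1)$. Closing this gap requires a refined argument using the maximality of $\dim_{\fq}(U)=km/(h+1)$: I would assume for contradiction that $\dim_{\fq}(W_H)<m-h$ and try to exhibit an $\fqm$-subspace $T$ of $\mathbb{V}(k,q^m)$ of dimension $h$ with $\dim_{\fq}(T\cap U)\geq h+1$, contradicting $h$-scatteredness. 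The natural candidate $T$ lies transverse to $H$ and is built so that $T\cap H$ already supports an $h$-dimensional $\fq$-intersection with $U\cap H$, together with one extra vector projecting into $W_H$; the smallness of $W_H$ together with the large $\fq$-dimension of $U\cap H$ should force such a configuration to exist. This step essentially recovers \cite[Theorem 3.3]{csajbok2021generalising} within the present framework. Once it is established, Theorem \ref{prop:chardualhscatt} gives that $U^d$ is $(m-h-2)$-scattered, and combined with the dimension count above we conclude that $U^d$ is maximum $(m-h-2)$-scattered in $\mathbb{V}(km/(h+1)-k,q^m)$.
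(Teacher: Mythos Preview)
Your overall plan matches the paper's: verify the hyperplane hypothesis, determine $\varepsilon=\varepsilon_U(t_{s-1})$, and apply Theorem~\ref{prop:chardualhscatt}. The paper, however, does not attempt a self-contained argument for the value of $\varepsilon$: it invokes \cite[Theorem~2.7]{csajbok2021generalising} for the upper bound $w_U(H)\leq n-(m-h)$ on hyperplane weights, and \cite[Theorem~7.1]{zini2021scattered} for the fact that this bound is achieved, giving $\varepsilon_U(k-1)=n-(m-h)-(k-1)$ exactly. Your ``easy direction'' $\varepsilon\geq n-k-(m-h-1)$, obtained by feeding the $(n-k-\varepsilon-1)$-scatteredness of $U^d$ back into the scattered dimension bound, is a genuinely different and rather elegant replacement for the second citation.

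The gap is your ``hard direction''. Your sketch is not a proof: you propose, under the assumption $\dim_{\fq}(W_H)<m-h$, to produce an $h$-dimensional $T$ with $\dim_{\fq}(T\cap U)\geq h+1$, but no mechanism is given, and the suggested configuration is not even dimensionally coherent (if $\dim_{\fqm}(T)=h$ and $T\not\subseteq H$ then $\dim_{\fqm}(T\cap H)=h-1$, so it cannot ``already support an $h$-dimensional $\fq$-intersection with $U\cap H$''). The proof of this bound in \cite{csajbok2021generalising} proceeds by a different route, and you should simply cite it as the paper does; note that the relevant result is \cite[Theorem~2.7]{csajbok2021generalising}, not Theorem~3.3 (which is the Delsarte-dual statement you are ultimately reproving). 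A minor point: in your first paragraph, the $h$-scattered dimension bound applied to $U\cap H$ inside $H$ presupposes $\langle U\cap H\rangle_{\fqm}=H$; the case where $U\cap H$ spans a proper subspace of $H$ needs a separate (easy) argument.
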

\begin{proof}
    By \cite[Theorem 2.7]{csajbok2021generalising}, for any hyperplane $H$ in $\mathbb{V}(k,q^m)$ we have $\dim_{\fq}(H\cap U)\leq \frac{km}{h+1}-m+h$ which is strictly less than $km/(h+1)-1$, as $m\geq h+2$.
    Moreover, by \cite[Theorem 7.1]{zini2021scattered} there exists a hyperplane $H$ of $\mathbb{V}(k,q^m)$ such that $\dim_{\fq}(H\cap U)= \frac{km}{h+1}-m+h$, therefore we have  $\varepsilon_U(k-1)=\frac{mk}{h+1}-m+h-k+1$, i.e. $\varepsilon_U(t_{s-1})=\frac{mk}{h+1}-m+h-k+1$, and applying Theorem \ref{prop:chardualhscatt} we obtain the assertion.
\end{proof}

As already observed, the Delsarte duality preserves the length of the sequence of maximum non-zero defects.
Let us focus on those subspaces presenting only one non-trivial element in such a sequence. We will call these subspaces $1$-\textbf{defect} subspaces.

\begin{proposition}\label{prop:dual1-defect}
    Let $U$ be an $n$-dimensional $\fq$-subspace of a $k$-dimensional $\fqm$-vector space $\mathbb{V}(k,q^m)$ which is a $1$-defect subspace, i.e. its the sequence of maximum non-zero defects is
    \[ 0 < \varepsilon_U(\ell)  <\varepsilon_U(k)=n-k,\]
    for some $\ell \in \{1,\ldots,k-1\}$.
    Then the sequence of maximum non-zero defects of an its Delsarte dual $U^d$ is 
    \[0< \varepsilon_{U^d}(n-k-\varepsilon_U(\ell))=k-\ell <\varepsilon_{U^d}(n-k)=k.
    \]
\end{proposition}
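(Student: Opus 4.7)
The proposition is essentially the specialization of Theorem~\ref{thm: dual maximum defects} to the case where the sequence of maximum non-zero defects has length $s = 2$. The plan is therefore to unpack the hypothesis, verify that the assumptions needed to invoke Theorem~\ref{thm: dual maximum defects} are in force, and then read off the conclusion directly.

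First I would note that, as stated at the beginning of the subsection, we are assuming throughout that $w_U(H) < n-1$ for every hyperplane $H$ of $\mathbb{V}(k,q^m)$; equivalently, by Theorem~\ref{thm:non degenerate property}, the integer $t_s$ of Definition~\ref{def:non-zero max defects} is equal to $k$. Since the assumed sequence of maximum non-zero defects of $U$ ends at $\varepsilon_U(k) = n-k$, this is consistent. The hypothesis that $U$ is a $1$-defect subspace exactly means that the sequence has length $s = 2$, with $t_1 = \ell$, $\varepsilon_1 = \varepsilon_U(\ell)$, $t_2 = k$, and $\varepsilon_2 = n-k$.

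Next I would apply Theorem~\ref{thm: dual maximum defects} directly to this sequence. The theorem gives, in the case $s=2$, the sequence of maximum non-zero defects of $U^d$:
\[
0 < \varepsilon_{U^d}(n-k-\varepsilon_1) = k - t_1 < \varepsilon_{U^d}(n-k) = k,
\]
which, substituting $t_1 = \ell$ and $\varepsilon_1 = \varepsilon_U(\ell)$, is precisely the claimed sequence for $U^d$.

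There is essentially no obstacle here: the entire content has already been established in Theorem~\ref{thm: dual maximum defects}. The only thing to emphasize for the reader is that the length-preservation part of that theorem guarantees that $U^d$ is again a $1$-defect subspace, so the displayed two-term sequence really is the whole sequence of maximum non-zero defects of $U^d$, not merely a fragment of it.
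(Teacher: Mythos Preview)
Your proposal is correct and follows exactly the same approach as the paper, which simply states that the result follows from the definition of defect and Theorem~\ref{thm: dual maximum defects}. Your write-up is in fact more detailed than the paper's one-line proof, correctly spelling out the specialization to $s=2$ and noting the standing hypothesis $w_U(H)<n-1$ required to invoke that theorem.
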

\begin{proof}
    The proof follows from the definition of defect and again by Theorem \ref{thm: dual maximum defects}.
\end{proof}

The {\it club} gives a classical example of $1$-defect subspace. Let $i \in \{2,\ldots,m\}$, a subspace $U$ of $\mathbb{V}(2,q^m)$ is called an $i$-\textbf{club} if there exists a one-dimensional subspace $T \in \mathcal{L}(\mathbb{V}(2,q^m))$ such that
\[ \dim_{\fq}(U\cap T)=i, \]
and all the other one-dimensional subspaces $T'$ are such that
\[ \dim_{\fq}(U\cap T')\leq 1. \]
As a consequence of  Proposition \ref{prop:dual1-defect}, we obtain that the Delsarte dual $U^d$ of an $i$-club $U$ is a $1$-defect subspace with $|\mathcal{E}_{U^d}(\mathbb{V}(n-2,q^m))|=1$.
\begin{proposition}
    Let $U$ be an $n$-dimensional ($n> 3$) $i$-club of $\mathbb{V}(2,q^m)$. We have that its sequence of maximum non-zero defect is
    \[ 0< \varepsilon_U(1)=i-1<\varepsilon_U(2)=n-2, \]
    and $\mathcal{E}_U(\mathbb{V}(2,q^m))=\{T\}$.
    In particular, an $i$-club is a $1$-defect subspace.
    Moreover, the sequence of maximum non-zero defect of $U^d$ is 
    \[ 0< \varepsilon_{U^d}(n-1-i)=1<\varepsilon_{U^d}(n-2)=2, \]
    and $\mathcal{E}_{U^d}(\mathbb{V}(n-2,q^m))=\{T^d\}$.
\end{proposition}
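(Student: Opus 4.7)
The plan is to compute the sequence of maximum non-zero defects of $U$ directly from the definition of an $i$-club, identify the unique proper subspace that is minimal with respect to its positive defect, and then invoke Proposition \ref{prop:dual1-defect} together with the correspondence established in Theorem \ref{thm: dual maximum defects} to transfer the structure to $U^d$.

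First I would verify the non-degeneracy hypothesis needed for the Delsarte duality. In $\mathbb{V}(2, q^m)$ the hyperplanes are exactly the one-dimensional $\fqm$-subspaces, and the $i$-club definition yields $w_U(T) = i$ together with $w_U(T') \leq 1$ for every other one-dimensional subspace $T'$. Since a Delsarte dual is considered, the assumption $w_U(H) < n - 1$ must hold for every hyperplane $H$, which forces $i \leq n - 2$; the remaining bound $1 < n - 1$ is guaranteed by $n > 3$. To compute the defects, $\varepsilon_U(1) = \max_{\dim_{\fqm} T' = 1}(w_U(T') - 1) = i - 1$, attained uniquely by $T$ and strictly positive because $i \geq 2$, while $\varepsilon_U(2) = w_U(\mathbb{V}(2, q^m)) - 2 = n - 2$. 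Since $i \leq n - 2$ gives $i - 1 < n - 2$, the sequence of maximum non-zero defects is the two-term strictly increasing sequence $0 < i - 1 < n - 2$, so that $U$ is a $1$-defect subspace in the sense of Proposition \ref{prop:dual1-defect}.

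To identify $\mathcal{E}_U(\mathbb{V}(2, q^m))$, I would simply enumerate the proper $\fqm$-subspaces. The zero subspace has defect $0$ and is excluded; among the one-dimensional subspaces only $T$ has positive defect, and $T$ is automatically minimal with respect to its defect because its only proper $\fqm$-subspace is $\{\underline 0\}$, which has defect $0$. Hence $\mathcal{E}_U(\mathbb{V}(2, q^m)) = \{T\}$.

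Finally, applying Proposition \ref{prop:dual1-defect} with $k = 2$ and $\ell = 1$ gives $\varepsilon_{U^d}(n - 2 - (i-1)) = 1$ and $\varepsilon_{U^d}(n - 2) = 2$, which is exactly the asserted sequence $0 < \varepsilon_{U^d}(n - 1 - i) = 1 < \varepsilon_{U^d}(n - 2) = 2$. The bijection between subspaces minimal with respect to their positive defect, encoded in \eqref{eq: correspondence of minimum dimension subspaces} of Theorem \ref{thm: dual maximum defects}, then transports $\mathcal{E}_U(\mathbb{V}(2, q^m)) = \{T\}$ to $\mathcal{E}_{U^d}(\mathbb{V}(n - 2, q^m)) = \{T^d\}$. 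The only real obstacle is the careful bookkeeping of the non-degeneracy hypotheses and of the index shifts in the sequence; once these are verified, the statement follows directly from the general machinery developed in the previous subsection.
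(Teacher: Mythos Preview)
Your proposal is correct and follows essentially the same approach as the paper, which simply states the result as a consequence of Proposition~\ref{prop:dual1-defect} without giving an explicit proof. One minor remark: the cleanest citation for the final step, transporting $\mathcal{E}_U(\mathbb{V}(2,q^m))=\{T\}$ to $\mathcal{E}_{U^d}(\mathbb{V}(n-2,q^m))=\{T^d\}$, is the Corollary immediately following Theorem~\ref{thm: =dual defect}, which states directly that $T\mapsto T^d$ is a bijection between $\mathcal{E}_U$ and $\mathcal{E}_{U^d}$; equation~\eqref{eq: correspondence of minimum dimension subspaces} only covers subspaces of dimension $t_i$ realizing the maximum defect, so invoking the Corollary avoids any doubt about whether $\mathcal{E}_{U^d}$ could contain extra elements.
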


\begin{example}
   Applying the previous result to a $2$-club $U$  in $\mathbb{V}(2,q^m)$ of dimension $m\geq 5$, we get that in $\mathbb{V}(m-2,q^m)$  there exists only one hyperplane with defect one with respect to $U^d$, i.e. having weight $m-2$.
    This was proven in \cite[Remark 6.14]{sheekeyVdV} for $m=5$ using MacWilliams identities, whereas our approach managed to avoid their use.
\end{example}

Finally, we show that the Delsarte dual of a decomposable subspace is still a decomposable subspace.  Then we use this result to prove that the $\mathbf{k}$-scattered property is closed under Delsarte duality.

\begin{theorem}
    Let $\mathbf{k}=(k_1,\ldots,k_t)$,  $\mathbf{n}=(n_1,\ldots,n_t)\in\mathbb N^t$ with $k=k_1+\ldots+k_t$,  $n=n_1+\ldots+n_t$  and $k_i< n_i$ for $i\in [t]$. Consider a decomposable $\fq$-subspace $U$ of $\mathbb{V}(k,q^m)$ of dimension $n$  of type $(\mathbf{k}, \mathbf{n})$.
    We have that a Delsarte dual $U^d$ of $U$ is a decomposable subspace in $\mathbb{V}(n-k,q^m)$ of type $(\mathbf{n}-\mathbf{k}, \mathbf{n})$.
\end{theorem}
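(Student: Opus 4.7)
The plan is to realize $U$ via the quotient model of Theorem \ref{thm:projection_subg} and transport the decomposition of $U$ to the ambient subgeometry, then use orthogonality to transport it once more through the Delsarte duality. Write $\mathbb{V}(k,q^m)=\mathbb{V}(n,q^m)/\Gamma$ with $U=(W+\Gamma)/\Gamma$, where $W$ is an $n$-dimensional $\fq$-subgeometry of $\mathbb{V}(n,q^m)$ and $\Gamma$ is an $(n-k)$-dimensional $\fqm$-subspace with $W\cap\Gamma=\{\underline 0\}$. Since the projection $W\to U$ is an $\fq$-isomorphism, the decomposition $U=U_1\oplus\cdots\oplus U_t$ lifts uniquely to an $\fq$-direct sum $W=W_1\oplus\cdots\oplus W_t$ with $\dim_{\fq}(W_i)=n_i$, and taking $\fqm$-spans produces the $\fqm$-direct sum $\mathbb{V}(n,q^m)=W_1^*\oplus\cdots\oplus W_t^*$ with $\dim_{\fqm}(W_i^*)=n_i$.

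The first key step is to see that $\Gamma$ inherits this decomposition. Setting $\Gamma_i:=W_i^*\cap\Gamma$, the equality $F_i=(W_i^*+\Gamma)/\Gamma$ together with $\dim_{\fqm}(F_i)=k_i$ forces $\dim_{\fqm}(\Gamma_i)=n_i-k_i$; the sum $\Gamma_1+\cdots+\Gamma_t$ is direct because the $W_i^*$ are $\fqm$-independent, and a dimension count gives $\Gamma=\Gamma_1\oplus\cdots\oplus\Gamma_t$ with $\Gamma_i\subseteq W_i^*$. Next I would exploit the freedom in the choice of bilinear form by picking $\sigma'$ as the orthogonal sum of non-degenerate reflexive bilinear forms on each $W_i$, so that the $W_i$ are pairwise $\sigma'$-orthogonal; this is legitimate since the Delsarte dual is well defined up to $\mathrm{GL}(n-k,q^m)$-equivalence. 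The $\fqm$-extension $\sigma$ then makes $W_1^*,\ldots,W_t^*$ pairwise $\sigma$-orthogonal, and a direct calculation yields
\[
\Gamma^\perp=\bigoplus_{i=1}^t \Gamma_i^{\perp_i},
\]
where $\Gamma_i^{\perp_i}$ denotes the orthogonal of $\Gamma_i$ computed inside $W_i^*$, a $\fqm$-subspace of dimension $k_i$; in particular $W_i^*\cap\Gamma^\perp=\Gamma_i^{\perp_i}$.

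I would then define the candidate components by
\[
V_i:=\frac{W_i+\Gamma^\perp}{\Gamma^\perp}\qquad\text{and}\qquad G_i:=\frac{W_i^*+\Gamma^\perp}{\Gamma^\perp}.
\]
From $W_i^*\cap\Gamma^\perp=\Gamma_i^{\perp_i}$ I get $\dim_{\fqm}(G_i)=n_i-k_i$, and $\langle V_i\rangle_{\fqm}=G_i$ since the $\fqm$-span commutes with the projection modulo $\Gamma^\perp$. Under the non-degeneracy condition $w_U(H)<n-1$ for every hyperplane $H$ (which by Proposition \ref{prop:Gamma_perp} is equivalent to $W\cap\Gamma^\perp=\{\underline 0\}$, i.e.\ $t_s=k$, and by item 2 of Proposition \ref{prop:decomposable} follows from the minimality of each component $F_i$ with respect to its defect), the restriction to $W_i$ of the projection $\mathbb{V}(n,q^m)\to\mathbb{V}(n-k,q^m)$ is injective, giving $\dim_{\fq}(V_i)=n_i$. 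Summing, $W=\bigoplus_i W_i$ and $\mathbb{V}(n,q^m)=\bigoplus_i W_i^*$ project to $U^d=V_1+\cdots+V_t$ and $\mathbb{V}(n-k,q^m)=G_1+\cdots+G_t$, and the dimension equalities $\sum n_i=n=\dim_{\fq}(U^d)$ and $\sum(n_i-k_i)=n-k$ force both sums to be direct. This exhibits $U^d$ as decomposable of type $(\mathbf{n}-\mathbf{k},\mathbf{n})$.

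The main obstacle will be the bookkeeping linking the three operators $*$, $\perp$ and $\perp'$ (controlled by \eqref{eq:*perp}) with the tailored choice of $\sigma'$, and in particular the verification that $\Gamma$ itself splits as $\bigoplus_i(W_i^*\cap\Gamma)$ with the correct dimensions; it is here that it matters crucially that $F_i$ is the $\fqm$-span of the component $U_i$, and not just an arbitrary $k_i$-dimensional subspace of weight $n_i$.
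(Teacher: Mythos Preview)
Your proof is correct and takes a genuinely different route from the paper. The paper works with an \emph{arbitrary} bilinear form $\sigma'$, introduces the ``dual'' decomposition $\bar{S}_i=(\sum_{j\ne i}S_j)^{\perp'}$ of $W$, and then invokes the defect machinery: it observes that the subspaces $F_i'=\sum_{j\ne i}F_j$ are minimal with respect to their defect (using item~3 of Proposition~\ref{prop:decomposable}, which needs the standing hypothesis $t_s=k$ of the section) and applies Theorems~\ref{thm: dual defect} and~\ref{thm: =dual defect} to compute $\dim_{\fqm}((F_i')^d)$, $w_{U^d}((F_i')^d)$ and hence show that the $(F_i')^d=\frac{\bar{S}_i^*+\Gamma^\perp}{\Gamma^\perp}$ form the desired decomposition of $\mathbb{V}(n-k,q^m)$.

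Your approach bypasses this machinery entirely by first proving the key structural fact $\Gamma=\bigoplus_i(W_i^*\cap\Gamma)$ (a simple dimension count once one notes $\dim_{\fqm}(W_i^*\cap\Gamma)=n_i-k_i$), and then tailoring $\sigma'$ to the decomposition so that $\Gamma^\perp$ splits in the same way; after that, the decomposability of $U^d$ drops out by pure linear algebra, with no appeal to Theorems~\ref{thm: dual defect}--\ref{thm: =dual defect}. This is more elementary and self-contained. With your adapted $\sigma'$ one has $\bar{S}_i=W_i$, so the two proofs in fact produce the same components. The paper's route, on the other hand, identifies the components of $U^d$ intrinsically as $(F_i')^d$ for any $\sigma'$; this identification is exactly what is reused in the proof of the next theorem on $\mathbf{k}$-scattered subspaces (Theorem~\ref{th:dual k-bold scattered}). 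One small point worth making explicit in your write-up: since a different choice of $\sigma'$ yields a $\mathrm{GL}(n-k,q^m)$-equivalent dual, and decomposability of a given type is $\mathrm{GL}$-invariant, your special choice indeed proves the statement for \emph{every} Delsarte dual.
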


\begin{proof}
   Consider the representation of $U$ in the quotient model, for which we have $\mathbb{V}(k,q^m)=\frac{\mathbb{V}(n,q^m)}{\Gamma}$ and  $U=\frac{W+\Gamma}{\Gamma}$, where $W$ is a $n$-dimensional $\fq$-subgeometry of $\mathbb{V}(n,q^m)$. Since $U$ is decomposable of type $(\mathbf{k},\mathbf{n})$, we may write
   \[U=U_1\oplus \ldots \oplus U_t=\frac{S_1+\Gamma}{\Gamma}\oplus \cdots \oplus \frac{S_t+\Gamma}{\Gamma}\]
   and 
   \[\mathbb{V}(k,q^m)=F_1\oplus  \cdots \oplus F_t=\langle U_1 \rangle_{\fqm}\oplus \cdots \oplus \langle U_t\rangle_{\fqm}=\frac{S_1^*+\Gamma}{\Gamma}\oplus \cdots \oplus \frac{S_t^*+\Gamma}{\Gamma},\]
   where $S_i$ is an $n_i$-dimensional $\fq$-subspace of $W$.
   Also, since $W\cap \Gamma=\{\underline 0\}$, we have
   \[W=S_1\oplus \cdots \oplus S_t.\]
   Then it is easy to check that if $H_i=\sum_{j=1, j\neq i}^{t}S_j$, then 
   \begin{equation} \label{eq:W}
       W=\bar{S}_1\oplus \cdots \oplus \bar{S}_t
   \end{equation}
   where $\bar{S}_i=(H_i)^{\perp'}=\cap_{j=1, j\neq i}^{t} S_j^{\perp'}$ and $\dim_{\fq}(\bar{S}_i)=n_i$. 
   Also, if $I=\{i_1,i_2,\dots, i_s\} \subseteq [t]$, since $\sum_{j=1}^s S_{i_j}=\cap_{i\in [t]\setminus I} H_i $, we have  
   \begin{equation} \label{Eq:S partial sum}
      \cap_{j=1}^{s}S_{i_j} ^{\perp'}=\sum_{i\in [t]\setminus I}^{s}\bar{S}_i.
   \end{equation}
   Let $F'_i=\sum_{j=1, j\neq i}^{t}F_j$ for $i\in [t]$ and note that 
   \[F'_i=\sum_{j=1, j\neq i}^{t} \frac{S_j^*+\Gamma}{\Gamma}=\frac{(\sum_{j=1, j\neq i}^{t}S_j)^*+\Gamma}{\Gamma}=\frac{H_i^*+\Gamma}{\Gamma}.\]
   Clearly, the subspaces $F'_i$ have positive defect and, by {\it 3.} of Proposition \ref{prop:decomposable}, they are  minimal with respect to their defects. Hence, by Theorems \ref{thm: dual defect} and  \ref{thm: =dual defect} we have that 
   \[(F_i')^d=\frac{(H_i^{\perp'})^*+\Gamma^{\perp}}{\Gamma^{\perp}}=\frac{\bar{S}_i^*+\Gamma^{\perp}}{\Gamma^{\perp}},\]
   \[\dim_{\fqm}((F_i')^d)=n-k-\varepsilon_U(F_i')=n_i-k_i,\] 
   \[ w_{U^d}((F_i')^d)=n-w_U(F_i')=n_i,\] 
   \[\varepsilon_{U^d}((F_i')^d)=k-\dim_{\fqm}(F_i')=k_i,\] 
   and
   \[((F_i')^d)^d=F_i'.\]
   Also, $(F_i')^d$ is minimal with respect to its defect. By Equation (\ref{eq:W}) we get
   
   \begin{equation} \label{eq:decomposition_Ud}
       U^d=\frac{W+\Gamma^{\perp}}{\Gamma^{\perp}}=\frac{\bar{S}_1+\Gamma^{\perp}}{\Gamma^{\perp}}\oplus\cdots\oplus \frac{\bar{S}_t+\Gamma^{\perp}}{\Gamma^{\perp}}
   \end{equation}
   and hence, since $(U^d)^*=\mathbb{V}(n-k,q^m)$, we have
   \[\mathbb{V}(n-k,q^m)=(F_1')^d+\cdots +(F_t')^d.\]
   
and now comparing the dimensions we get
\begin{equation} \label{eq:decomposition_Vd}
\mathbb{V}(n-k,q^m)=(F_1')^d\oplus\cdots \oplus (F_t')^d,
\end{equation}
i.e. $U^d$ is decomposable of type $(\mathbf{n}-\mathbf{k}, \mathbf{n})$.
\end{proof}

\begin{remark}
     In \cite{bartoli2024Exceptional_sequences}, the authors proved that the Delsarte dual of decomposable subspaces in $\fqm^k$ having dimension a multiple of $m$ is again a decomposable subspace. Therefore, the previous result can be seen as an extension of \cite[Proposition 2.8]{bartoli2024Exceptional_sequences}.
\end{remark}

Now, we are able to prove that the property of being $\mathbf{k}$-scattered with respect to the hyperplanes is preserved under the Delsarte duality.

\begin{theorem} \label{th:dual k-bold scattered}
 Let $\mathbf{k}=(k_1,\ldots,k_t)\in\mathbb N^t$ with $k=k_1+\ldots+k_t$ and let $U$ be an $\fq$-subspace  $\mathbf{k}$-scattered with respect to the hyperplanes in $\mathbb{V}(k,q^m)$ of type $\mathbf{n}=(n_1,\dots,n_t)$.
 Then the Delsarte dual $U^d$ of $U$ is $(\mathbf{n}-\mathbf{k})$-scattered with respect to the hyperplanes in $\mathbb{V}(n-k,q^m)$ of type $\mathbf{n}$.    
\end{theorem}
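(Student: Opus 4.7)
The plan is to apply the characterization in Theorem \ref{thm:caracK-scattered} in both directions, transferring the property from $U$ to $U^d$ through Delsarte duality. The previous theorem already guarantees that $U^d$ is decomposable of type $(\mathbf{n}-\mathbf{k},\mathbf{n})$, so the task reduces to identifying the minimal positive-defect subspaces of $\mathbb{V}(n-k,q^m)$ with respect to $U^d$ and checking that they are exactly sums of the components of the decomposition of $U^d$.

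First I would pin down the components of $U^d$ together with a formula relating their partial sums to partial sums of the components of $U$. From the proof of the previous theorem the $i$-th component of $U^d$ is $G_i=(F_i')^d$, where $F_i'=\sum_{j\neq i}F_j$. Using the identity \eqref{Eq:S partial sum} together with the commutation relation \eqref{eq:*perp}, one checks directly that
\[
\sum_{i\in I} G_i \;=\; \Bigl(\sum_{j\in [t]\setminus I} F_j\Bigr)^{d}
\]
for every $I\subseteq[t]$. In other words, sums of components of $U^d$ are precisely the Delsarte duals of sums of components of $U$, under the natural pairing $I\leftrightarrow[t]\setminus I$. Moreover, since $U$ is $\mathbf{k}$-scattered, the components $F_j$ are minimal with respect to their defect, so by parts 2.\ and 3.\ of Proposition \ref{prop:decomposable} the same property is inherited by every sum $\sum_{j\in J}F_j$; combining with Theorem \ref{thm: =dual defect}, also every $\sum_{i\in I}G_i$ is minimal with respect to its defect with respect to $U^d$.

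Next I would apply Theorem \ref{thm:caracK-scattered} to $U$ to conclude that the proper minimal positive-defect subspaces of $\mathbb{V}(k,q^m)$ with respect to $U$ are exactly the proper sums $\sum_{j\in J} F_j$ with $\emptyset\neq J\subsetneq [t]$. These all lie in $\mathcal{E}_U(\mathbb{V}(k,q^m))$, and the one-to-one correspondence $\varphi$ established earlier sends them bijectively onto all proper elements of $\mathcal{E}_{U^d}(\mathbb{V}(n-k,q^m))$. By the display formula above, these images are precisely the proper nonempty sums $\sum_{i\in I} G_i$. Adjoining the whole space $\mathbb{V}(n-k,q^m)=\sum_{i=1}^{t}G_i$, which is itself minimal with (maximal) defect $k$ and is obviously a sum of components, one captures all minimal positive-defect subspaces of $U^d$. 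Theorem \ref{thm:caracK-scattered} applied in the reverse direction then yields that $U^d$ is $(\mathbf{n}-\mathbf{k})$-scattered with respect to the hyperplanes in $\mathbb{V}(n-k,q^m)$ of type $\mathbf{n}$.

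The principal subtlety I anticipate is the bookkeeping of the pairing $I\leftrightarrow[t]\setminus I$: a sum of $|J|$ components on the $U$-side corresponds to a sum of $t-|J|$ components on the $U^d$-side, and one must treat separately the extreme cases, where the whole space on one side is paired with the zero subspace on the other (the latter lies outside $\mathcal{E}$). Once the duality formula and these boundary cases are checked, the rest of the argument is a direct translation between the two instances of Theorem \ref{thm:caracK-scattered} via the correspondence $\varphi$.
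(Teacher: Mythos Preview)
Your proposal is correct and follows essentially the same approach as the paper: both invoke the previous theorem for the decomposability of $U^d$, use Theorem \ref{thm:caracK-scattered} on $U$ to identify its minimal positive-defect subspaces as partial sums $\sum_{j\in J}F_j$, transfer this through Delsarte duality (the paper via Theorem \ref{thm: =dual defect} directly, you via the bijection $\varphi$), and then use the identity coming from \eqref{Eq:S partial sum} to recognise the resulting subspaces as partial sums of the components $G_i=(F_i')^d$ of $U^d$, before applying Theorem \ref{thm:caracK-scattered} in reverse. Your explicit handling of the pairing $I\leftrightarrow[t]\setminus I$ and of the boundary case (the whole space) is a cosmetic elaboration of the same argument; note that for the characterization you do not actually need to \emph{prove} minimality of $\mathbb{V}(n-k,q^m)$, only that if it is minimal then it is a sum of components, which is trivial.
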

\begin{proof}
From the previous theorem $U^d$  is decomposable of type $(\mathbf{n}-\mathbf{k}, \mathbf{n})$ in $\mathbb{V}(n-k,q^m)$ with decompositions (\ref{eq:decomposition_Ud}) and (\ref{eq:decomposition_Vd}).
Now, by Theorem \ref{thm: =dual defect} and by Theorem \ref{thm:caracK-scattered}, a subspace $T$ of $\mathbb{V}(n-k,q^m)$ with positive defect with respect to $U^d$ and minimal with respect to its defect is the dual of a subspace of $\mathbb{V}(k,q^m)$ which is  sum of some subspaces $F_i$, i.e. there exists $I=\{i_1,\ldots,i_h\}\subseteq\{1,\ldots,t\}$ such that
\[T=\left(\sum_{j=1}^{h} F_{i_j}\right)^d=\left(\frac{\sum_{j=1}^{h}S_{i_j}^*+\Gamma}{\Gamma}\right)^d=\frac{(\cap_{j=1}^{h}S_{i_j}^{\perp'})^*+\Gamma}{\Gamma}\]
and by (\ref{Eq:S partial sum}) we can write
\[T=\frac{(\sum_{i\in [t]\setminus I }^{}\bar{S}_{i}^*)+\Gamma}{\Gamma}=\sum_{i\in [t]\setminus I}^{} F_i',\]
i.e. $T$ is the sum of some components $F'_i$, where $\bar{S}_i=\cap_{j=1, j\neq i}^{t} S_j^{\perp'}$ for any $i$.
Hence by Theorem \ref{thm:caracK-scattered}, we obtain that $U^d$ is $(\mathbf{n}-\mathbf{k})$-scattered with respect to the hyperplanes in $\mathbb{V}(n-k,q^m)$ of type $(n_1,\dots,n_t)$. 
\end{proof}

\section{Rank-metric codes and $q$-systems: a new framework}

In this section, after recalling some basics of rank-metric codes, we will propose a new framework for the geometric correspondence between rank-metric codes and $q$-systems. This will help us to use the results of the previous section to prove, as a byproduct, some previously known results and to detect classes of codes that are closed under duality.

\subsection{Preliminaries on rank-metric codes and systems}

Rank metric codes were introduced by Delsarte \cite{de78} in 1978 as subsets of matrices and they have been intensively investigated in recent years because of their applications; we refer to \cite{sheekeysurvey,polverino2020connections}.
In this section we will be interested in rank metric codes in $\F_{q^m}^n$.
The \textbf{rank} (weight) $w(v)$ of a vector $v=(v_1,\ldots,v_n) \in \F_{q^m}^n$ is defined as $w(v)=\dim_{\fq} (\langle v_1,\ldots, v_n\rangle_{\fq})$. 

A \textbf{(linear vector) rank metric code} $\C $ is an $\F_{q^m}$-subspace of $\F_{q^m}^n$ endowed with the rank distance, where such a distance is defined as $d(x,y)=w(x-y)$, where $x, y \in \F_{q^m}^n$. 
For details and applications we refer to \cite{bartz2022rank,gorla2018codes}.

Let $\C \subseteq \F_{q^m}^n$ be a linear rank metric code. We will write that $\C$ is an $[n,k,d]_{q^m/q}$ code (or $[n,k]_{q^m/q}$ code) if $k=\dim_{\F_{q^m}}(\C)$ and $d$ is its minimum distance, that is $d=\min\{d(x,y) \colon x, y \in \C, x \neq y  \}$.
Moreover, for any $i \in \{1,\ldots,n\}$, we define $A_i(\C)$ the number of codewords in $\C$ having weight $i$.

By the classification of $\F_{q^m}$-linear isometry of $\F_{q^m}^n$ (see \cite{berger2003isometries}), we say that two rank metric codes $\C,\C' \subseteq \F_{q^m}^n$ are \textbf{(linearly) equivalent} if and only if there exists a matrix $A \in \mathrm{GL}(n,q)$ such that
$\C'=\C A=\{vA : v \in \C\}$. 
The codes we will consider are \textbf{non-degenerate}, i.e. those for which the columns of any generator matrix of $\C$ are $\fq$-linearly independent. 

For rank-metric codes a Singleton-like bound holds.

\begin{theorem}(see \cite{de78}) \label{th:singletonrank}
    Let $\C \subseteq \F_{q^m}^n$ be an $[n,k,d]_{q^m/q}$ code.
Then 
\begin{equation}\label{eq:boundgen}
d \leq \min\{n-k+1,m-km/n+1\}.
\end{equation}
\end{theorem}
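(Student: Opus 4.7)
The plan is to prove the two bounds separately, since they correspond to orthogonal "directions" of the problem.

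For the first bound $d\leq n-k+1$, the key observation is that the rank weight is dominated by the Hamming weight: for any $v=(v_1,\ldots,v_n)\in\F_{q^m}^n$ one has
\[
w(v)=\dim_{\fq}\langle v_1,\ldots,v_n\rangle_{\fq}\leq |\{i:v_i\neq 0\}|,
\]
so $\C$ has minimum Hamming distance at least $d$, and the classical Singleton bound for $\F_{q^m}$-linear block codes gives $d\leq n-k+1$. Equivalently, projecting $\C$ onto any $n-d+1$ coordinates is injective (the kernel would contain a codeword of rank weight at most $d-1$), yielding $k\leq n-d+1$.

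For the second bound $d\leq m-km/n+1$, I would pass to the matrix model. Fixing any $\fq$-basis of $\fqm$ produces an $\fq$-linear isometry $\F_{q^m}^n \to \F_q^{m\times n}$ that sends the rank weight to the matrix rank. Under it, $\C$ corresponds to an $\fq$-subspace $\mathcal{M}\subseteq \F_q^{m\times n}$ of $\fq$-dimension $mk$ and minimum rank distance $d$; note $d\leq\min(m,n)\leq m$ automatically, so $m-d+1\geq 1$. Now consider the $\fq$-linear projection
\[
\pi:\F_q^{m\times n}\longrightarrow \F_q^{(m-d+1)\times n}
\]
that keeps only the first $m-d+1$ rows of a matrix. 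Any $M\in\ker\pi$ has at most $d-1$ nonzero rows, hence rank at most $d-1$; if $M\in\mathcal{M}$, then by the minimum distance condition $M=0$. Thus $\pi|_{\mathcal{M}}$ is injective, which forces
\[
mk=\dim_{\fq}\mathcal{M}\leq \dim_{\fq}\F_q^{(m-d+1)\times n}=n(m-d+1),
\]
and rearranging gives $d\leq m-km/n+1$.

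There is no real obstacle here: both bounds reduce to clean dimension counts. The only care needed is in the matrix representation for the second part, where one must check that fixing a basis of $\fqm/\fq$ genuinely turns the rank weight into the matrix rank, a standard fact. Combining the two inequalities yields the claimed $d\leq\min\{n-k+1,\,m-km/n+1\}$.
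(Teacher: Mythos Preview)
Your argument is correct. Both bounds are derived by clean projection/dimension counts: the first via the Hamming-dominates-rank inequality together with the classical Singleton bound, the second via the matrix model and projection onto $m-d+1$ rows, giving $mk\le n(m-d+1)$.

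Note, however, that the paper does not actually prove this statement: Theorem~\ref{th:singletonrank} is merely quoted with the attribution ``(see \cite{de78})'' and no argument is given. So there is no proof in the paper to compare against. Your approach is the standard one found in the literature on rank-metric codes (essentially Delsarte's original argument in the matrix setting, or its specialization to the $\F_{q^m}$-linear case).
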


An $[n,k,d]_{q^m/q}$ code is called \textbf{Maximum Rank Distance code} (or shortly \textbf{MRD code}) if its parameters attain the equality in the Singleton-like bound \eqref{eq:boundgen}.
When $n>m$ an $n$ does not divide $mk$, \eqref{eq:boundgen} does not provide an integer, and therefore an MRD code cannot exist. Hence, when $n>m$ and $mk$ is not a multiple of $n$, in \cite{marino2023evasive} it has been defined an $\fqm$-linear \textbf{quasi-MRD} as an $[n,k]_{q^m/q}$ code with minimum distance $d =m- \lceil km/n \rceil +1$ (see \cite{de2018weight} for rank metric codes in $\F_q^{n\times m}$ with $n\leq m$). We say that $\C$ is \textbf{dually quasi-MRD} if and only if $\C$ and $\C^\perp$ are quasi-MRD codes.

The geometric counterpart of a rank-metric code is the $q$-system. 
An $[n,k,d]_{q^m/q}$ \textbf{system} $U$ is an $\F_q$-subspace of $\F_{q^m}^k$ of dimension $n$, such that
$ \langle U \rangle_{\F_{q^m}}=\F_{q^m}^k$ and
$$ d=n-\max\left\{\dim_{\F_q}(U\cap H) \mid H \textnormal{ is an }\F_{q^m}\textnormal{-hyperplane of } \F_{q^m}^k\right\}.$$
Moreover, two $[n,k,d]_{q^m/q}$ systems $U$ and $U'$ are \textbf{equivalent} if there exists an $\F_{q^m}$-isomorphism $\varphi\in\mathrm{GL}(k,q^m)$ such that
$$ \varphi(U) = U'.$$

Thanks to the following result, we can construct a system starting from a non-degenerate rank-metric code and conversely.

\begin{theorem}(\cite{Randrianarisoa2020ageometric,alfarano2021linearcutting}) \label{th:connection}
Let $\C$ be a non-degenerate $[n,k,d]_{q^m/q}$ rank-metric code and let $G$ be a generator matrix.
Let $U \subseteq \F_{q^m}^k$ be the $\F_q$-span of the columns of $G$.
The rank weight of an element $x G \in \C$, with $x \in \F_{q^m}^k$ is
\begin{equation}\label{eq:relweight}
w(x G) = n - \dim_{\fq}(U \cap x^{\perp}),\end{equation}
where $x^{\perp}=\{y \in \F_{q^m}^k \colon <x, y>=0\}.$ In particular,
\begin{equation} \label{eq:distancedesign}
d=n - \max\left\{ \dim_{\fq}(U \cap H)  \colon H\mbox{ is an } \F_{q^m}\mbox{-hyperplane of }\F_{q^m}^k  \right\}.
\end{equation}
\end{theorem}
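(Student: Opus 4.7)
The plan is to translate the rank weight of a codeword into a dimension statement about the geometry of $U$ via a suitable evaluation map, and then apply rank-nullity.

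First I would fix notation: write $G=(g_1\mid g_2\mid\cdots\mid g_n)$ with columns $g_j\in\F_{q^m}^k$, so that $U=\langle g_1,\ldots,g_n\rangle_{\fq}$ and the codeword associated to $x\in\F_{q^m}^k$ is $xG=(\langle x,g_1\rangle,\ldots,\langle x,g_n\rangle)$, where $\langle\cdot,\cdot\rangle$ denotes the standard bilinear form on $\F_{q^m}^k$. Because the columns of $G$ are $\fq$-linearly independent, the map $x\mapsto xG$ is injective on $\F_{q^m}^k$; in particular $\dim_{\fqm}\C=k$ and every codeword arises uniquely as some $xG$.

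Next, for fixed $x$ I would introduce the $\fqm$-linear (hence $\fq$-linear) functional $\phi_x\colon \F_{q^m}^k\to \F_{q^m}$, $u\mapsto\langle x,u\rangle$, and restrict it to the $\fq$-subspace $U$. By construction the image of $\phi_x|_U$ is exactly the $\fq$-span of the coordinates of $xG$, so
\[
\dim_{\fq}\bigl(\phi_x(U)\bigr)=\dim_{\fq}\langle (xG)_1,\ldots,(xG)_n\rangle_{\fq}=w(xG).
\]
On the other hand, $\ker(\phi_x|_U)=U\cap x^\perp$ where $x^\perp$ is the $\fqm$-hyperplane of $\F_{q^m}^k$ defined by $x$ (or the whole space if $x=0$). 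Applying the rank-nullity formula to $\phi_x|_U$, and using $\dim_{\fq}(U)=n$, gives the identity
\[
w(xG)=n-\dim_{\fq}(U\cap x^\perp),
\]
which is the first assertion.

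Finally, the formula for $d$ follows by minimizing $w(xG)$ over nonzero codewords: since $xG\neq 0$ whenever $x\neq 0$, and since $x\mapsto x^\perp$ realizes every $\fqm$-hyperplane of $\F_{q^m}^k$ as $x$ ranges over the nonzero vectors (with each hyperplane hit by the scalar multiples of a single $x$),
\[
d=\min_{x\neq 0}\bigl(n-\dim_{\fq}(U\cap x^\perp)\bigr)=n-\max\bigl\{\dim_{\fq}(U\cap H)\colon H\text{ an $\fqm$-hyperplane of }\F_{q^m}^k\bigr\}.
\]
There is no real obstacle here; the only subtle point is making sure non-degeneracy of $\C$ is used precisely where it is needed, namely to guarantee both $\dim_{\fq}U=n$ and the injectivity of $x\mapsto xG$, so that the minimum over codewords becomes a maximum over nonzero $x$ (equivalently, over all $\fqm$-hyperplanes).
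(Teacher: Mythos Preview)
The paper does not include its own proof of this theorem; it is stated with citations to \cite{Randrianarisoa2020ageometric,alfarano2021linearcutting} and used as background. Your argument via the evaluation map $\phi_x$ and rank--nullity is the standard one and is correct. One minor point: the injectivity of $x\mapsto xG$ follows from $G$ having $\F_{q^m}$-rank $k$ (because it is a generator matrix), not from the $\fq$-linear independence of its columns; non-degeneracy is used only to ensure $\dim_{\fq}U=n$.
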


The above result allows us to give a one-to-one correspondence between equivalence classes of non-degenerate $[n,k,d]_{q^m/q}$ codes and equivalence classes of $[n,k,d]_{q^m/q}$ systems, see \cite{Randrianarisoa2020ageometric}.
The system $U$ and the code $\C$ as in Theorem \ref{th:connection} are said to be \textbf{associated} and a system associated with a code $\C$ will usually be written as $U_{\C}$.

An essential concept is the rank support of a codeword.
Let $\Gamma=(\gamma_1,\ldots,\gamma_m)$ be an ordered $\fq$-basis of $\F_{q^m}$. For any vector $x=(x_1, \ldots ,x_n) \in \F_{q^m}^n$ define the matrix $\Gamma(x)\in \F_{q}^{n \times m}$, where
$$x_{i} = \sum_{j=1}^m \Gamma (x)_{ij}\gamma_j, \qquad \mbox{ for all } i \in \{1,\ldots,n\},$$
that is $\Gamma(x)$ is the matrix expansion of the vector $x$ with respect to the $\fq$-basis $\Gamma$ of $\F_{q^m}$ and this clearly preserves its rank, i.e. $w(x)=\mathrm{rk}(\Gamma(x))$.
The \textbf{rank support} of $x$ is defined as the column span of $\Gamma(x)$:
$$\mathrm{supp}(x)=\mathrm{colsp}(\Gamma(x)) \subseteq \fq^n.$$

As shown in \cite[Proposition 2.1]{alfarano2021linearcutting}, the support of a vector is independent of $\Gamma$, allowing us to talk about the support of a word without reference to $\Gamma$.
Let $\C$ be an $[n,k]_{q^m/q}$ code, we define the \textbf{support} of $\C$ as
\[ \mathrm{supp}(\C)=\sum_{x \in \C} \mathrm{supp}(x). \]
The support of the code $\C$ can be computed directly on $\C$, without using a fixed order basis.

\begin{theorem}\label{thm:traceandsupport}(\cite[Theorem 16]{jurrius2017defining})
Let $\C$ be a $[n,k,d]_{q^m/q}$ rank-metric code, we have
\[ \mathrm{supp}(\C)=\mathrm{Tr}_{q^m/q}(\C), \]
where
\[ \mathrm{Tr}_{q^m/q}(\C)=\{ (\mathrm{Tr}_{q^m/q}(c_1),\ldots,\mathrm{Tr}_{q^m/q}(c_n)) \colon (c_1,\ldots,c_n) \in \C \}. \]
\end{theorem}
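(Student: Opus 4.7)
The plan is to establish the equality $\supp(\C) = \Tr_{q^m/q}(\C)$ by two inclusions, using the standard dual-basis trick with respect to the trace form. Note first that both sides are $\fq$-subspaces of $\fq^n$: the right-hand side because $\C$ is $\fq$-linear and $\Tr_{q^m/q}$ is $\fq$-linear (applied componentwise), and the left-hand side by construction. Hence it suffices to verify each containment on generators.

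For the inclusion $\Tr_{q^m/q}(\C) \subseteq \supp(\C)$, I would fix $c \in \C$ and expand $c_i = \sum_{j=1}^m \Gamma(c)_{ij}\,\gamma_j$. Applying the trace componentwise and using $\fq$-linearity gives
\[
\bigl(\Tr_{q^m/q}(c_1), \ldots, \Tr_{q^m/q}(c_n)\bigr) = \sum_{j=1}^m \Tr_{q^m/q}(\gamma_j)\cdot\bigl(\Gamma(c)_{1j}, \ldots, \Gamma(c)_{nj}\bigr),
\]
which exhibits the trace vector as an $\fq$-linear combination of the columns of $\Gamma(c)$. Thus it lies in $\supp(c) \subseteq \supp(\C)$.

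For the reverse inclusion, I would introduce the dual basis $\Gamma^* = (\gamma_1^*, \ldots, \gamma_m^*)$ determined by $\Tr_{q^m/q}(\gamma_i \gamma_j^*) = \delta_{ij}$, which exists because the trace form is non-degenerate. The key identity is then $\Gamma(c)_{ij} = \Tr_{q^m/q}(c_i \gamma_j^*)$, so the $j$-th column of $\Gamma(c)$ equals
\[
\bigl(\Tr_{q^m/q}(\gamma_j^* c_1), \ldots, \Tr_{q^m/q}(\gamma_j^* c_n)\bigr) = \Tr_{q^m/q}(\gamma_j^* c).
\]
Since $\C$ is $\fqm$-linear, the scaled codeword $\gamma_j^* c$ still belongs to $\C$, so every column of $\Gamma(c)$ lies in $\Tr_{q^m/q}(\C)$. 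Taking $\fq$-spans and summing over $c \in \C$ yields $\supp(\C) \subseteq \Tr_{q^m/q}(\C)$.

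I do not anticipate serious obstacles: the only ingredient beyond direct computation is the existence of the dual basis for the trace form, which is classical. The mild conceptual point worth noting is that the forward inclusion would hold even for $\fq$-linear codes, whereas the reverse inclusion genuinely uses the $\fqm$-linearity of $\C$ when multiplying $c$ by $\gamma_j^* \in \fqm$; this explains why the clean identification $\supp(\C) = \Tr_{q^m/q}(\C)$ is specifically a statement about $\fqm$-linear rank-metric codes.
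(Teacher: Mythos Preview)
The paper does not supply its own proof of this statement; it is quoted verbatim as \cite[Theorem 16]{jurrius2017defining} without argument. Your proof is correct and is the standard one: the dual-basis identity $\Gamma(c)_{ij}=\Tr_{q^m/q}(c_i\gamma_j^*)$ recovers each column of $\Gamma(c)$ as the componentwise trace of the $\fqm$-multiple $\gamma_j^*c\in\C$, and the forward inclusion is the immediate linear-combination computation you wrote. Your closing remark that the reverse inclusion genuinely requires $\fqm$-linearity (while the forward one does not) is accurate and worth keeping.
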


For more details, refer to \cite{martinez2017relative}.

Generalized rank weights have been defined in various ways, as seen in \cite{jurrius2017defining}. This paper focuses on the definition provided in \cite{Randrianarisoa2020ageometric}, particularly the equivalent version found in \cite[Theorem 3.14]{alfarano2021linearcutting}, which is closely related to systems.
Let $\C$ be a non-degenerate $[n,k,d]_{q^m/q}$ code and let $U_{\C}$ be an associated system.
For any $r \in \{1,\ldots,k\}$, the \textbf{$r$-th generalized rank weight} is
\begin{equation}\label{eq:defgenrankweight}
d_r(\C)=n - \max\left\{ \dim_{\fq}(U_{\C} \cap T)  \colon T\mbox{ is an } \F_{q^m}\mbox{-subspace of codim. $r$ of  }\F_{q^m}^k  \right\}.
\end{equation}

Note that when $r=1$, in the above definition we obtain the minimum distance.

Jurrius and Pellikaan in \cite[Section 5]{jurrius2017defining} provided an alternative definition of the $r$-th generalized
rank weight. Combining such a result with Theorem \ref{thm:traceandsupport} we get the following.

\begin{theorem} \label{thm:generalizedweightsminsupports}(\cite[Corollary 17,Theorem 21]{jurrius2017defining})
     Let $\mathcal{C}$ be an $[n,k,d]_{q^m/q}$ rank-metric code and let $1\leq r\leq k$. Then 
        \begin{align*}
            d_r(\mathcal{C})&=\min\left\{\mathrm{dim}_{\fq}(\mathrm{supp}(\mathcal{D}))\colon\mathcal{D}\text{ is a linear subcode of }\mathcal{C}\text{ of dimension }r\right\}\\
            &=\min\left\{\mathrm{dim}_{\fq}(\mathrm{Tr}_{q^m/q}(\mathcal{D}))\colon\mathcal{D}\text{ is a linear subcode of }\mathcal{C}\text{ of dimension }r\right\}.
        \end{align*}
\end{theorem}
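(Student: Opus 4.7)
The plan is to prove the equality between $d_r(\mathcal{C})$ and the minimum $\fq$-dimension of the support of an $r$-dimensional subcode, and then observe that passing from support to trace is immediate via Theorem \ref{thm:traceandsupport}. The first reformulation would be that, for \emph{any} $\fqm$-linear subcode $\mathcal{D}\subseteq\fqm^n$, one has
\[
\dim_{\fq}\bigl(\mathrm{supp}(\mathcal{D})\bigr)=n-\dim_{\fq}\bigl(\fq^n\cap \mathcal{D}^{\perp_{\fqm}}\bigr),
\]
where $\mathcal{D}^{\perp_{\fqm}}$ is the usual $\fqm$-orthogonal in $\fqm^n$. To see this, take the $\fq$-dual of $\mathrm{supp}(\mathcal{D})=\mathrm{Tr}_{q^m/q}(\mathcal{D})$ inside $\fq^n$: $y\in\fq^n$ annihilates every $\mathrm{Tr}(d)$ iff $\mathrm{Tr}(\langle y,d\rangle)=0$ for all $d\in\mathcal{D}$, and since $\mathcal{D}$ is $\fqm$-stable we can promote this to $\langle y,d\rangle=0$ for all $d\in\mathcal{D}$ by non-degeneracy of the trace form.

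Next, I would translate the right-hand side into the system picture. Fix a generator matrix $G$ of $\C$ with columns $u_1,\dots,u_n$, so $U_{\C}=\langle u_1,\dots,u_n\rangle_{\fq}$ and, by non-degeneracy, the $u_i$ are $\fq$-linearly independent (as $\dim_{\fq}U_{\C}=n$). Any $r$-dimensional $\fqm$-subcode $\mathcal{D}\subseteq\C$ is of the form $\mathcal{D}_V=V\cdot G$ for a unique $r$-dimensional $V\leq_{\fqm}\fqm^k$, and a direct computation gives
\[
\mathcal{D}_V^{\perp_{\fqm}}=\bigl\{y\in\fqm^n:Gy^\top\in V^{\perp_{\fqm}}\bigr\}.
\]
Restricting to $y\in\fq^n$, the map $y\mapsto\sum_i y_iu_i=Gy^\top$ is an $\fq$-linear isomorphism $\fq^n\to U_{\C}$, and it carries $\fq^n\cap\mathcal{D}_V^{\perp_{\fqm}}$ onto $U_{\C}\cap V^{\perp_{\fqm}}$. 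Combining with the first step yields
\[
\dim_{\fq}\bigl(\mathrm{supp}(\mathcal{D}_V)\bigr)=n-\dim_{\fq}\bigl(U_{\C}\cap V^{\perp_{\fqm}}\bigr).
\]

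Finally, I would close the argument by noting that $V\mapsto V^{\perp_{\fqm}}$ is a bijection between $r$-dimensional $\fqm$-subspaces of $\fqm^k$ and codimension-$r$ $\fqm$-subspaces of $\fqm^k$, and that $V\mapsto\mathcal{D}_V$ is a bijection between such $V$'s and $r$-dimensional $\fqm$-subcodes of $\C$. Minimising over $V$ then gives
\[
\min_{\mathcal{D}}\dim_{\fq}\bigl(\mathrm{supp}(\mathcal{D})\bigr)=n-\max_{T}\dim_{\fq}(U_{\C}\cap T)=d_r(\C),
\]
which is the first equality. The second equality, between the minima over $\mathrm{supp}(\mathcal{D})$ and $\mathrm{Tr}_{q^m/q}(\mathcal{D})$, is immediate from Theorem \ref{thm:traceandsupport}, which identifies the two subspaces.

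The one delicate point, and the step I would flag as the main obstacle, is the passage from $\mathrm{Tr}(\langle y,d\rangle)=0$ for all $d\in\mathcal{D}$ to $\langle y,d\rangle=0$ for all $d\in\mathcal{D}$: it requires exploiting the $\fqm$-stability of $\mathcal{D}$ (not merely its $\fq$-stability) together with non-degeneracy of the trace, and it is exactly this step that makes the equality of the $\fq$-dimensions of support/trace an $\fqm$-linear phenomenon rather than an $\fq$-linear one. Everything else is linear-algebraic bookkeeping enabled by Theorem \ref{th:connection} and the non-degeneracy hypothesis.
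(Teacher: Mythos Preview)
Your proof is correct. The paper does not actually prove this statement: it is quoted as a known result from Jurrius--Pellikaan (the text preceding the theorem simply says that their alternative definition, combined with Theorem~\ref{thm:traceandsupport}, yields the displayed equalities), so there is no proof in the paper to compare against.

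That said, your argument is not foreign to the paper: the identity $\dim_{\fq}(\mathrm{supp}(\mathcal{D}))=n-\dim_{\fq}(\fq^n\cap\mathcal{D}^{\perp})$ that you derive is precisely Delsarte's trace duality, stated later as Theorem~\ref{thm:Delsartetrace} and used in Proposition~\ref{prop:suppgeometric}. In effect you have anticipated the ``new framework'' computations of Section~2.2 and used them to give a self-contained proof of the cited result. The only caveat is that you implicitly use non-degeneracy of $\mathcal{C}$ (to make $y\mapsto Gy^\top$ an $\fq$-isomorphism $\fq^n\to U_{\mathcal{C}}$), which is also built into the paper's definition of $d_r(\mathcal{C})$ via~\eqref{eq:defgenrankweight}, so this is consistent with how the statement is meant to be read.
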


An important property of generalized rank weights is established by  the {\it Wei-type duality theorem} for $\fqm$-linear rank metric codes,  originally proven by Ducoat in \cite[Theorem I.3]{ducoat2015generalized} and later derived alternatively in \cite[Theorem 7]{britz2020}. This theorem reveals a fundamental connection between the generalized rank weights of a code $\mathcal{C}$ and those of its dual  $\mathcal{C}^{\perp}$. 

\begin{theorem}{\textnormal{{(Wei-type duality, \protect{\cite[Theorem I.3]{ducoat2015generalized}).}}}}
    \label{prop: Wei-type duality}
    Let $\mathcal{C}$ be an $[n,k,d]_{q^m/q}$ code. Then
            \[
            \left\{d_r(\mathcal{C}^\perp)\colon 1 \leq r \leq n-k\right\}=\{1, \ldots, n\}\setminus\left\{n+1-d_r(\mathcal{C})\colon 1 \leq r \leq k\right\}.
            \]    
\end{theorem}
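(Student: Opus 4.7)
The plan is to combine the geometric framework of this section with the Delsarte duality correspondence. Under the identification (established in the previous results of this section) that a system associated with $\C^\perp$ is a Delsarte dual $U^d$ of $U = U_\C$, Theorem \ref{thm: dual maximum defects} translates the statement into a purely combinatorial identity between the defect sequences of $U$ and $U^d$.

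First, I would express the generalized weights in terms of defects. From \eqref{eq:defgenrankweight} the maximum of $\dim_{\fq}(U \cap T)$ over $(k-r)$-dimensional $\fqm$-subspaces $T$ equals $(k-r) + \varepsilon_{U}(k-r)$, so
\[ d_r(\C) = n - k + r - \varepsilon_{U}(k-r), \qquad d_r(\C^\perp) = k + r - \varepsilon_{U^d}(n-k-r). \]
Setting $f(j) = j + \varepsilon_{U}(j)$ and $f'(j') = j' + \varepsilon_{U^d}(j')$ and using the bijections $j = k-r$ (resp. $j' = n-k-r$), the desired identity becomes
\[ \{f(j)+1 : 0 \leq j \leq k-1\} \,\sqcup\, \{n - f'(j') : 0 \leq j' \leq n-k-1\} = \{1, \ldots, n\}. \]
Since $f$ and $f'$ are strictly increasing (by the monotonicity in Property \ref{prop: monotonicity_maxi_defect}), these sets have cardinalities $k$ and $n-k$. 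Hence it is enough to prove that $f(\{0, \ldots, k-1\})$ and $n - 1 - f'(\{0, \ldots, n-k-1\})$ partition $\{0, 1, \ldots, n-1\}$.

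The main step uses the block structure coming from Definition \ref{def:non-zero max defects}. Writing $0 = t_0 < t_1 < \cdots < t_s = k$ for the dimension jumps and $0 = \varepsilon_0 < \varepsilon_1 < \cdots < \varepsilon_s = n-k$ for the corresponding defects of $U$, the constancy of $\varepsilon_U$ on $[t_i, t_{i+1})$ yields
\[ f(\{0, \ldots, k-1\}) = \bigsqcup_{i=0}^{s-1} [t_i + \varepsilon_i, \, t_{i+1} + \varepsilon_i - 1], \]
whose complement in $\{0, \ldots, n-1\}$ is $\bigsqcup_{i=1}^{s} [t_i + \varepsilon_{i-1}, \, t_i + \varepsilon_i - 1]$ (the block for $i = s$ being $[k + \varepsilon_{s-1}, n-1]$). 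By Theorem \ref{thm: dual maximum defects}, the defect sequence of $U^d$ has jumps at $t'_j = n-k-\varepsilon_{s-j}$ with values $\varepsilon'_j = k - t_{s-j}$ for $j = 1, \ldots, s$. Performing the analogous block decomposition of $f'$ and then reflecting each block $[a, b]$ to $[n - 1 - b, n - 1 - a]$, a direct substitution produces the block $[k + \varepsilon_{s-1}, n-1]$ (from $j=0$) together with $[t_i + \varepsilon_{i-1}, t_i + \varepsilon_i - 1]$ for $i = 1, \ldots, s-1$ (from $j = s-i$), which recovers exactly the complement computed above.

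The main obstacle will be the indexing bookkeeping — the dual sequence is read in reverse — together with careful handling of the extremal blocks at $j = 0$ and $j = s$. Once the substitutions $t'_j = n-k-\varepsilon_{s-j}$ and $\varepsilon'_j = k - t_{s-j}$ are plugged into the block endpoints and simplified, all the intervals line up termwise and the Wei partition identity drops out.
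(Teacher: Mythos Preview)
Your approach is correct and essentially coincides with the paper's own alternative proof, carried out in Theorem~\ref{thm:generalizedweightduality}: both express $d_r(\C)$ and $d_r(\C^\perp)$ in terms of the defect functions $\varepsilon_{U}$ and $\varepsilon_{U^d}$ via Proposition~\ref{prop:gener_weight_maximum_defect} and Corollary~\ref{cor:delsartedualitysubspaceandcode}, decompose the resulting sets into intervals according to the jump indices $t_i$ and values $\varepsilon_i$, and then invoke Theorem~\ref{thm: dual maximum defects} to see that the two families of intervals interleave and partition $[n]$. The only cosmetic difference is that you package the computation through the auxiliary functions $f(j)=j+\varepsilon_U(j)$ and $f'$, whereas the paper writes the intervals $I_i$ and $J_j$ explicitly; note also that, like the paper's argument, your proof implicitly uses the non-degeneracy of both $\C$ and $\C^\perp$ (needed for $t_s=k$ and for applying Theorem~\ref{thm: dual maximum defects}).
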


Finally, we recall that for generalized weights, a Singleton-like bound holds.

\begin{theorem}\cite{martinez2016similarities}
     Let $\mathcal{C}$ be an $[n,k,d]_{q^m/q}$ code and let $1\leq r \leq k$. Then
     \[d_r(\C) \leq \min \left\{n-k+r,\frac{m}n(n-k)+m(r-1)+1\right\}. \]
\end{theorem}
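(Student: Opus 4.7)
The plan is to handle the two terms of the minimum separately, translating the first through the geometric framework of Section~1 and the second through the support characterisation of generalised rank weights provided by Theorem~\ref{thm:generalizedweightsminsupports}.

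For the bound $d_r(\C)\le n-k+r$, I read off the inequality from the defect of the associated system $U=U_\C$. Starting from \eqref{eq:defgenrankweight} and using, for each $\fqm$-subspace $T$ of dimension $k-r$, the identity $w_U(T)=(k-r)+\varepsilon_U(T)$, a maximisation gives
\[d_r(\C)=n-(k-r)-\varepsilon_U(k-r),\]
with $\varepsilon_U(k-r)$ the maximum defect in dimension $k-r$ of Definition~\ref{def:maximum_defect}. Since $\C$ is non-degenerate, $\langle U\rangle_{\fqm}=\F_{q^m}^k$, and Property~\ref{prop: monotonicity_maxi_defect} forces $\varepsilon_U(k-r)\ge \varepsilon_U(0)=0$; this finishes the first half.

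For the bound $d_r(\C)\le \tfrac{m}{n}(n-k)+m(r-1)+1$, I will use induction on $r$. The base case $r=1$ is exactly the Singleton-like bound of Theorem~\ref{th:singletonrank}. The inductive step reduces to the gap estimate
\[d_{r+1}(\C)-d_r(\C)\le m.\]
To prove it, by Theorem~\ref{thm:generalizedweightsminsupports} pick a subcode $\mathcal{D}\subseteq\C$ of $\fqm$-dimension $r$ with $\dim_{\fq}\mathrm{supp}(\mathcal{D})=d_r(\C)$, choose any $c\in\C\setminus\mathcal{D}$, and set $\mathcal{D}'=\langle\mathcal{D},c\rangle_{\fqm}$, an $(r+1)$-dimensional subcode of $\C$. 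The crucial claim is
\[\mathrm{supp}(\mathcal{D}')=\mathrm{supp}(\mathcal{D})+\mathrm{supp}(c).\]
Granting this identity and using $\dim_{\fq}\mathrm{supp}(c)=w(c)\le m$, I conclude $d_{r+1}(\C)\le\dim_{\fq}\mathrm{supp}(\mathcal{D}')\le d_r(\C)+m$; iterating on top of the base case gives the bound.

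The one point that needs care is the support identity for $\mathcal{D}'$. The inclusion $\mathrm{supp}(\mathcal{D})+\mathrm{supp}(c)\subseteq\mathrm{supp}(\mathcal{D}')$ is immediate from the definition of code support. The reverse inclusion rests on the invariance $\mathrm{supp}(\alpha x)=\mathrm{supp}(x)$ for every $\alpha\in\fqm^*$, which is a direct computation on the expansion matrix: fixing the basis $\Gamma$ used to define the support, multiplication by $\alpha$ realises $\Gamma(\alpha x)=\Gamma(x)M_\alpha$ with $M_\alpha\in\mathrm{GL}(m,q)$ the $\fq$-linear matrix representing multiplication by $\alpha$, and this preserves the column span. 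Combined with the standard subadditivity $\mathrm{supp}(x+y)\subseteq\mathrm{supp}(x)+\mathrm{supp}(y)$, it forces every codeword of $\mathcal{D}'$ to have its support inside $\mathrm{supp}(\mathcal{D})+\mathrm{supp}(c)$, closing the argument.
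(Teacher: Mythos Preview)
The paper does not give its own proof of this statement: it is quoted from \cite{martinez2016similarities} as an external result, so there is nothing in the paper to compare against. Your argument stands on its own and is correct. The first bound follows at once from Proposition~\ref{prop:gener_weight_maximum_defect} together with the nonnegativity of $\varepsilon_U(k-r)$ coming from Property~\ref{prop: monotonicity_maxi_defect}; the second bound via the gap estimate $d_{r+1}(\C)-d_r(\C)\le m$ is clean, and your justification of $\mathrm{supp}(\mathcal{D}')=\mathrm{supp}(\mathcal{D})+\mathrm{supp}(c)$ through the $\fqm^*$-invariance of supports and subadditivity is exactly what is needed.

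One small caveat: your route to the first inequality tacitly uses that $\C$ is non-degenerate (so that $\langle U_\C\rangle_{\fqm}=\F_{q^m}^k$ and Property~\ref{prop: monotonicity_maxi_defect} applies), an assumption that is not part of the theorem statement but is the standing hypothesis under which the paper defines $d_r$ via \eqref{eq:defgenrankweight}. If you wanted the statement in full generality you would need a direct argument through the support description of Theorem~\ref{thm:generalizedweightsminsupports}, but within the paper's framework this is a non-issue.
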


Note that when $r=1$, this bound corresponds to Theorem \ref{th:singletonrank} and when $n\leq m$ the bound simply reads as 
\[d_r(\C) \leq n-k+r. \]

\subsection{A new framework}

In this section, our aim is to describe systems associated with a rank-metric code in a quotient space, similarly to what we did in the previous section. The main advantage of doing so is to achieve a completely geometric description of the Delsarte duality, where both a system associated with the code and a system associated with its dual can be observed within the same framework. Therefore, in this section, we will only consider codes that are non-degenerate and for which their duals are still non-degenerate.

\begin{theorem} \label{thm:isomorphism_systems}
    Let $\C$ be a non-degenerate $[n,k]_{q^m/q}$ code such that $\C^{\perp}$ is non-degenerate.
    Let $G$ be any generator matrix of $\C$ and $U_{\C}$ the $\fq$-span of the columns of $G$.
    The map
    \[\phi \colon x +\C^\perp \in \frac{\F_{q^m}^n}{\C^\perp} \mapsto xG^\top \in \F_{q^m}^k \]
    is an $\F_{q^m}$-linear isomorphism which maps the $\fq$-subspace $(\F_q^n+\C^\perp)/\C^\perp$ into $U_{\C}$.
\end{theorem}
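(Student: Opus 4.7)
The plan is to separate the statement into two parts: first establish that $\phi$ is a well-defined $\fqm$-linear isomorphism, and then identify the image of the distinguished $\fq$-subspace. Both parts are direct unpackings of the definitions, with the only real computation being the identification of the kernel of a suitable lift of $\phi$.

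First, I would introduce the $\fqm$-linear map $\psi \colon \fqm^n \to \fqm^k$ defined by $\psi(x) = x G^\top$, and show that $\phi$ is the map induced by $\psi$ on the quotient. The kernel of $\psi$ consists of those vectors $x$ satisfying $x \cdot g_i = 0$ for every row $g_i$ of $G$; since the rows of $G$ span $\C$ over $\fqm$, this is exactly $\C^\perp$. Hence $\psi$ factors through $\fqm^n/\C^\perp$ to give a well-defined injective $\fqm$-linear map $\phi$. Both $\fqm^n/\C^\perp$ and $\fqm^k$ have $\fqm$-dimension $k$, so $\phi$ is an isomorphism.

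Next, I would compute the image of $(\F_q^n+\C^\perp)/\C^\perp$ under $\phi$. Because $\psi$ vanishes on $\C^\perp$, we have $\phi((\F_q^n+\C^\perp)/\C^\perp)=\psi(\F_q^n)$. Now $\psi(\F_q^n) = \{xG^\top : x \in \F_q^n\}$ is precisely the $\fq$-span of the rows of $G^\top$, i.e.\ of the columns of $G$, which is by definition $U_\C$. This gives the desired identification.

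There is no real obstacle here; the only subtlety worth flagging is the role of the non-degeneracy hypotheses in keeping the $\fq$-dimensions consistent. Non-degeneracy of $\C$ ensures $\dim_{\fq}(U_\C)=n$, and a short argument (using that $v \in \C^\perp \cap \F_q^n$ is equivalent to $\sum v_i (\text{col}_i\, G) = 0$) shows this is equivalent to $\F_q^n \cap \C^\perp = \{0\}$, so that $(\F_q^n+\C^\perp)/\C^\perp$ likewise has $\fq$-dimension $n$; the analogous statement for $\C^\perp$ is what makes the framework symmetric when later passing to the dual code, which appears to be the motivation for assuming non-degeneracy of both codes in the hypothesis.
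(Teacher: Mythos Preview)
Your proposal is correct and follows essentially the same argument as the paper: define the lift $\psi(x)=xG^\top$, identify its kernel as $\C^\perp$, deduce that $\phi$ is an isomorphism, and then compute $\phi((\F_q^n+\C^\perp)/\C^\perp)=\{xG^\top:x\in\F_q^n\}=U_\C$. Your additional paragraph on the role of the non-degeneracy hypotheses is accurate supplementary commentary but is not needed for the statement itself.
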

\begin{proof} Note that the map
\[\varphi \,\colon\, x\in \F_{q^m}^n \mapsto xG^\top \in \F_{q^m}^k\]
     is   $\Fm$-linear and $ker \varphi = \C^\perp$. Hence $\varphi$ is surjective. This implies that $\phi$ is an $\F_{q^m}$-linear isomorphism. 
   Also 
    \[ \phi\left(\frac{\F_q^n+\C^\perp}{\C^\perp}\right)=\{xG^T\,\colon\, x\in \F_q^n\}= U_{\mathcal{C}}. \]
   
\end{proof}

By the above result, we can identify $U_{\C}$ with the $\fq$-subspace $(\F_q^n+\C^\perp)/\C^\perp$ of the quotient space $\frac{\F_{q^m}^n}{\C^\perp}$. Therefore, as an immediate consequence, we have that the duality on rank-metric codes corresponds to the Delsarte duality on their systems. This has only been proved for the case $n=m$ in \cite{lunardon2017mrd,sheekeyVdV}.

\begin{corollary}\label{cor:delsartedualitysubspaceandcode}
    Let $\C$ be a non-degenerate $[n,k]_{q^m/q}$ code such that $\C^{\perp}$ is non-degenerate. We have that
    \[ U_{\C}\simeq (\F_q^n+\C^\perp)/\C^\perp \,\,\text{and}\,\, U_{\C^\perp}\simeq (\F_q^n+\C)/\C.  \]
    In particular, $U_{\C^\perp}$ is a Delsarte dual of $U_{\C}$.
\end{corollary}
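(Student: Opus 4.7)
The plan is to apply Theorem~\ref{thm:isomorphism_systems} twice and then recognise the two resulting subspaces as a Delsarte dual pair, using the standard subgeometry $\F_q^n\subseteq\F_{q^m}^n$ together with the standard bilinear form.

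First I would apply Theorem~\ref{thm:isomorphism_systems} directly to $\C$ to obtain the isomorphism $U_{\C}\simeq (\F_q^n+\C^\perp)/\C^\perp$ inside $\F_{q^m}^n/\C^\perp$. Then, since $\C^\perp$ is itself a non-degenerate $[n,n-k]_{q^m/q}$ code whose dual $(\C^\perp)^\perp=\C$ is non-degenerate by hypothesis, I may apply the same theorem to $\C^\perp$ (using a parity-check matrix of $\C$ as its generator matrix) and obtain $U_{\C^\perp}\simeq (\F_q^n+\C)/\C$ inside $\F_{q^m}^n/\C$. This settles the two displayed isomorphisms.

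For the second part, the key observation is that the Delsarte dual construction in \eqref{Delsarte dual} can be instantiated with
\[
\mathbb{V}(n,q^m)=\F_{q^m}^n,\qquad W=\F_q^n,\qquad \Gamma=\C^{\perp},
\]
and with $\sigma'\colon W\times W\to\F_q$ taken to be the standard dot product, whose $\fqm$-extension $\sigma$ is the standard $\fqm$-valued dot product on $\F_{q^m}^n$. With this choice, the orthogonal complement $\Gamma^{\perp}$ in the sense of $\sigma$ is precisely $(\C^{\perp})^{\perp}=\C$. Hence
\[
U_{\C}^{\,d}=\frac{W+\Gamma^{\perp}}{\Gamma^{\perp}}=\frac{\F_q^n+\C}{\C},
\]
which by the first part is isomorphic to $U_{\C^\perp}$.

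The only thing that requires verification is the hypothesis $\Gamma\cap W=\{\underline 0\}$ (and symmetrically $\Gamma^{\perp}\cap W=\{\underline 0\}$) needed to place the construction inside Proposition~\ref{prop:Gamma_perp}. These two conditions translate to $\C^\perp\cap\F_q^n=\{\underline 0\}$ and $\C\cap\F_q^n=\{\underline 0\}$, which I expect to be the main (and essentially the only) technical point: I would show that non-degeneracy of $\C$, i.e.\ $\F_q$-linear independence of the columns of a generator matrix $G$, is equivalent to the absence of a nonzero $v\in\F_q^n$ with $Gv^\top=0$, and hence to $\C^\perp\cap\F_q^n=\{\underline 0\}$; the symmetric statement for $\C^\perp$ gives the other equality. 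With both conditions in place, Proposition~\ref{prop:Gamma_perp} guarantees that the Delsarte dual is well-defined (and $\dim_{\fq}(U_\C^{\,d})=n$), completing the identification $U_{\C^\perp}=U_{\C}^{\,d}$ up to $\mathrm{GL}(n-k,q^m)$-equivalence.
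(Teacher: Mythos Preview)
Your proposal is correct and follows exactly the approach the paper intends: the paper treats the corollary as immediate from Theorem~\ref{thm:isomorphism_systems} (applied to $\C$ and to $\C^\perp$) together with the choice $W=\F_q^n$, $\Gamma=\C^\perp$, $\sigma'$ the standard dot product, so that $\Gamma^{\perp}=\C$ in \eqref{Delsarte dual}. Your verification that non-degeneracy of $\C$ (resp.\ $\C^\perp$) is equivalent to $\C^\perp\cap\F_q^n=\{\underline 0\}$ (resp.\ $\C\cap\F_q^n=\{\underline 0\}$) is exactly the missing routine detail the paper omits.
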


The main advantage is that we can embed both $U_{\C}$ and $U_{\C^\perp}$ in quotient spaces of the same vector space $\F_{q^m}^n$, where we can study directly the metric properties of $\C$ and $\C^\perp$ as shown in the next proposition.

\begin{proposition}
     Let $\C$ be a non-degenerate $[n,k]_{q^m/q}$ code such that $\C^{\perp}$ is non-degenerate.
     For any non-zero codeword $c \in \C$ we have
     \[ w_{U_{\C}}((\langle c\rangle_{\fqm}^\perp+\C^\perp)/\C^\perp )=\dim_{\fq}(\fq^n\cap \langle c \rangle_{\F_{q^m}}^\perp)\,\,\text{and}\,\, w(c)=n-w_{U_{\C}}(\langle c\rangle_{\fqm}^\perp). \]
\end{proposition}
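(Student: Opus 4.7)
The plan is to transfer the computation to the quotient space $\fqm^n/\C^\perp$, where both sides of the first identity live naturally. By Theorem \ref{thm:isomorphism_systems}, the $\fqm$-linear isomorphism $\phi\colon \fqm^n/\C^\perp\to \fqm^k$, $x+\C^\perp\mapsto xG^\top$, identifies $U_{\C}$ with $(\fq^n+\C^\perp)/\C^\perp$. Under this identification, the weight $w_{U_{\C}}$ of the subspace $(\langle c\rangle_{\fqm}^\perp+\C^\perp)/\C^\perp$ is simply the $\fq$-dimension of the intersection
\[
\frac{\fq^n+\C^\perp}{\C^\perp}\cap \frac{\langle c\rangle_{\fqm}^\perp+\C^\perp}{\C^\perp}=\frac{(\fq^n+\C^\perp)\cap(\langle c\rangle_{\fqm}^\perp+\C^\perp)}{\C^\perp}.
\]

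First I would exploit the inclusion $\C^\perp\subseteq \langle c\rangle_{\fqm}^\perp$, which holds because $c\in\C$ and every element of $\C^\perp$ is orthogonal to every codeword. This gives $\langle c\rangle_{\fqm}^\perp+\C^\perp=\langle c\rangle_{\fqm}^\perp$. Then Dedekind's modular law, applicable since $\C^\perp\subseteq \langle c\rangle_{\fqm}^\perp$, yields
\[
(\fq^n+\C^\perp)\cap \langle c\rangle_{\fqm}^\perp=(\fq^n\cap \langle c\rangle_{\fqm}^\perp)+\C^\perp.
\]
A standard isomorphism theorem then rewrites the quotient as $(\fq^n\cap \langle c\rangle_{\fqm}^\perp)/(\fq^n\cap \C^\perp)$. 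At this point I would use the non-degeneracy hypothesis: $\C$ non-degenerate means the columns of $G$ are $\fq$-linearly independent, which is equivalent to $\fq^n\cap \C^\perp=\{\underline 0\}$ (since $\C^\perp$ is exactly the right kernel of $G$). Hence the intersection has $\fq$-dimension $\dim_{\fq}(\fq^n\cap \langle c\rangle_{\fqm}^\perp)$, establishing the first equation.

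For the second equation, the cleanest route is via rank–nullity applied to the $\fq$-linear map
\[
\psi\colon \fq^n\longrightarrow \fqm,\qquad v\longmapsto v\,c^\top=\sum_{i=1}^n v_i c_i.
\]
Its image is the $\fq$-span $\langle c_1,\ldots,c_n\rangle_{\fq}\subseteq \fqm$, which by the very definition of the rank weight has $\fq$-dimension $w(c)$, and its kernel is precisely $\fq^n\cap \langle c\rangle_{\fqm}^\perp$. Therefore $n=w(c)+\dim_{\fq}(\fq^n\cap \langle c\rangle_{\fqm}^\perp)$, which combined with the first part gives $w(c)=n-w_{U_{\C}}((\langle c\rangle_{\fqm}^\perp+\C^\perp)/\C^\perp)$, i.e.\ the displayed formula (where $\langle c\rangle_{\fqm}^\perp$ is tacitly read through the identification $\phi$).

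The argument is essentially bookkeeping once the quotient model is in place; the only potentially subtle point is recognizing that non-degeneracy of $\C$ is exactly what is needed to kill the denominator $\fq^n\cap \C^\perp$ in the isomorphism-theorem step, and that $\C^\perp\subseteq \langle c\rangle_{\fqm}^\perp$ is what legitimates the use of Dedekind's law. No deeper input is required.
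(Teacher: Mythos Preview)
Your proof is correct and follows essentially the same route as the paper's: both compute the intersection in the quotient model $\fqm^n/\C^\perp$ and use rank--nullity on the map $v\mapsto \langle v,c\rangle$ from $\fq^n$ to $\fqm$, just in reversed order. Your justification of the modular-law step via $\C^\perp\subseteq\langle c\rangle_{\fqm}^\perp$ is in fact more explicit than the paper's, and you correctly identify that it is non-degeneracy of $\C$ (i.e.\ $\fq^n\cap\C^\perp=\{\underline 0\}$) that is needed at the last step---the paper cites non-degeneracy of $\C^\perp$ there, which appears to be a slip, though both are hypotheses anyway.
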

\begin{proof}
    Let $c=(c_1,\ldots,c_n) \in \C\setminus \{0\}$.
    The map $\varphi_{c}\colon x \in \fq^n\mapsto \langle x,c\rangle \in \fqm$ is $\fq$-linear and its image is $\langle c_1,\ldots,c_n\rangle_{\fq}$.
    Hence, from this we derive that 
    \begin{equation}\label{eq:cond1}
    w(c)=\dim_{\fq}(\textrm{Im}(\varphi_c))=n-\dim_{\fq}(\fq^n\cap \langle c \rangle_{\F_{q^m}}^\perp).
    \end{equation}
    Now, observe that $(\langle c\rangle_{\fqm}^\perp+\C^\perp)/\C^\perp$ is an hyperplane in $\fqm^n/\C^\perp$ and
    \begin{equation} \label{eq:cond2} 
    w_{U_{\C}}\left( (\langle c\rangle_{\fqm}^\perp+\C^\perp)/\C^\perp\right)=\dim_{\F_q}\left( ((\langle c\rangle_{\fqm}^\perp+\C^\perp)/\C^\perp) \cap ((\fq^n+\C^\perp)/\C^{\perp})\right)=
    \end{equation}
\[=\dim_{\fq}\left( \frac{(\langle c\rangle_{\fqm}^\perp \cap \fq^n)+\C^\perp}{\C^\perp} \right)=\dim_{\fq}(\langle c\rangle_{\fqm}^\perp \cap \fq^n), \]
since $\C^\perp$ is non-degenerate. From Equations \eqref{eq:cond1} and \eqref{eq:cond2} we obtain the assertion. 
\end{proof}

In this framework we can also read the supports of the codewords and of the subcodes, relying on the following result by Delsarte.

\begin{theorem}(\cite[Theorem 2]{delsarte1975subfield})\label{thm:Delsartetrace}
    Let $\mathcal{D}$ be an $[n,k]_{q^m/q}$ code. We have
    \[ (\mathcal{D}^\perp\cap \fq^n)^{\perp'}=\mathrm{Tr}_{q^m/q}(\mathcal{D}). \]
\end{theorem}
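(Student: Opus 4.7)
The plan is to first establish the inclusion $\mathrm{Tr}_{q^m/q}(\mathcal{D}) \subseteq (\mathcal{D}^\perp \cap \fq^n)^{\perp'}$ directly, and then obtain equality through a dimension comparison based on a bilinear pairing argument. The workhorse throughout is the trace-transfer identity
\[
\langle \mathrm{Tr}_{q^m/q}(c),\, x \rangle_{\fq} \;=\; \mathrm{Tr}_{q^m/q}\!\bigl(\langle c, x\rangle_{\fqm}\bigr), \qquad c \in \fqm^n,\; x \in \fq^n,
\]
which follows from the $\fq$-linearity of the trace together with the fact that the coordinates of $x$ lie in $\fq$. The inclusion is then immediate: if $c \in \mathcal{D}$ and $x \in \mathcal{D}^\perp \cap \fq^n$, the right-hand side equals $\mathrm{Tr}_{q^m/q}(0) = 0$, so $\mathrm{Tr}_{q^m/q}(c)$ lies in $(\mathcal{D}^\perp \cap \fq^n)^{\perp'}$.

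To obtain equality, I would introduce the $\fq$-bilinear pairing
\[
\beta : \mathcal{D} \times \fq^n \longrightarrow \fq, \qquad \beta(c, x) = \mathrm{Tr}_{q^m/q}\!\bigl(\langle c, x\rangle_{\fqm}\bigr),
\]
and compute its two kernels (viewing $\mathcal{D}$ here as an $\fq$-vector space of dimension $km$). The displayed identity combined with non-degeneracy of the standard form on $\fq^n$ shows that the left kernel is $\{c \in \mathcal{D} : \mathrm{Tr}_{q^m/q}(c) = 0\}$, hence
\[
\dim_{\fq}(\mathcal{D}) - \dim_{\fq}(\ker_L) = \dim_{\fq}\mathrm{Tr}_{q^m/q}(\mathcal{D}).
\]
For the right kernel I would exploit the $\fqm$-linearity of $\mathcal{D}$: if $\beta(c, x) = 0$ for every $c \in \mathcal{D}$, then also $\beta(\alpha c, x) = \mathrm{Tr}_{q^m/q}(\alpha \langle c, x\rangle_{\fqm}) = 0$ for every $\alpha \in \fqm$, and non-degeneracy of the trace form $(a, b) \mapsto \mathrm{Tr}_{q^m/q}(ab)$ on $\fqm$ forces $\langle c, x\rangle_{\fqm} = 0$ for all $c \in \mathcal{D}$. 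Hence $\ker_R = \mathcal{D}^\perp \cap \fq^n$.

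The standard rank identity for bilinear pairings, $\dim_{\fq}(\mathcal{D}) - \dim_{\fq}(\ker_L) = \dim_{\fq}(\fq^n) - \dim_{\fq}(\ker_R)$, then yields
\[
\dim_{\fq}\mathrm{Tr}_{q^m/q}(\mathcal{D}) = n - \dim_{\fq}(\mathcal{D}^\perp \cap \fq^n) = \dim_{\fq}(\mathcal{D}^\perp \cap \fq^n)^{\perp'},
\]
and combined with the inclusion from the first step this gives the desired equality. I expect the main subtlety to be the identification of $\ker_R$: the condition $\mathrm{Tr}_{q^m/q}(\langle c, x\rangle_{\fqm}) = 0$ alone does \emph{not} imply $\langle c, x\rangle_{\fqm} = 0$, so one really needs to use that $\mathcal{D}$ is $\fqm$-linear in order to propagate the trace-zero condition along the entire $\fqm$-line through $c$ before invoking non-degeneracy of the trace pairing on $\fqm$.
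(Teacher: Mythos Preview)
Your proof is correct. Note, however, that the paper does not give its own proof of this statement: it is quoted verbatim as Delsarte's classical result \cite[Theorem 2]{delsarte1975subfield} and used as a black box in the subsequent Proposition~\ref{prop:suppgeometric}. So there is no argument in the paper to compare against, and your self-contained proof via the trace-transfer identity and the bilinear pairing $\beta$ is a perfectly standard and clean way to recover the result.
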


The above result allows us to prove the following correspondence.

\begin{proposition}\label{prop:suppgeometric}
    Let $\C$ be a non-degenerate $[n,k]_{q^m/q}$ code such that $\C^{\perp}$ is non-degenerate.
     For any non-zero codeword $c \in\C$ we have
     \[ \mathrm{supp}(c)=\left( \langle c\rangle_{\fqm}^\perp\cap \fq^n \right)^{\perp'}, \]
     and for any subcode $\mathcal{D}$ of $\mathcal{C}$
     \[ \mathrm{supp}(\mathcal{D})=\left( \mathcal{D}^\perp\cap \fq^n \right)^{\perp'}. \]
\end{proposition}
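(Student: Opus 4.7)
The plan is to derive both identities as essentially a direct consequence of combining Theorem~\ref{thm:Delsartetrace} (the Delsarte trace formula) with Theorem~\ref{thm:traceandsupport} (supports are trace images). No further machinery is needed.

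First I would address the statement for subcodes, which is the core claim. A subcode $\mathcal{D}$ of $\mathcal{C}$ is itself a linear rank-metric code in $\fqm^n$, so Theorem~\ref{thm:Delsartetrace} applies verbatim to $\mathcal{D}$ and yields
\[
(\mathcal{D}^{\perp}\cap \fq^n)^{\perp'}=\mathrm{Tr}_{q^m/q}(\mathcal{D}).
\]
Invoking Theorem~\ref{thm:traceandsupport} rewrites the right-hand side as $\mathrm{supp}(\mathcal{D})$, giving the second identity. Note that no non-degeneracy hypothesis on $\mathcal{D}$ or on $\mathcal{D}^{\perp}$ is used at any point; the blanket assumption on $\mathcal{C}$ and $\mathcal{C}^{\perp}$ in the proposition is only part of the ambient setup of the subsection.

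I would then obtain the first identity by specialising the subcode version to $\mathcal{D}=\langle c\rangle_{\fqm}$. The one thing to verify is that $\mathrm{supp}(\langle c\rangle_{\fqm})=\mathrm{supp}(c)$. Fix an ordered $\fq$-basis $\Gamma$ of $\fqm$ and, for $\lambda\in\fqm^{\times}$, let $M_\lambda\in\mathrm{GL}(m,q)$ be the matrix representing multiplication by $\lambda$ in that basis. A direct expansion gives $\Gamma(\lambda c)=\Gamma(c)M_\lambda$, hence $\mathrm{supp}(\lambda c)=\mathrm{colsp}(\Gamma(c)M_\lambda)=\mathrm{supp}(c)$ for every $\lambda\ne 0$, while $\mathrm{supp}(0)=\{\underline 0\}$. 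Therefore the sum $\sum_{x\in\langle c\rangle_{\fqm}}\mathrm{supp}(x)$ collapses to $\mathrm{supp}(c)$. Since $\langle c\rangle_{\fqm}^{\perp}$ is precisely the dual of the subcode $\langle c\rangle_{\fqm}$ in $\fqm^n$, the second identity applied to this subcode returns exactly the first.

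The main (and really only) obstacle is a sanity check: one has to note that Theorem~\ref{thm:Delsartetrace} is formulated for an arbitrary $\fqm$-linear subspace of $\fqm^n$, so it applies to an arbitrary subcode of $\mathcal{C}$ regardless of degeneracy, and that the pairing used to define the dual of such a subcode coincides with the ambient one on $\fqm^n$. Once both points are acknowledged, the proof is exhausted by the two identifications above.
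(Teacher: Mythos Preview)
Your proof is correct and follows essentially the same route as the paper: combine Theorem~\ref{thm:Delsartetrace} with Theorem~\ref{thm:traceandsupport}, together with the observation that $\mathrm{supp}(c)=\mathrm{supp}(\langle c\rangle_{\fqm})$. The only cosmetic difference is that the paper treats the single-codeword case first and then says ``a similar argument'' yields the subcode version, whereas you prove the subcode identity first and specialise; you also spell out the $\mathrm{supp}(\lambda c)=\mathrm{supp}(c)$ step that the paper just records as ``clearly''.
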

\begin{proof}
    Clearly, $\mathrm{supp}(c)=\mathrm{supp}(\langle c\rangle_{\F_{q^m}})$. Therefore, by Theorems \ref{thm:traceandsupport} and \ref{thm:Delsartetrace} we have that
    \[ \mathrm{supp}(c)=\mathrm{Tr}_{q^m/q}(\langle c\rangle_{\F_{q^m}})=(\langle c\rangle_{\F_{q^m}}^\perp\cap \fq^n)^{\perp'}. \]
   
    A similar argument can be performed on $\mathcal{D}$ to get the second part of the assertion.
\end{proof}

We can now characterize the generalized weights in terms of the intersection with the system.

\begin{proposition} \label{prop:gener_weight_maximum_defect}
    Let $\C$ be a non-degenerate $[n,k]_{q^m/q}$ code such that $\C^{\perp}$ is non-degenerate. 
    For any $r \in [k]$, we have
    \[ d_{r}(\C)=n-k+r-\varepsilon_{U_{\C}}(k-r).\]
\end{proposition}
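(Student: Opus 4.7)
The plan is straightforward: this is a direct reformulation of the definition of the generalized rank weight in terms of the weight function $w_{U_\C}$ and then a translation from weight to defect.

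First, I recall from \eqref{eq:defgenrankweight} that for $r \in [k]$
\[ d_r(\C) = n - \max\left\{\dim_{\fq}(U_\C \cap T) \colon T \leq_{\fqm} \F_{q^m}^k,\ \mathrm{codim}_{\fqm}(T) = r\right\}.\]
A subspace $T$ of codimension $r$ in $\F_{q^m}^k$ is exactly an $\fqm$-subspace of dimension $k - r$, and by definition $\dim_{\fq}(U_\C \cap T) = w_{U_\C}(T)$. Hence
\[ d_r(\C) = n - \max\{w_{U_\C}(T) \colon T \in \mathcal L(\F_{q^m}^k),\ \dim_{\fqm}(T) = k - r\}.\]

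Second, I use the definition of defect, namely $\varepsilon_{U_\C}(T) = w_{U_\C}(T) - \dim_{\fqm}(T)$, to rewrite the inner maximum. Since $\dim_{\fqm}(T) = k - r$ is fixed over the subspaces we are maximizing on, we have
\[ \max\{w_{U_\C}(T) \colon \dim_{\fqm}(T) = k - r\} = (k - r) + \max\{\varepsilon_{U_\C}(T) \colon \dim_{\fqm}(T) = k - r\} = (k - r) + \varepsilon_{U_\C}(k - r),\]
by Definition \ref{def:maximum_defect}. Substituting this into the previous expression yields $d_r(\C) = n - (k - r) - \varepsilon_{U_\C}(k - r) = n - k + r - \varepsilon_{U_\C}(k - r)$, as claimed.

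There is essentially no obstacle: the statement is an unwinding of definitions once one observes that codimension $r$ corresponds to dimension $k - r$ and that $w_{U_\C}(T)$ equals the geometric intersection dimension used in \eqref{eq:defgenrankweight}. The only thing worth remarking is that the non-degeneracy hypotheses on $\C$ and $\C^\perp$ are not needed for the identity itself (they ensure that $U_\C$ spans $\F_{q^m}^k$ and that the framework of Corollary \ref{cor:delsartedualitysubspaceandcode} applies), but they do guarantee that the quantity $\varepsilon_{U_\C}(k-r)$ on the right-hand side is the max-defect of a subspace $U_\C$ satisfying the running assumption $\langle U_\C\rangle_{\fqm} = \F_{q^m}^k$ of Section~1.
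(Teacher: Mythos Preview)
Your proof is correct and follows essentially the same approach as the paper: both start from \eqref{eq:defgenrankweight}, rewrite $\dim_{\fq}(U_\C\cap T)$ as $\dim_{\fqm}(T)+\varepsilon_{U_\C}(T)$, and use that $\dim_{\fqm}(T)=k-r$ is constant over the subspaces being maximized to extract $\varepsilon_{U_\C}(k-r)$.
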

\begin{proof}
    By (\ref{eq:defgenrankweight}),
    \[d_r(\C)=n - \max\left\{ \dim_{\fq}(U_{\C} \cap T)  \colon T\mbox{ is an } \F_{q^m}\mbox{-subspace of codim. $r$ of  }\F_{q^m}^k  \right\}=\]
   \[=n - \max\left\{ \dim_{\F_{q^m}}(T)+\varepsilon_{U_{\C}}(T) \colon T\mbox{ is an } \F_{q^m}\mbox{-subspace of codim. $r$ of  }\F_{q^m}^k  \right\}= \]
    \[=n -k+r- \varepsilon_{U_{\C}}(k-r) .\]
    
\end{proof}

By the previous Proposition and by Proposition \ref{prop: monotonicity_maxi_defect}, we can give an alternative proof of the monotonicity property of the generalized rank weights (see e.g. \cite[Theorem I.2]{ducoat2015generalized}). 

\begin{proposition}
    Let $\C$ be a non-degenerate $[n,k]_{q^m/q}$ code such that $\C^{\perp}$ is non-degenerate. 
    The generalized weights are in increasing order.
\end{proposition}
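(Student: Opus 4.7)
The plan is to combine the preceding two results directly. By Proposition \ref{prop:gener_weight_maximum_defect}, for every $r \in [k]$ one has
\[
d_r(\C) = n - k + r - \varepsilon_{U_\C}(k-r),
\]
so strict monotonicity of $r \mapsto d_r(\C)$ is equivalent to the inequality
\[
d_{r+1}(\C) - d_r(\C) = 1 + \bigl(\varepsilon_{U_\C}(k-r) - \varepsilon_{U_\C}(k-r-1)\bigr) \geq 1
\]
for $r \in \{1,\dots,k-1\}$. Hence the task reduces to verifying the non-negativity of the bracketed difference.

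For this, I would invoke Property \ref{prop: monotonicity_maxi_defect}, which applies to $U_\C$ because $\C$ is non-degenerate, i.e.\ $\langle U_\C\rangle_{\fqm} = \fqm^k$. That property yields
\[
\varepsilon_{U_\C}(k-r-1) \leq \varepsilon_{U_\C}(k-r),
\]
so $d_{r+1}(\C) - d_r(\C) \geq 1 > 0$. Rearranging gives $d_1(\C) < d_2(\C) < \cdots < d_k(\C)$, as required.

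There is no real obstacle: the content of the proposition has been entirely absorbed into Proposition \ref{prop:gener_weight_maximum_defect} and Property \ref{prop: monotonicity_maxi_defect}. The only thing to be careful about is the direction in which the index $k-r$ moves as $r$ grows, and the fact that the hypothesis $\langle U_\C \rangle_{\fqm} = \fqm^k$ (required to apply Property \ref{prop: monotonicity_maxi_defect}) is exactly the non-degeneracy assumption on $\C$. The dual non-degeneracy assumption is used only implicitly through Proposition \ref{prop:gener_weight_maximum_defect}.
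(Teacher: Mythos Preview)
Your proof is correct and follows exactly the same route as the paper: apply Proposition \ref{prop:gener_weight_maximum_defect} to write $d_r(\C)=n-k+r-\varepsilon_{U_\C}(k-r)$, then use the monotonicity of the maximum defects from Property \ref{prop: monotonicity_maxi_defect} to conclude $d_r(\C)<d_{r+1}(\C)$. Your additional remark about why non-degeneracy of $\C$ is needed (to ensure $\langle U_\C\rangle_{\fqm}=\fqm^k$) is a helpful clarification.
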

\begin{proof}
By the above proposition, we have that for any $r \in [k]$ 
\[ d_{r}(\C)=n-k+r-\varepsilon_{U_{\C}}(k-r).\]
Now by Property \ref{prop: monotonicity_maxi_defect} we have that 
\[d_r(\C)=n-k+r-\varepsilon_{U_{\C}}(k-r) < n-k+r+1-\varepsilon_{U_{\C}}(k-r-1)=d_{r+1}(\C).\]
\end{proof}

In the next theorem, we will see that having complete knowledge of the sequence of maximum non-zero defects of the system $U_{\C}$ associated with a code $\C$ allows us to determine not only the generalized weights of $\C$, but also the generalized weights of the dual $\C^{\perp}$. As a by product, we obtain an alternative proof of the Wei-type duality theorem in the case of non-degenerate $\fqm$-linear rank-metric codes.

\begin{theorem} \label{thm:generalizedweightduality}
    Let $\C$ be a non-degenerate $[n,k]_{q^m/q}$ code such that $\C^{\perp}$ is non-degenerate. 
    Let $U_{\C}$ be a system associated with $\C$ and suppose that
    \[0=\varepsilon_{U_{\C}}(0)=\varepsilon_0< \varepsilon_{U_{\C}}(t_1)=\varepsilon_1 < \cdots < \varepsilon_{U_{\C}}(t_{s-1})=\varepsilon_{s-1}<\varepsilon_{U_{\C}}(k)=n-k
    \]
    is the sequence of maximum non-zero defects with respect to $U_{\C}$. For every $r\in \{1,\ldots,k\}$, if $t_i\leq k-r <t_{i+1}$, the $r$-th generalized weight of $\C$ is
   \begin{equation} \label{eq:gen_weight_maximum_defect}
       d_r(\C)=n-(k-r)-\varepsilon_{U_{\C}}(t_i)=n-(k-r)-\varepsilon_{i}. \end{equation} 
    In particular,
    \begin{equation} \label{eq: I}
        \{n+1-d_r(\mathcal{C})\colon 1 \leq r \leq k\}=\bigcup_{i=0}^{s-1} I_i
    \end{equation}
    where 
    $I_i=[t_{i+1}-t_i]+(\varepsilon_i+t_i)=\{1+\varepsilon_i+t_i,2+\varepsilon_i+t_i,\dots, t_{i+1}-t_i+\varepsilon_i+t_i\}$ for any $ i\in  \{0,1,\dots,s-1\}$.
    
    Also, for every $r \in \{1,\ldots,n-k\}$, let $j \in [s]$ be such that $\varepsilon_{j-1}< r\leq \varepsilon_{j}$, then the $r$-th generalized weight of $\C^\perp$ is

    \begin{equation} \label{eq:dualgeneralized weight}       
d_r(\C^\perp)=r+t_{j} 
     \end{equation}
and hence
\begin{equation} \label{eq: J}
 \{d_r(\mathcal{C}^\perp)\colon 1 \leq r \leq n-k\}=\bigcup_{j=1}^{s} J_j
\end{equation}

 where 
    $J_j=[\varepsilon_{j}-\varepsilon_{j-1}]+(\varepsilon_{j-1}+t_j)$ for $j\in [s]$.
    Finally, from Equations (\ref{eq: I}) and (\ref{eq: J}) we get
\[
            \left\{d_r(\mathcal{C}^\perp)\colon 1 \leq r \leq n-k\right\}=\{1, \ldots, n\}\setminus\left\{n+1-d_r(\mathcal{C})\colon 1 \leq r \leq k\right\}.
            \]    
    
\end{theorem}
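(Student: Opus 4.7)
The plan is to derive all four parts of the statement as consequences of Proposition \ref{prop:gener_weight_maximum_defect}, combined with the identification of $U_{\C^\perp}$ as a Delsarte dual of $U_\C$ from Corollary \ref{cor:delsartedualitysubspaceandcode} and the explicit form of the Delsarte-dual defect sequence provided by Theorem \ref{thm: dual maximum defects}. The first claim \eqref{eq:gen_weight_maximum_defect} is essentially a rewriting: Proposition \ref{prop:gener_weight_maximum_defect} gives $d_r(\C)=n-k+r-\varepsilon_{U_\C}(k-r)$, and by the defining relation \eqref{eq:non-zero_defects_Mimim_dimensions} of the maximum non-zero defect sequence one has $\varepsilon_{U_\C}(k-r)=\varepsilon_i$ whenever $t_i\leq k-r<t_{i+1}$. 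No obstacle is foreseen here.

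For \eqref{eq: I}, I would let $r$ vary over the $t_{i+1}-t_i$ integers with $t_i\leq k-r<t_{i+1}$, namely $r\in\{k-t_{i+1}+1,\ldots,k-t_i\}$, and compute directly that $n+1-d_r(\C)=k-r+1+\varepsilon_i$ ranges exactly over $\{t_i+\varepsilon_i+1,\ldots,t_{i+1}+\varepsilon_i\}=I_i$. The union $\bigcup_{i=0}^{s-1} I_i$ then has the required $k$ elements, and the set description follows. The small but crucial verification is that $k-r$ hits every value in $\{0,1,\ldots,k-1\}$ as $r$ runs over $\{1,\ldots,k\}$: this is ensured by the standing hypothesis $w_{U_\C}(H)<n-1$ for every hyperplane, which by Theorem \ref{thm:non degenerate property} is equivalent to $t_s=k$.

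For the dual part, Corollary \ref{cor:delsartedualitysubspaceandcode} identifies $U_{\C^\perp}$ with a Delsarte dual $U^d=(U_\C)^d$ inside $\mathbb{V}(n-k,q^m)$, and Theorem \ref{thm: dual maximum defects} then furnishes the full sequence of maximum non-zero defects of $U^d$: the jump-dimensions are $t'_i=n-k-\varepsilon_{s-i}$ and the values are $\varepsilon'_i=k-t_{s-i}$ for $i=1,\ldots,s$ (with $t'_s=n-k$ and $\varepsilon'_s=k$). Applying Proposition \ref{prop:gener_weight_maximum_defect} to $\C^\perp$ yields $d_r(\C^\perp)=k+r-\varepsilon_{U^d}(n-k-r)$. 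The condition $\varepsilon_{j-1}<r\leq \varepsilon_j$ translates, after subtracting from $n-k$, into $t'_{s-j}\leq n-k-r<t'_{s-j+1}$, so $\varepsilon_{U^d}(n-k-r)=\varepsilon'_{s-j}=k-t_j$, which gives \eqref{eq:dualgeneralized weight}. Running $r$ through $\{\varepsilon_{j-1}+1,\ldots,\varepsilon_j\}$ then produces values $d_r(\C^\perp)=r+t_j$ covering exactly $J_j$, proving \eqref{eq: J}.

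The Wei-type conclusion is a purely combinatorial check. The $2s$ intervals $I_0,J_1,I_1,J_2,\ldots,I_{s-1},J_s$ sit back-to-back in $\mathbb{Z}$: the right endpoint of $I_i$ is $t_{i+1}+\varepsilon_i$ and the left endpoint of $J_{i+1}$ is $t_{i+1}+\varepsilon_i+1$, while the right endpoint of $J_j$ is $\varepsilon_j+t_j$ and the left endpoint of $I_j$ is $t_j+\varepsilon_j+1$. The leftmost endpoint is $t_0+\varepsilon_0+1=1$ and the rightmost is $t_s+\varepsilon_s=k+(n-k)=n$, so the $2s$ intervals partition $\{1,\ldots,n\}$, whence the two sets in the statement are complementary in $\{1,\ldots,n\}$. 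The only non-routine ingredient of the whole argument is the bookkeeping of the reindexing $i\leftrightarrow s-i$ needed to plug Theorem \ref{thm: dual maximum defects} into Proposition \ref{prop:gener_weight_maximum_defect} for $\C^\perp$; everything else is a direct unraveling.
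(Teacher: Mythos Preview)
Your proposal is correct and follows essentially the same route as the paper: both invoke Proposition~\ref{prop:gener_weight_maximum_defect} together with \eqref{eq:non-zero_defects_Mimim_dimensions} for \eqref{eq:gen_weight_maximum_defect} and \eqref{eq: I}, then apply Corollary~\ref{cor:delsartedualitysubspaceandcode} and Theorem~\ref{thm: dual maximum defects} to read off the dual defect sequence and obtain \eqref{eq:dualgeneralized weight} and \eqref{eq: J}, and finish by checking that the intervals $I_i$ and $J_j$ abut to partition $[n]$. Your explicit remark that the hypothesis on $\C^\perp$ forces $t_s=k$ via Theorem~\ref{thm:non degenerate property}, and your spelled-out reindexing $t'_i=n-k-\varepsilon_{s-i}$, $\varepsilon'_i=k-t_{s-i}$, are helpful clarifications that the paper leaves implicit.
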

\begin{proof}
    Recall that, by Definitions \ref{def:maximum_defect} and 
    \ref{def:non-zero max defects},   $\varepsilon_{U}(0)=\varepsilon_0=0$ and $t_0=0$ for any $\fq$-subspace $U$. Now, Equation (\ref{eq:gen_weight_maximum_defect}) follows by Proposition \ref{prop:gener_weight_maximum_defect} and by  Equation (\ref{eq:non-zero_defects_Mimim_dimensions}) of Definiton \ref{def:non-zero max defects}. By  Equation (\ref{eq:gen_weight_maximum_defect}) we get (\ref{eq: I}). Indeed, recall that for any $r$ such that $t_i\leq k-r <t_{i+1}$ we have that $n+1-d_r(\mathcal{C})=k-r+1+\varepsilon_{i}$ and so
    \[\bigcup_{r=1}^k\{n+1-d_r(\mathcal{C})\}=\bigcup_{r=1}^{k}\{k-r+1+\varepsilon_{i}\,\colon i \text{ is s.t. } \, t_i\leq k-r <t_{i+1}  \}=\bigcup_{i=0}^{s-1} ([t_{i+1}-t_i]+(\varepsilon_i+t_i)).\]
    Note that if $I_i=[t_{i+1}-t_i]+(\varepsilon_i+t_i) $, then $I_i\subseteq [n]$ and  $I_i\cap I_j=\emptyset$ if $i\neq j$. 

    By Proposition \ref{prop:gener_weight_maximum_defect} applied to $\C^\perp$
    \[d_{r}(\C^\perp)=k+r-\varepsilon_{U_{\C^\perp}}(n-k-r).\]
Now, by Corollary \ref{cor:delsartedualitysubspaceandcode}, Theorem \ref{thm: dual maximum defects} and  (\ref{eq:non-zero_defects_Mimim_dimensions}) of Definiton \ref{def:non-zero max defects}, 
   we get (\ref{eq:dualgeneralized weight}), that is
   \[d_{r}(\C^\perp)=k+r-\varepsilon_{U_{\C^\perp}}(n-k-r)=k+r-\varepsilon_{U_{\C}^d}(n-k-r)=r+t_j\]
   if $n-k-\varepsilon_j \leq n-k-r < n-k-\varepsilon_{j-1}$, i.e. $\varepsilon_{j-1}< r\leq \varepsilon_{j}$. Hence, we can write 
   \[
            \bigcup_{r=1}^{n-k}\left\{d_r(\mathcal{C}^\perp)\right\}=\bigcup_{r=1}^{n-k}\{r+t_j\,\colon j \text{ is s.t. }\, \varepsilon_{j-1}< r\leq \varepsilon_{j}  \}=\bigcup_{j=1}^{s} ([\varepsilon_{j}-\varepsilon_{j-1}]+(\varepsilon_{j-1}+t_j)),\]
            obtaining (\ref{eq: J}).
            Also in this case if $J_j=[\varepsilon_{j}-\varepsilon_{j-1}]+(\varepsilon_{j-1}+t_j)$, then $J_j \subseteq [n]$ and $J_i\cap J_j=\emptyset$ if $i\neq j$.

            Finally, we have
    \[\min (J_j)=1+\varepsilon_{j-1}+t_j=1+\max (I_{j-1})\]
    \[\max (J_j)=\varepsilon_{1}+t_j=\min (I_{j})-1,\]
    and hence $I_i\cap J_j=\emptyset$ for each $i\in \{0,\dots,s-1\}$ and $j\in [s]$. This implies     
    \[
            \left\{d_r(\mathcal{C}^\perp)\colon 1 \leq r \leq n-k\right\}=\{1, \ldots, n\}\setminus\left\{n+1-d_r(\mathcal{C})\colon 1 \leq r \leq k\right\}.
            \] 
\end{proof}

\subsection{Families of codes closed under Delsarte duality}

In this section, we explore classes of codes that are either closed under duality or whose dual codes exhibit remarkable structural properties. We begin with MRD and near-MRD codes, proceed to a class of minimal codes, and conclude with a construction based on direct sums of MRD codes.

We can start with a classical result, that is,  the dual of an MRD code is an MRD code. In order to do so, we need the following result (whose proof is combinatorial and does not use any connection with the dual code). 

\begin{theorem}(\cite[Theorem 3.2]{zini2021scattered})\label{thm:MRDmaxhscatt}
    Assume that $h+1$ divides $km$ and let $n=km/(h+1)$.
    Let $\C$ be an $[n,k]_{q^m/q}$ code and let $U_{\C}$ be an associated system to $\C$.
    We have that $\C$ is an MRD code if and only if $U_{\C}$ is a maximum $h$-scattered subspace.
\end{theorem}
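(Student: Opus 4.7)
My plan is to reduce the equivalence to a statement about the sequence of maximum non-zero defects of $U_\C$, by combining Proposition \ref{prop:gener_weight_maximum_defect} with Property \ref{prop:(h,r)-evasive-defect} and the monotonicity Property \ref{prop: monotonicity_maxi_defect}. First I would observe the following translations. Since by hypothesis $\dim_{\fq}(U_\C) = n = km/(h+1)$, being maximum $h$-scattered reduces to being $h$-scattered, which by Property \ref{prop:(h,r)-evasive-defect} together with the monotonicity of defects is equivalent to $\varepsilon_{U_\C}(h) = 0$. On the code side, Proposition \ref{prop:gener_weight_maximum_defect} gives
\[
d(\C) = n - k + 1 - \varepsilon_{U_\C}(k-1),
\]
so (in the parameter regime $k \geq h+1$, where the Singleton-like bound yields $d_{\mathrm{MRD}} = m - h$) the code $\C$ is MRD if and only if $\varepsilon_{U_\C}(k-1) = n - m - k + h + 1$. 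Thus the theorem is equivalent to the single implication-pair
\[
\varepsilon_{U_\C}(h) = 0 \iff \varepsilon_{U_\C}(k-1) = n - m - k + h + 1.
\]

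For the direction ($\Leftarrow$), I would assume $U_\C$ is $h$-scattered with $\dim U_\C = n = km/(h+1)$. Then by \cite[Theorem 2.7]{csajbok2021generalising} (recalled in the proof of Corollary \ref{cor:dualmaxhscatt}), every hyperplane $H$ of $\fqm^k$ satisfies $w_{U_\C}(H) \leq n - m + h$, so $\varepsilon_{U_\C}(k-1) \leq n - m - k + h + 1$; combined with the Singleton-like bound, this forces equality and hence $\C$ is MRD. The degenerate cases where $n - k + 1 \leq m - h$ would be handled separately by the same dictionary.

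For the direction ($\Rightarrow$), assuming $\C$ is MRD, the maximum non-trivial defect $\varepsilon_{U_\C}(k-1)$ equals $n-m-k+h+1$, and the task is to conclude $\varepsilon_{U_\C}(h) = 0$. Here I would use the framework of Corollary \ref{cor:delsartedualitysubspaceandcode}, which identifies $U_\C^d$ with a system for $\C^\perp$. Since $\C^\perp$ is also MRD by classical duality of MRD codes, its maximum non-trivial defect is likewise determined. Theorem \ref{thm: dual maximum defects} then transports the whole sequence of maximum non-zero defects back and forth between $U_\C$ and $U_\C^d$, and pairing this with Proposition \ref{prop:gener_weight_maximum_defect} applied to $\C^\perp$ forces $\varepsilon_{U_\C}(i) = 0$ for all $i \leq h$.

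The main obstacle lies precisely in this ($\Rightarrow$) step: the MRD condition is a constraint only on hyperplane intersections (that is, on $\varepsilon_{U_\C}(k-1)$), while $h$-scatteredness is a constraint on intersections with low-dimensional $\fqm$-subspaces, and monotonicity alone cannot bridge them. The rigidity needed comes either from the Delsarte duality together with the MRD property of $\C^\perp$ (as sketched above), or equivalently from the weight-distribution analysis carried out in \cite[Theorem 3.2]{zini2021scattered} on which the stated result relies.
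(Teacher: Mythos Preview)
The paper does not supply its own proof of this statement: it is quoted from \cite[Theorem 3.2]{zini2021scattered} and used as a black box. Your proposal is therefore an alternative argument, built from the Delsarte-duality machinery developed in Sections~1 and~2, and the strategy is essentially correct. For the reverse implication, the computation closes as follows: from $\C^\perp$ MRD one gets $\varepsilon_{U^d}(n-k-1)=k-h-1$; by Theorem \ref{thm: dual maximum defects} this is the maximum non-trivial defect of $U^d$, which that theorem identifies with $k-t_1$, forcing $t_1=h+1$ and hence $\varepsilon_{U_\C}(h)=0$.

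Two points deserve attention. First, you must appeal to the \emph{classical} proof that the dual of an MRD code is MRD (Delsarte~\cite{de78}, Gabidulin~\cite{ga85a}): in this paper that very fact is re-derived \emph{from} Theorem~\ref{thm:MRDmaxhscatt}, so using the paper's version would be circular. Second, Corollary~\ref{cor:delsartedualitysubspaceandcode} and Theorem~\ref{thm: dual maximum defects} require $\C^\perp$ non-degenerate, i.e.\ $d(\C)\geq 2$, which amounts to $m\geq h+2$; the boundary $m=h+1$ gives $n=k$ and is handled trivially. A minor stylistic remark: your $(\Leftarrow)/(\Rightarrow)$ labels refer to the theorem's biconditional rather than to your displayed one, which is briefly disorienting.
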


We can now prove the following classical result; see e.g. \cite{ga85a,de78}.

\begin{theorem}
    Let $\C$ be a non-degenerate $[n,k]_{q^m/q}$ code such that $\C^{\perp}$ is non-degenerate. 
    We have that $\C$ is an MRD code if and only if $\C^\perp$ is an MRD code.
\end{theorem}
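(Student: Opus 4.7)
The plan is to use the geometric dictionary built in this section: the duality of rank-metric codes corresponds to the Delsarte duality of their systems (Corollary \ref{cor:delsartedualitysubspaceandcode}), and the MRD property corresponds to a maximality condition on scattering (Theorem \ref{thm:MRDmaxhscatt}). Combining these with the transport of the max-$h$-scattered property under Delsarte duality (Corollary \ref{cor:dualmaxhscatt}) should give the result with almost no new work.

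Concretely, I would first prove the forward direction. Assume $\C$ is an $[n,k]_{q^m/q}$ MRD code. The Singleton-like bound combined with MRD forces $n\mid km$, so I can set $h=km/n-1\geq 1$ and obtain $n=km/(h+1)$. Theorem \ref{thm:MRDmaxhscatt} then yields that the associated system $U_\C$ is a maximum $h$-scattered $\fq$-subspace of $\mathbb{V}(k,q^m)$.

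Next I would verify the mild hypothesis $m\geq h+2$ needed to invoke Corollary \ref{cor:dualmaxhscatt}: the assumption that $\C^\perp$ is non-degenerate forces its system $U_{\C^\perp}$ (of $\fq$-dimension $n$) to lie in $\mathbb{V}(n-k,q^m)$, so $n\leq m(n-k)$, and this rearranges exactly to $m\geq km/n+1=h+2$. Corollary \ref{cor:delsartedualitysubspaceandcode} identifies $U_{\C^\perp}$ with a Delsarte dual of $U_\C$, so Corollary \ref{cor:dualmaxhscatt} delivers that $U_{\C^\perp}$ is a maximum $(m-h-2)$-scattered subspace of $\mathbb{V}(n-k,q^m)$.

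Finally, setting $h'=m-h-2$, the identities $h+h'+2=m$ and $n=(n-k)m/(h'+1)$ are immediate from $n=km/(h+1)$, and in particular $(h'+1)\mid (n-k)m$. Applying Theorem \ref{thm:MRDmaxhscatt} in reverse to $\C^\perp$ and its maximum $h'$-scattered system then gives that $\C^\perp$ is an MRD code. The converse is obtained by running the same argument on $\C^\perp$ and using $(\C^\perp)^\perp=\C$. The only real obstacle is parameter bookkeeping (the divisibility $n\mid km$, the arithmetic identity $h+h'+2=m$, and the inequality $m\geq h+2$), but each of these is a direct one-line calculation; the Delsarte duality as packaged in Corollary \ref{cor:dualmaxhscatt} does all the substantive work.
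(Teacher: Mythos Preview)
Your proposal is essentially the paper's argument in the regime $n\ge m$, but the case $n<m$ is not covered, and this is a genuine gap. The assertion that ``MRD forces $n\mid km$'' fails there: for $n\le m$ the Singleton bound \eqref{eq:boundgen} reduces to $d\le n-k+1$, and equality imposes no divisibility constraint (a Gabidulin $[3,2]_{q^5/q}$ code is MRD with $km=10$, $n=3$). More seriously, even when $n\mid km$ does happen with $n<m$, your parameter $h=km/n-1$ then satisfies $h\ge k$, so the notion of $h$-scattered subspace of $\mathbb{V}(k,q^m)$ (which needs $h\in[k-1]$ to be meaningful) is unavailable, and neither Theorem \ref{thm:MRDmaxhscatt} nor Corollary \ref{cor:dualmaxhscatt} applies.

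The paper treats $n\le m$ separately and directly through the defect formalism: MRD then means $d_1(\C)=n-k+1$, which by Proposition \ref{prop:gener_weight_maximum_defect} says $\varepsilon_{U_{\C}}(k-1)=0$, so the sequence of maximum non-zero defects of $U_{\C}$ has length $s=1$ with $t_1=k$; formula \eqref{eq:dualgeneralized weight} then gives $d_1(\C^\perp)=1+t_1=k+1$, the MRD value for $\C^\perp$. You can repair your argument by adding this short case (or, equivalently, by invoking Theorem \ref{prop:chardualhscatt} rather than Corollary \ref{cor:dualmaxhscatt} when $n\le m$). The Delsarte-duality-of-systems philosophy still drives everything, but the ``maximum $h$-scattered'' packaging only captures the range $n\ge m$.
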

\begin{proof}
    Denote by $d$ the minimum distance of $\C$ and let $h=m-d$. Note that, since $\C$ is non-degenerate, $h\leq m-2$.
    The code $\C$ is MRD if and only if
    \[
    h=\begin{cases}
        k-1 & \text{if } n\leq m,\\
        \frac{mk}n-1 & \text{if } n> m,
    \end{cases}
    \]
    and in the latter case we need that $n$ divides $mk$.
    In both the cases, we have that $h+1$ divides $km$ and if $n>m$ we have $n=km/(h+1)$.
    If $n\leq m$, then $\C$ is an MRD code if $d_1(\C)=n-k+1$ and hence by (\ref{eq:gen_weight_maximum_defect}) $s=1$ and $t_1=k$. By (\ref{eq:dualgeneralized weight}) we get $d_1(\C^\perp)=k+1$, i.e. $\C^\perp $ is an MRD code. the reverse implication can be established using a completely similar argument. Now, let $n>m$ and hence $n=km/(h+1)$.  By Theorem \ref{thm:MRDmaxhscatt}, the code $\C$ is MRD if and only if $U_{\C}$ is a maximum $h$-scattered subspace.
    By Corollaries \ref{cor:dualmaxhscatt} and \ref{cor:delsartedualitysubspaceandcode}, we have that $U_{\C}$ is a maximum $h$-scattered subspace if and only if $U_{\C^\perp}$ is a maximum $(m-h-2)$-scattered subspace.
    Again, by applying Theorem \ref{thm:MRDmaxhscatt}, we get that this is equivalent to ask that $\C^\perp$ is an MRD code.
\end{proof}

We now consider near MRD codes, which have been introduced in \cite{marino2023evasive}. 

\begin{definition}
    Let $\C$ be an $[n,k,d]_{q^m/q}$ code. We say that $\C$ is a \textbf{near MRD} code if $d=n-k$ and $d_r(\C)=n-k+r$ for every $r \in \{2,\ldots,k\}$.
\end{definition}

In \cite{marino2023evasive} the authors also proved a geometric characterization of these codes.

\begin{proposition}(\cite[Proposition 5.3]{marino2023evasive})\label{prop:charnearMRDsystem}
    Let $\C$ be a non-degenerate $[n,k]_{q^m/q}$ code. The code $\C$ is near MRD if and only if 
    \begin{itemize}
        \item $U_{\C}$ is $(k-2)$-scattered;
        \item $U_{\C}$ is not $(k-1)$-scattered;
        \item $U_{\C}$ is $(k-1,k)$-evasive.
    \end{itemize}
\end{proposition}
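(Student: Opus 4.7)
The plan is to reduce the characterization to a statement about the maximum non-zero defects of $U_{\C}$, so that both sides of the equivalence become identical translations of the same pair of conditions on the sequence $\varepsilon_{U_{\C}}(0),\varepsilon_{U_{\C}}(1),\ldots,\varepsilon_{U_{\C}}(k)$.

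First I would rewrite every generalized weight using Proposition \ref{prop:gener_weight_maximum_defect}, i.e.\ $d_r(\C)=n-k+r-\varepsilon_{U_{\C}}(k-r)$ for all $r\in\{1,\dots,k\}$. Being near MRD imposes the Singleton value $d_r(\C)=n-k+r$ for every $r\geq 2$ and the prescribed defect $d_1(\C)=n-k$; these translate respectively into
\[
\varepsilon_{U_{\C}}(k-r)=0 \text{ for every } r\in\{2,\dots,k\}, \qquad \varepsilon_{U_{\C}}(k-1)=1.
\]
Since $U_{\C}$ is a system (hence $\langle U_{\C}\rangle_{\fqm}=\fqm^k$), Property \ref{prop: monotonicity_maxi_defect} guarantees that the sequence $\varepsilon_{U_{\C}}(\cdot)$ is non-decreasing and non-negative on $\{0,\dots,k\}$, so the first family of equalities is equivalent to the single condition $\varepsilon_{U_{\C}}(k-2)=0$.

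Next I would translate the three geometric conditions via Property \ref{prop:(h,r)-evasive-defect}. By definition, $U_{\C}$ is $(k-2)$-scattered iff $\varepsilon_{U_{\C}}(k-2)\leq 0$, which combined with non-negativity is exactly $\varepsilon_{U_{\C}}(k-2)=0$; monotonicity then forces $\varepsilon_{U_{\C}}(j)=0$ for all $j\in\{0,\dots,k-2\}$. Moreover, $U_{\C}$ fails to be $(k-1)$-scattered iff $\varepsilon_{U_{\C}}(k-1)>0$, and it is $(k-1,k)$-evasive iff $\varepsilon_{U_{\C}}(k-1)\leq 1$; together these give $\varepsilon_{U_{\C}}(k-1)=1$.

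Putting the two translations side by side, the near-MRD property and the three bullet conditions of the proposition become literally the same pair of constraints on the defect sequence of $U_{\C}$, namely $\varepsilon_{U_{\C}}(j)=0$ for $j\leq k-2$ and $\varepsilon_{U_{\C}}(k-1)=1$. This proves the equivalence in both directions simultaneously. There is no serious obstacle: the whole argument is a bookkeeping exercise using Proposition \ref{prop:gener_weight_maximum_defect}, Property \ref{prop:(h,r)-evasive-defect}, and the monotonicity of the maximum defect sequence.
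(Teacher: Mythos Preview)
Your argument is correct. The paper does not actually give its own proof of this proposition---it is quoted from \cite{marino2023evasive}---but immediately after the statement it records exactly the defect-sequence translation you derive (namely, that $\C$ is near MRD iff the sequence of maximum non-zero defects of $U_{\C}$ is $0<\varepsilon_{U_{\C}}(k-1)=1<\varepsilon_{U_{\C}}(k)=n-k$), and your bookkeeping via Proposition~\ref{prop:gener_weight_maximum_defect} and Property~\ref{prop:(h,r)-evasive-defect} is precisely what justifies that reformulation.
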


In other words, $\C$ is a near MRD code if and only if $U_{\C}$ is a $1$-defect subspace with sequence of maximum non-zero defects
\[ 0 < \varepsilon_{U_{\C}}(k-1)=1<\varepsilon_{U_{\C}}(k)=n-k. \]
We can now use Proposition \ref{prop:dual1-defect} to characterize the dual of a near MRD code.

\begin{proposition}
    Let $\C$ be a non-degenerate $[n,k]_{q^m/q}$ code such that $\C^{\perp}$ is non-degenerate. The code $\C$ is near MRD if and only if its dual is a near MRD code.
\end{proposition}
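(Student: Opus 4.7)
The plan is to reduce the statement to a direct application of the geometric characterization of near MRD codes (Proposition \ref{prop:charnearMRDsystem}) combined with the behaviour of Delsarte duality on $1$-defect subspaces (Proposition \ref{prop:dual1-defect}) and with the correspondence between code duality and Delsarte duality (Corollary \ref{cor:delsartedualitysubspaceandcode}).

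First I would rephrase the hypothesis: by Proposition \ref{prop:charnearMRDsystem}, $\C$ being near MRD is equivalent to $U_{\C}$ being a $1$-defect $\fq$-subspace of $\fqm^k$ of dimension $n$ with sequence of maximum non-zero defects
\[
0<\varepsilon_{U_{\C}}(k-1)=1<\varepsilon_{U_{\C}}(k)=n-k.
\]
Since $\C^{\perp}$ is non-degenerate, we also have $w_{U_{\C}}(H)<n-1$ for every hyperplane $H$ of $\fqm^k$, so that the Delsarte duality of $U_{\C}$ is well defined and, by Corollary \ref{cor:delsartedualitysubspaceandcode}, a Delsarte dual of $U_{\C}$ is precisely $U_{\C^\perp}$.

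Next I would feed this sequence into Proposition \ref{prop:dual1-defect} with $\ell=k-1$ and ambient dimension $k$. The proposition produces for $U_{\C^\perp}$ the sequence
\[
0<\varepsilon_{U_{\C^\perp}}(n-k-1)=k-(k-1)=1<\varepsilon_{U_{\C^\perp}}(n-k)=k.
\]
Since $U_{\C^\perp}$ lives in the $(n-k)$-dimensional ambient space $\fqm^{n-k}$, this is exactly the sequence that Proposition \ref{prop:charnearMRDsystem} requires for $\C^\perp$ (an $[n,n-k]_{q^m/q}$ code) to be near MRD. This establishes the implication $\C$ near MRD $\Rightarrow \C^\perp$ near MRD.

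For the converse, I would simply invoke the involutivity of the Delsarte duality guaranteed by Proposition \ref{prop:Gamma_perp} (the assumption $w_{U_{\C^\perp}}(H)<n-1$ for every hyperplane of $\fqm^{n-k}$ follows from the non-degeneracy of $\C=(\C^{\perp})^{\perp}$), and apply the previous implication to $\C^{\perp}$ in place of $\C$, using $(\C^{\perp})^{\perp}=\C$. There is no real obstacle in this proof: the whole content is packaged into Proposition \ref{prop:dual1-defect} and the identification of $U_{\C^\perp}$ with a Delsarte dual of $U_{\C}$; the only point requiring a mild verification is that both non-degeneracy hypotheses translate into the hyperplane-weight condition needed to invoke those results.
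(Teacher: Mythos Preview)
Your proposal is correct and follows essentially the same route as the paper: translate near MRD via Proposition~\ref{prop:charnearMRDsystem} into the $1$-defect condition with sequence $0<\varepsilon_{U_{\C}}(k-1)=1<\varepsilon_{U_{\C}}(k)=n-k$, apply Proposition~\ref{prop:dual1-defect} together with Corollary~\ref{cor:delsartedualitysubspaceandcode} to obtain the corresponding sequence for $U_{\C^\perp}$, and read off the conclusion again from Proposition~\ref{prop:charnearMRDsystem}. The only cosmetic difference is that you spell out the converse via involutivity, whereas the paper phrases both directions at once as an equivalence.
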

\begin{proof}
    By Proposition \ref{prop:charnearMRDsystem}, $\C$ is near MRD if and only if sequence of maximum non-zero defects of $U_{\C}$ is 
    \[ 0 < \varepsilon_{U_{\C}}(k-1)=1<\varepsilon_{U_{\C}}(k)=n-k. \]
    By applying  Proposition \ref{prop:dual1-defect} and Corollary \ref{cor:delsartedualitysubspaceandcode} to $U_{\C}$ we obtain that, this is equivalent to requiring that the sequence of maximum non-zero defects of $U_{\C^\perp}$ is 
    \[ 0 < \varepsilon_{U_{\C^\perp}}(n-k-1)=1<\varepsilon_{U_{\C^\perp}}(n-k)=k. \] 
    Therefore, we obtain the statement from Proposition \ref{prop:charnearMRDsystem}.
\end{proof}

Another relevant class of rank-metric codes is the family of linear quasi-MRD codes. 

The following provide a geometric characterization for some classes of codes which are quasi-MRD and/or dually quasi-MRD codes, extending \cite[Theorem 6.6]{marino2023evasive}.

\begin{theorem} \label{thm:quasi_MRD_k=3}
   Let $\C$ be a non-degenerate $[m+\rho,k]_{q^m/q}$ code with $1 \leq \rho < m/(k-1)$ such that $\C^{\perp}$ is non-degenerate. The code $\C$ is quasi-MRD if and only if $\varepsilon_{U_{\C}}(k-1)=\rho$. Whereas, the code $\C^\perp$ is quasi-MRD if and only if  $U_{\C}$ is a $(k-2)$-scattered space.
   Hence, $\C$ is dually quasi-MRD if and only if $U_{\C}$ is a $(k-2)$-scattered space and $\varepsilon_{U_{\C}}(k-1)=\rho$.
\end{theorem}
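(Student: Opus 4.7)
The plan is to reduce both equivalences to the formulas of Proposition \ref{prop:gener_weight_maximum_defect} and \eqref{eq:dualgeneralized weight}, after a short arithmetic check that pins down the quasi-MRD distances under the hypothesis $1\leq\rho<m/(k-1)$. Writing $km/(m+\rho)=k-k\rho/(m+\rho)$, the bound on $\rho$ forces $0<k\rho/(m+\rho)<1$, so $\lceil km/(m+\rho)\rceil=k$ and the quasi-MRD minimum distance of an $[m+\rho,k]_{q^m/q}$ code is $m-k+1$. The same computation applied to $\C^\perp$ (length $m+\rho$, dimension $m+\rho-k$) gives quasi-MRD distance $m-(m-k+1)+1=k$; both values are legitimate quasi-MRD regimes because $m+\rho\nmid mk$.

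For the first equivalence, Proposition \ref{prop:gener_weight_maximum_defect} with $r=1$ yields $d(\C)=(m+\rho)-k+1-\varepsilon_{U_\C}(k-1)$, so $\C$ is quasi-MRD, i.e.\ $d(\C)=m-k+1$, if and only if $\varepsilon_{U_\C}(k-1)=\rho$. For the second equivalence I would invoke \eqref{eq:dualgeneralized weight} of Theorem \ref{thm:generalizedweightduality}, whose hypothesis $w_{U_\C}(H)<n-1$ for every hyperplane translates, via Theorem \ref{thm:non degenerate property} and Proposition \ref{prop:Gamma_perp}, to the non-degeneracy of $\C^\perp$, which is assumed. Since $r=1$ falls into the range $(\varepsilon_0,\varepsilon_1]=(0,\varepsilon_1]$, the formula specialises to $d_1(\C^\perp)=1+t_1$, where $t_1$ is the smallest dimension at which $U_\C$ has positive defect. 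Hence $\C^\perp$ is quasi-MRD if and only if $t_1=k-1$.

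It remains to match the condition $t_1=k-1$ with $U_\C$ being $(k-2)$-scattered. By the monotonicity in Property \ref{prop: monotonicity_maxi_defect}, $t_1\geq k-1$ is equivalent to $\varepsilon_{U_\C}(k-2)=0$, i.e.\ by Property \ref{prop:(h,r)-evasive-defect} to $U_\C$ being $(k-2)$-scattered. The reverse inequality $t_1\leq k-1$ is automatic: the Singleton-type bound in Theorem \ref{th:singletonrank} forces $d(\C)\leq m-k+1$, and the identity of the preceding paragraph then gives $\varepsilon_{U_\C}(k-1)\geq\rho\geq 1>0$. Combining the two equivalences yields the dually quasi-MRD statement as their conjunction. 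The only subtle point is verifying $t_1\leq k-1$ unconditionally, so that the $(k-2)$-scattered condition is genuinely equivalent (not merely sufficient) for $\C^\perp$ to be quasi-MRD; the rest of the argument is bookkeeping with the defect/weight dictionary already established.
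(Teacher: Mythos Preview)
Your argument is correct and follows essentially the same route as the paper: compute the quasi-MRD target distances via the ceiling arithmetic, then use Proposition \ref{prop:gener_weight_maximum_defect} for $d(\C)$ and \eqref{eq:dualgeneralized weight} for $d(\C^\perp)=1+t_1$, finally translating $t_1=k-1$ into $(k-2)$-scatteredness. You are slightly more explicit than the paper in justifying $\varepsilon_{U_\C}(k-1)>0$ via the Singleton bound (the paper simply asserts it from $\rho>0$), but this is the same idea made transparent.
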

\begin{proof}
Let $U_{\C}$ be a system associated with $\C$ and note that since $\rho>0$, we have $\varepsilon_{U_{\C}}(k-1)>0$.
Hence, the length $s$ of the sequence of the maximum non-zero defects with respect to $U_{\C}$ is at least $2$. Also $U_{\C}$ is $(k-2)$-scattered if and only if $s=2$ and $t_1=k-1$.\\
Now, $\C^\perp$ is a non-degenerate $[m+\rho,m+\rho-k]_{q^m/q}$ code and 
 by (\ref{eq:dualgeneralized weight})
\[d(\C^\perp)=1+t_j\]
where $\varepsilon_{U_{\C}}(t_{j-1})<1\leq  \varepsilon_{U_{\C}}(t_{j})$, i.e. $d(\C^\perp)=1+t_1$. Also, $\C^\perp$ is quasi-MRD when 
\[d(\C^\perp)=m-\left\lceil \frac{m(m+\rho-k)}{m+\rho}\right\rceil+1=k.\]
Hence, $\C^\perp$ is a quasi-MRD if and only if $t_1=k-1$ and $s=2$, i.e. this happens if and only if $U_{\C}$ is a $(k-2)$-scattered space. 
Finally, $\C$ is a quasi-MRD code when 
\[d(\C)=m-\left\lceil \frac{km}{m+\rho}\right\rceil+1=m-k+1\]
and by Proposition \ref{prop:gener_weight_maximum_defect}, we have that $d(\C)=m+\rho-k+1-\varepsilon_{U_{\C}}(k-1)$, hence $\C$ is quasi-MRD if and only if $\varepsilon_{U_{\C}}(k-1)=\rho$. 
\end{proof}

In \cite{lia2024short}, the authors focused on the construction of $[m+2,3]_{q^m/q}$ minimal rank-metric codes (the shortest minimal rank-metric codes of dimension three), or equivalently on the construction of scattered subspaces in $\fqm^3$ of rank $m+2$.
They constructed scattered $\fq$-subspaces $U$ in $\fqm^3$ of dimension $m+2$, for certain values of $q$ and $m\geq 5$, with the following property: $U$ contains an $m$-dimensional $2$-scattered subspace of $\fqm^3$.

\begin{theorem}(\cite[Theorem 5.2]{lia2024short})\label{thm:boundweightLLMT}
    Let $U$ be an $(m+2)$-dimensional scattered $\fq$-subspace in $\fqm^3$ containing an $m$-dimensional $2$-scattered subspace of $\fqm^3$. 
    For any hyperplane $H$ in $\fqm^3$ we have
    \[ w_U(H) \in \{2,3,4\},\]
    i.e. $\varepsilon_U(H)\in \{0,1,2\}$
    and, for every $i \in \{2,3,4\}$, there exists at least one hyperplane having weight $i$.
\end{theorem}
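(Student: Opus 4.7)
Both bounds reduce to direct dimension counts. The inequality $w_U(H) \geq 2$ follows from
\[
\dim_{\fq}(U \cap H) \geq \dim_{\fq}(U) + \dim_{\fq}(H) - \dim_{\fq}(\fqm^3) = (m+2) + 2m - 3m = 2.
\]
For the upper bound, let $V \subseteq U$ denote the $m$-dimensional $2$-scattered subspace supplied by the hypothesis and fix any $\fq$-complement $W$ of $V$ in $U$, so $\dim_{\fq}(W) = 2$ and $U = V \oplus W$ as $\fq$-spaces. Since $V$ is $2$-scattered and $H$ is a $2$-dimensional $\fqm$-subspace, $\dim_{\fq}(V \cap H) \leq 2$. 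The natural injection $(U \cap H)/(V \cap H) \hookrightarrow U/V \cong W$ then yields
\[
w_U(H) \leq \dim_{\fq}(V \cap H) + \dim_{\fq}(W) \leq 4.
\]

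The existence of a hyperplane of weight $4$ follows from the Singleton-like bound. Let $\mathcal C$ be an $[m+2,3]_{q^m/q}$-code associated with $U$. Theorem~\ref{th:singletonrank} gives $d_1(\mathcal C) \leq m - \lceil 3m/(m+2)\rceil + 1 = m-2$ whenever $m \geq 5$, since then $3m/(m+2) \in (2,3)$. Via \eqref{eq:distancedesign}, $d_1(\mathcal C) = (m+2) - \max_H w_U(H)$, so $\max_H w_U(H) \geq 4$. Combined with the upper bound above, at least one hyperplane has weight exactly $4$.

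For the existence of weights $2$ and $3$ I would use three counting identities. Let $a_i$ denote the number of hyperplanes $H$ with $w_U(H)=i$ (so $a_i = 0$ for $i \notin \{2,3,4\}$ by the bounds above). Counting hyperplanes of $\fqm^3$ gives $a_2+a_3+a_4 = q^{2m}+q^m+1$. Double counting pairs $(v,H)$ with $v \in (U\cap H)\setminus\{0\}$, using that every nonzero vector of $\fqm^3$ lies in exactly $q^m+1$ hyperplanes, gives $\sum_i (q^i-1)a_i = (q^{m+2}-1)(q^m+1)$. Finally, double counting pairs $(L,H)$ with $L$ a $2$-dimensional $\fq$-subspace of $U \cap H$: since $U$ is scattered, each such $L$ satisfies $\dim_{\fqm}\langle L\rangle_{\fqm}=2$, so it is contained in the \emph{unique} hyperplane $\langle L\rangle_{\fqm}$, producing $\sum_i \qbin{i}{2}{q} a_i = \qbin{m+2}{2}{q}$.

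Solving this $3\times 3$ linear system yields closed-form expressions for $a_2, a_3, a_4$; in particular, one obtains
\[
a_4 \;=\; \frac{(q^{m-4}-1)(q^{m-1}-1)}{(q-1)(q^2-1)},
\]
and analogous (slightly longer) rational expressions for $a_3$ and $a_2$. The main technical obstacle is verifying that all three are strictly positive for every $q \geq 2$ and $m \geq 5$: for $a_4$ this is immediate, while for $a_3$ and $a_2$ it amounts to elementary polynomial inequalities in $q$ and $m$ obtained by estimating dominant terms (for $a_2$, using that $q^{2m}$ dwarfs the subtracted $q$-analogue $(q^{2m-2}-1)/(q-1)$, and for $a_3$, that the subtraction $(q+1)a_4$ is of order $q^{2m-6}$ against a term of order $q^{2m-3}$).
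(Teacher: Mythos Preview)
The paper does not give its own proof of this statement: it is quoted verbatim from \cite[Theorem~5.2]{lia2024short} and used as a black box (to feed into Theorem~\ref{thm:quasi_MRD_k=3}). So there is nothing in the present paper to compare your argument against.

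That said, your plan is a correct self-contained proof (under the implicit standing assumption $m\ge 5$, which is the regime in which \cite{lia2024short} works, as the paper notes just before the statement). The lower bound $w_U(H)\ge 2$ by Grassmann and the upper bound $w_U(H)\le 4$ via $(U\cap H)/(V\cap H)\hookrightarrow U/V$ are both clean and correct. The Singleton argument for the existence of a weight-$4$ hyperplane is fine for $m\ge 5$, where $3m/(m+2)\in(2,3)$; note that for $m=4$ your own formula gives $a_4=0$, so the hypothesis $m\ge 5$ is genuinely needed here and should be made explicit. Your three double-counting identities are set up correctly: the key one, $\sum_i\qbin{i}{2}{q}a_i=\qbin{m+2}{2}{q}$, uses exactly the scatteredness of $U$ to ensure each $2$-dimensional $\fq$-subspace of $U$ determines a unique hyperplane. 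The only part you leave as a sketch is the positivity of $a_2$ and $a_3$ after solving the $3\times 3$ system; this is indeed routine for $m\ge 5$, but if you want a complete proof you should write out the closed forms and the dominant-term estimates rather than gesture at them.
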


As a consequence of the above result and Theorem \ref{thm:quasi_MRD_k=3}, we have a class of dually quasi-MRD codes.

\begin{corollary}
    Let $U$ be an $(m+2)$-dimensional scattered $\fq$-subspace in $\fqm^3$ containing an $m$-dimensional $2$-scattered subspace of $\fqm^3$. Any code associated with $U$ is a dually quasi-MRD code.
\end{corollary}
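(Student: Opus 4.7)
The plan is to apply Theorem \ref{thm:quasi_MRD_k=3} directly, with $k=3$ and $n = m+2$ (so $\rho = 2$). Under this specialization, a non-degenerate $[m+2,3]_{q^m/q}$ code $\C$ is dually quasi-MRD if and only if its associated system $U_{\C}$ is $(k-2)$-scattered (i.e.\ $1$-scattered, which is the same as scattered) and $\varepsilon_{U_{\C}}(k-1) = \varepsilon_{U_{\C}}(2) = 2$. So the task reduces to verifying the hypothesis $\rho < m/(k-1)$ of Theorem \ref{thm:quasi_MRD_k=3} and then computing $\varepsilon_U(2)$ from the information provided by Theorem \ref{thm:boundweightLLMT}.

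First I would check the admissibility of the parameters: since $U$ lives in $\fqm^3$ with $m \geq 5$, we have $\rho = 2 < m/2 = m/(k-1)$, so Theorem \ref{thm:quasi_MRD_k=3} applies. The first required condition, that $U$ be $(k-2)$-scattered $=$ scattered, is part of the hypothesis of the corollary, so nothing needs to be done there.

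The key remaining step is to read off $\varepsilon_U(2)$ from Theorem \ref{thm:boundweightLLMT}. The $\fqm$-subspaces of dimension $k-1 = 2$ in $\fqm^3$ are exactly the hyperplanes $H$; by that theorem, $w_U(H) \in \{2,3,4\}$ for every hyperplane, and $w_U(H) = 4$ is attained for at least one hyperplane. Hence
\[
\varepsilon_U(2) \;=\; \max_H \bigl(w_U(H) - 2\bigr) \;=\; 4 - 2 \;=\; 2 \;=\; \rho.
\]

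Combining the two conditions just verified, the criterion of Theorem \ref{thm:quasi_MRD_k=3} yields that any code $\C$ associated with $U$ is both quasi-MRD and has quasi-MRD dual, i.e.\ $\C$ is dually quasi-MRD. I do not foresee a genuine obstacle here: the corollary is essentially a repackaging of Theorem \ref{thm:boundweightLLMT} through the geometric dictionary of Theorem \ref{thm:quasi_MRD_k=3}, once one notices that in dimension $k=3$ the hyperplanes are the $(k-1)$-dimensional subspaces and that the upper weight bound in Theorem \ref{thm:boundweightLLMT} is sharp with value $k+1 = \rho + k - 1$.
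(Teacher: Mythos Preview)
Your proposal is correct and follows exactly the approach the paper intends: the corollary is stated immediately after Theorem \ref{thm:boundweightLLMT} as a direct consequence of that result together with Theorem \ref{thm:quasi_MRD_k=3}, and you have spelled out precisely this combination. The only technicality you leave implicit is that the non-degeneracy hypothesis on $\C^{\perp}$ in Theorem \ref{thm:quasi_MRD_k=3} holds, but this is immediate from $w_U(H)\leq 4 < m+1$ for all hyperplanes $H$ (using $m\geq 5$).
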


\begin{remark}
    Therefore, the results in \cite[Theorem 3.5 and Corollary 3.9]{lia2024short} guarantee the existence of dually quasi-MRD codes.
\end{remark}

The existence of non-trivial quasi-MRD codes (which are $\fqm$-linear) seems to be a widely open problem. In fact, some constructions of  $[7,4,3]_{q^5/q}$ quasi-MRD codes have been obtained in \cite[Corollary 6.9]{marino2023evasive} by using the construction of scattered subspaces in \cite[Theorem 5.1]{bartoli2021evasive}.

\begin{remark}
    As it happens for the matrix case, there are examples of quasi-MRD codes that are not dually quasi-MRD codes. Indeed, consider the following $5$-dimensional $\fq$-subspace of $\F_{q^4}^3$
    \[ U=\{(x^q,x^{q^2}-x,x^{q^3}-a\alpha)\colon x \in\F_{q^4},\alpha \in \fq\}, \]
    where $a \in \F_{q^4}$ such that $a^{q^2}\ne -a$. 
    As it has been proved in \cite[Case ($B_3$)]{bonoli2005fq}, $U$ is not scattered,  every hyperplane of $\F_{q^4}^3$ has weight at most three and there exist hyperplanes having weight three with respect to $U$. Therefore,   a code $\C$ associated with $U$ is a  $[5,3,2]_{q^4/q}$ code. By Theorem \ref{thm:quasi_MRD_k=3}, we have that $\C^{\perp}$ is not a quasi-MRD code and $\C$ is a quasi-MRD code as $\varepsilon_U(2)=1$.
\end{remark}

Another interesting class of rank-metric codes regards those that are direct sum of MRD codes. It is well known that the direct sum of MRD codes is not an MRD code; indeed, if $\C_1$ and $\C_2$ are two MRD codes in the direct sum $\C_1\oplus\C_2$ there are codewords of the form $(c,0)$ (or $(0,c)$) which have \emph{small} weight. 
Direct sums of one-dimensional MRD codes have been called \textbf{completely decomposable} rank-metric codes and have been studied in \cite{santonastaso2024completely}.
Also motivated by the connection with the problem of representability of uniform $q$-matroids that we will discuss in the next section, we introduce the following class of rank-metric codes. 

\begin{definition}
    Let ${\mathbf k}=(k_1,\ldots,k_t)$ and  ${\mathbf n}=(n_1,\ldots,n_t)$, with $k_i<n_i\leq m$ for all $i\in [t]$. 
    Let $N=n_1+\ldots+n_t$ and $K=k_1+\ldots+k_t$.
    An $[N,K]_{q^m/q}$ code $\C$ is said to be an $(\mathbf{n},\mathbf{k})$-\textbf{MRD code} if there exist $t$ MRD codes $\C_1,\ldots,\C_t$ with parameters $[n_1,k_1]_{q^m/q},\ldots,[n_t,k_t]_{q^m/q}$, respectively, such that
    \[\C=\C_1\oplus \ldots\oplus\C_t,\]
    and with the property that every codeword $c =(c_1,c_2,\dots, c_t)$ with $c_i \in \C_i\setminus \{\underline 0\}$ for $i \in [t]$  has weight at least $N-K+1$.
\end{definition}

\begin{remark}\label{rem:smallweightcodewords(n,k)-MRD}
    Let $\C$ be an $(\mathbf{n},\mathbf{k})$-{MRD code} as in the above definition. 
    It is worth mentioning that the codewords with weight less than $N-K+1$ are of the form $(c_1,\ldots,c_t)$ where at least one of the $c_i$ is  zero.
\end{remark}

The geometric counterparts of these codes are subspaces that are $\mathbf{k}$-scattered with respect to hyperplanes.

\begin{proposition}\label{prop:geo(n.k)-MRD}
    Let $\C$ be an $[N,K]_{q^m/q}$ code and let $U_{\C}$ be a system associated with $\C$.
    Let ${\mathbf k}=(k_1,\ldots,k_t)$, ${\mathbf n}=(n_1,\ldots,n_t)$, $N=n_1+\ldots+n_t$ and $K=k_1+\ldots+k_t$ with $k_i<n_i\leq m$ for every $i\in \{1,\ldots,t\}$.
    The code $\C$ is $(\mathbf{n},\mathbf{k})$-MRD code if and only if $U_{\C}$ is $\mathbf{k}$-scattered with respect to hyperplanes of type $\mathbf{n}$. 
\end{proposition}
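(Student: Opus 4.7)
The plan is to translate each of the three defining conditions of a $\mathbf{k}$-scattered subspace with respect to the hyperplanes (Definition \ref{def:kbold-scattered}) into a statement about the code $\C$ via the correspondence of Theorem \ref{th:connection}, and to verify that together they recover exactly the definition of an $(\mathbf{n},\mathbf{k})$-MRD code.

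First I would address condition~1. A block-diagonal generator matrix $G=\mathrm{diag}(G_1,\ldots,G_t)$ of $\C=\C_1\oplus\cdots\oplus\C_t$ exhibits $U_{\C}=U_1\oplus\cdots\oplus U_t$, where $U_i$ is the $\fq$-span of the columns of $G_i$, naturally embedded in the $i$-th block $F_i$ of the decomposition $\fqm^K=F_1\oplus\cdots\oplus F_t$, with $\dim_{\fq}U_i=n_i$ and $\dim_{\fqm}F_i=k_i$. Conversely, given a decomposition of $U_{\C}$ of type $(\mathbf{k},\mathbf{n})$, an $\fqm$-basis of $\fqm^K$ adapted to the splitting $\fqm^K=F_1\oplus\cdots\oplus F_t$ produces an equivalent code with block-diagonal generator matrix, hence a presentation of $\C$ as a direct sum of codes $\C_i$ associated with $U_i$.

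Next, I would show that the MRD property of each $\C_i$ is equivalent to condition~2. Because $n_i\leq m$, the Singleton-like bound (Theorem \ref{th:singletonrank}) reduces to $d(\C_i)\leq n_i-k_i+1$, and by \eqref{eq:distancedesign} applied inside $F_i$,
\[ d(\C_i)=n_i-\max\{\dim_{\fq}(U_i\cap H)\colon H \text{ hyperplane of } F_i\}. \]
Hence $\C_i$ is MRD precisely when every hyperplane of $F_i$ meets $U_i$ in dimension at most $k_i-1$, that is, when $U_i$ is scattered with respect to the hyperplanes of $F_i$; the strict inequality $n_i>k_i$ simply excludes the trivial case $\C_i=\fqm^{k_i}$.

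Finally, I would translate condition~3 through the bijection between nonzero vectors $c=(c_1,\ldots,c_t)\in\fqm^K$ (up to scalar) and hyperplanes $H=c^{\perp}$ of $\fqm^K$. Under the block-diagonal presentation $F_i$ is the coordinate subspace $\{0\}\oplus\cdots\oplus\fqm^{k_i}\oplus\cdots\oplus\{0\}$, so $F_i\subseteq H$ holds if and only if $c_i=0$, and thus hyperplanes avoiding every $F_i$ correspond exactly to codewords $cG=(c_1G_1,\ldots,c_tG_t)$ with every block nonzero. By \eqref{eq:relweight}, $w(cG)=N-w_{U_{\C}}(H)$, so $w_{U_{\C}}(H)\leq K-1$ (equivalently $\varepsilon_{U_{\C}}(H)\leq 0$) amounts to $w(cG)\geq N-K+1$, which matches the weight hypothesis in the definition of an $(\mathbf{n},\mathbf{k})$-MRD code, cf.\ Remark \ref{rem:smallweightcodewords(n,k)-MRD}. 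Running the three equivalences in both directions then gives the statement. The main obstacle lies in the first step: one has to make the passage between a direct-sum decomposition of the code and a decomposable structure of its system fully precise modulo the notions of equivalence for codes and for systems, in order to justify the reduction to a block-diagonal generator matrix without loss of generality.
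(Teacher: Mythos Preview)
Your proposal is correct and follows essentially the same approach as the paper: both set up the block-diagonal generator matrix to establish the decomposability of $U_{\C}$, verify the MRD property of each $\C_i$ against the hyperplane-scattered condition for $U_i$ inside $F_i$, and then translate condition~3 via Theorem~\ref{th:connection} into the weight bound on codewords with all blocks nonzero. The only minor difference is that for condition~2 you argue directly from the Singleton bound and \eqref{eq:distancedesign} (which is actually cleaner, since the hypothesis is $n_i\leq m$ rather than $n_i=m$), whereas the paper invokes Theorem~\ref{thm:MRDmaxhscatt}.
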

\begin{proof}
    Assume that $\C$ is $(\mathbf{n},\mathbf{k})$-MRD code.
    This means that $\C$ admits a generator matrix in block form
    \begin{equation}\label{eq:Ggenmat}
    G=\begin{pmatrix}
        G_1 & 0 & \cdots & 0\\
        0 & G_2 & \cdots & 0\\
        \vdots & & \vdots\\
        0 & \cdots & \cdots & G_t
    \end{pmatrix},
    \end{equation}
    where $G_i \in \fqm^{k_i\times n_i}$ is a generator matrix of an MRD with parameters $[n_i,k_i]_{q^m/q}$.
    Let $F_i$ be the $\fqm$-span of the $n_i$ columns in the $i$-th block and let $U_i$ be the $\fq$-span of the $n_i$ columns in the $i$-th block, for every $i \in \{1,\ldots,t\}$.
    Observe that $\fqm^K=F_1\oplus\ldots\oplus F_t$ and $U_{\C}=U_1\oplus\ldots\oplus U_t$.
    Since the $G_i$'s are generator matrices of MRD codes, by Theorem \ref{thm:MRDmaxhscatt} the $U_i$'s are scattered subspaces with respect to the hyperplanes in $F_i$.
    It only remains to prove that  $w_{U_{\C}}(H)\leq K-1$ for each hyperplane $H$ such that $F_i\not\subseteq H$ for $i \in [t]$.
    By contradiction, suppose that there exists a hyperplane $H$ not containing any of the $F_i$'s such that 
    $w_{U_{\C}}(H)\geq K$.
    This would mean that, by Theorem \ref{th:connection}, there exists a codeword  $c=( c_1,c_2,\dots c_t)$ with $c_i \in \C_i\setminus \{\underline 0\}$ for $i \in [t]$  having weight at most $N-K$, a contradiction.
Conversely, suppose that $U_{\C} \subseteq \fqm^K$ is $\mathbf{k}$-scattered with respect to hyperplanes of type $\mathbf{n}$. Up to the action of $\mathrm{GL}(K,q^m)$, we may assume that $U_{\C}=U_1\oplus \ldots\oplus U_t$ where $U_i$ is an $\fq$-subspace of $\fqm^{k_i}$ and $\langle U_i \rangle_{\fqm}=\fqm^{k_i}$ for any $i$. We can reverse the above arguments by considering a matrix $G$ as in \ref{eq:Ggenmat} whose columns $\fq$-span $U_{\C}$.
\end{proof}

This geometric view allows us to prove that the class of $(\mathbf{n},\mathbf{k})$-MRD code is closed under duality.

\begin{theorem}\label{thm:dual(n,k)MRD}
    Let $\C$ be an $[N,K]_{q^m/q}$ code and consider ${\mathbf k}=(k_1,\ldots,k_t)$, ${\mathbf n}=(n_1,\ldots,n_t)$, $N=n_1+\ldots+n_t$ and $K=k_1+\ldots+k_t$, with $k_i<n_i\leq m$ for all $i\in \{1,\ldots,t\}$. 
    The code $\C$ is an $(\mathbf{n},\mathbf{k})$-MRD code if and only if $\C^\perp$ is an $(\mathbf{n},\mathbf{n}-\mathbf{k})$-MRD code.
\end{theorem}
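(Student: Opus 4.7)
The plan is to translate the statement into a property of the associated systems and invoke the duality machinery for $\mathbf{k}$-scattered subspaces from Section 1. The argument naturally breaks into three stages, and the first one is where the only real work sits.

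First, I will verify the non-degeneracy hypotheses required by Corollary \ref{cor:delsartedualitysubspaceandcode}. If $\C$ is $(\mathbf{n}, \mathbf{k})$-MRD then it admits a block-diagonal generator matrix $G=\mathrm{diag}(G_1,\ldots,G_t)$ whose blocks $G_i$ are generator matrices of the MRD codes $\C_i$; since each $\C_i$ is non-degenerate, the columns of $G$ are $\fq$-linearly independent and so is $\C$. To check that $\C^\perp$ is non-degenerate, I will use the equivalence between non-degeneracy of $\C^\perp$ and $\C\cap \fq^N=\{\underline 0\}$. Any non-zero vector of $\fq^{n_i}$ has rank weight $1$, whereas the MRD code $\C_i$ has minimum distance $n_i-k_i+1\geq 2$; hence $\C_i\cap \fq^{n_i}=\{\underline 0\}$ for every $i$ and consequently $\C\cap \fq^N=\{\underline 0\}$. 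By symmetry, once the forward implication is established, the same argument applied to $\C^\perp$ (which is $(\mathbf{n},\mathbf{n}-\mathbf{k})$-MRD by that implication) will deliver the reverse non-degeneracy.

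Second, I will apply Proposition \ref{prop:geo(n.k)-MRD} to rephrase the problem entirely on the geometric side: $\C$ is $(\mathbf{n},\mathbf{k})$-MRD if and only if $U_{\C}\subseteq \fqm^K$ is $\mathbf{k}$-scattered with respect to hyperplanes of type $\mathbf{n}$, and $\C^\perp$ is $(\mathbf{n},\mathbf{n}-\mathbf{k})$-MRD if and only if $U_{\C^\perp}\subseteq \fqm^{N-K}$ is $(\mathbf{n}-\mathbf{k})$-scattered with respect to hyperplanes of type $\mathbf{n}$. By Corollary \ref{cor:delsartedualitysubspaceandcode}, $U_{\C^\perp}$ is a Delsarte dual of $U_{\C}$, and conversely $U_{\C}=U_{(\C^\perp)^\perp}$ is a Delsarte dual of $U_{\C^\perp}$.

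Third, the forward implication now follows directly from Theorem \ref{th:dual k-bold scattered}: if $U_{\C}$ is $\mathbf{k}$-scattered of type $\mathbf{n}$, its Delsarte dual $U_{\C^\perp}$ is $(\mathbf{n}-\mathbf{k})$-scattered of type $\mathbf{n}$, hence $\C^\perp$ is $(\mathbf{n},\mathbf{n}-\mathbf{k})$-MRD. The reverse implication is obtained by a second application of the same theorem to $U_{\C^\perp}$, whose Delsarte dual $U_{\C}$ is then $\mathbf{n}-(\mathbf{n}-\mathbf{k})=\mathbf{k}$-scattered of type $\mathbf{n}$. The main (and essentially only) obstacle is the non-degeneracy bookkeeping of the first stage, which is what legitimises invoking Corollary \ref{cor:delsartedualitysubspaceandcode}; beyond that the theorem is a short composition of results already proven.
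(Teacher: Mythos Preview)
Your proof is correct and follows essentially the same route as the paper: translate via Proposition~\ref{prop:geo(n.k)-MRD}, identify $U_{\C^\perp}$ with a Delsarte dual of $U_{\C}$ through Corollary~\ref{cor:delsartedualitysubspaceandcode}, and apply Theorem~\ref{th:dual k-bold scattered}. The only difference is that you explicitly verify the non-degeneracy of $\C$ and $\C^\perp$ needed to invoke Corollary~\ref{cor:delsartedualitysubspaceandcode}, a point the paper's proof takes for granted; your argument for this (using $\C^\perp$ non-degenerate $\Leftrightarrow \C\cap\fq^N=\{\underline 0\}$ together with $d(\C_i)\ge 2$) is sound and fills a small gap.
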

\begin{proof}
    By Proposition \ref{prop:geo(n.k)-MRD}, $\C$ is an $(\mathbf{n},\mathbf{k})$-MRD code if and only if an associated system $U_{\C}$ is $\mathbf{k}$-scattered with respect to hyperplanes of type $\mathbf{n}$.
    By Theorem \ref{thm:dual(n,k)MRD}, the Delsarte dual $U_{\C}^d$ of $U_{\C}$ is $(\mathbf{n}-\mathbf{k})$-scattered with respect to hyperplanes of type $\mathbf{n}$. By Corollary \ref{cor:delsartedualitysubspaceandcode}, $U_{\C}^d=U_{\C^\perp}$, and therefore we get the assertion by using again Proposition \ref{prop:geo(n.k)-MRD} on $U_{\C^\perp}$.
\end{proof}

This is a very interesting property that allows us to prove that, if $t=2$, all the $(\mathbf{n},\mathbf{k})$-MRD codes have the same weight distribution. 

\begin{theorem}\label{thm:weightdistribution}
Let ${\mathbf k}=(k_1,k_2)$ and  ${\mathbf n}=(n_1,n_2)$, with $k_i<n_i\leq m$ for all $i\in \{1,2\}$. The weight distribution of an $(\mathbf{n},\mathbf{k})$-MRD code depends only on $\mathbf{n},\mathbf{k},q$ and $m$.
\end{theorem}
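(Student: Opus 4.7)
The plan is to split $\C=\C_1\oplus\C_2$ according to the four mutually exclusive shapes of codewords: $(0,0)$, $(c_1,0)$ with $c_1\ne 0$, $(0,c_2)$ with $c_2\ne 0$, and $(c_1,c_2)$ with both components nonzero. The first three contribute to the weight distribution of $\C$ exactly the zero codeword together with the MRD weight distributions of $\C_1$ and $\C_2$, and thus depend only on $n_1,k_1,n_2,k_2,q,m$. By the definition of an $(\mathbf n,\mathbf k)$-MRD code and Remark~\ref{rem:smallweightcodewords(n,k)-MRD}, every codeword of the fourth shape has rank weight at least $N-K+1$; hence it suffices to show that for each $w\in\{N-K+1,\ldots,N\}$ the quantity
\[
A_w^{\mathrm{both}}:=\#\bigl\{(c_1,c_2)\in(\C_1\setminus\{0\})\times(\C_2\setminus\{0\}):w(c_1,c_2)=w\bigr\}
\]
depends only on $\mathbf n,\mathbf k,q,m$.

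The key reduction is the identity $w(c_1,c_2)=\dim_{\fq}(S_1+S_2)$, where $S_i:=\langle c_{i,1},\ldots,c_{i,n_i}\rangle_{\fq}\subseteq\fqm$ is the $\fq$-span of the entries of $c_i$: the matrix $\Gamma(c_1,c_2)$ is obtained by stacking $\Gamma(c_1)$ above $\Gamma(c_2)$, so its rank equals the $\fq$-dimension of the sum of the two row spans, which under $\fq^m\cong\fqm$ is exactly $S_1+S_2$. Setting $\mathcal N_i(T):=\#\{c\in\C_i:\langle c_1,\ldots,c_{n_i}\rangle_{\fq}=T\}$ for each $\fq$-subspace $T\subseteq\fqm$, this rewrites as
\[
A_w^{\mathrm{both}}=\sum_{\substack{T_1,T_2\subseteq\fqm\\ T_i\ne\{0\},\;\dim_{\fq}(T_1+T_2)=w}}\mathcal N_1(T_1)\,\mathcal N_2(T_2).
\]
At this point I would invoke the following uniformity property, known for MRD codes: if $\mathcal D$ is an MRD $[n,k]_{q^m/q}$ code with $n\le m$, then $|\mathcal D\cap T^{n}|$ depends only on $\dim_{\fq}T$; M\"obius inversion over the lattice of $\fq$-subspaces of $\fqm$ then yields that $\mathcal N_i(T)$ depends only on $\dim_{\fq}T$. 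Writing $\nu_i(a)$ for this common value, we obtain
\[
A_w^{\mathrm{both}}=\sum_{a_1,a_2\ge 1}\nu_1(a_1)\,\nu_2(a_2)\,\mathcal P(a_1,a_2,w;m,q),
\]
where $\mathcal P(a_1,a_2,w;m,q)$ is the classical $q$-binomial count of ordered pairs $(T_1,T_2)$ of $\fq$-subspaces of $\fqm$ with $\dim T_i=a_i$ and $\dim(T_1+T_2)=w$. Each ingredient depending only on $\mathbf n,\mathbf k,q,m$, the theorem follows.

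The step I expect to be the main obstacle is the uniformity property $|\mathcal D\cap T^n|=f(\dim T)$ for a general (not just Gabidulin) MRD code. Two natural routes to it are: (a) the symmetric matrix-rank characterisation of MRD codes, which gives that all row and column shortenings of $\mathcal D$ (viewed as an $\fq$-linear code inside $\fq^{n\times m}$) achieve the Singleton bound, so that for every $\fq$-subspace $W\subseteq\fq^m$ of dimension $a$ one has $|\mathcal D\cap(\fq^n\otimes W)|=\max(1,q^{na-m(n-k)})$; and (b) the identification, recalled in Section~3, of the $q$-matroid of an MRD code with the uniform $q$-matroid $U_{k,n}$, which encodes the same count in matroid-theoretic terms. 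It is exactly at this point that the hypothesis $n_i\le m$ from the theorem becomes essential, since without it the symmetry between the two sides of the matrix fails.
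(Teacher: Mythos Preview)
Your decomposition into the four codeword shapes and the identity $w(c_1,c_2)=\dim_{\fq}(S_1+S_2)$ are fine, but the ``uniformity property'' you propose as the linchpin is \emph{false}, and neither of your two routes (a), (b) can establish it.

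Take $m\ge4$ even, $n=2$, $k=1$, and $\alpha\in\F_{q^2}\setminus\fq$. Then $\mathcal D=\fqm\cdot(1,\alpha)$ is a $[2,1]_{q^m/q}$ MRD code. For $T\subseteq\fqm$ one has $\mathcal D\cap T^2=\{(\lambda,\lambda\alpha):\lambda\in T\cap\alpha^{-1}T\}$, so $|\mathcal D\cap T^2|=|T\cap\alpha^{-1}T|$. Choosing $T=\F_{q^2}$ (which is $\alpha$-stable) gives $|\mathcal D\cap T^2|=q^2$, while for $T'=\langle1,\beta\rangle_{\fq}$ with $\beta\notin\F_{q^2}$ one checks directly that $T'\cap\alpha^{-1}T'=\{0\}$, so $|\mathcal D\cap T'^2|=1$. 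Both subspaces have $\fq$-dimension $2$, so $|\mathcal D\cap T^n|$ does \emph{not} depend only on $\dim T$, and your formula $\max(1,q^{na-m(n-k)})$ (which would predict $1$ here) is wrong. Route (b) rests on a confusion: the $q$-matroid $\mathcal U_{k,n}$ associated with an MRD code lives on $\fq^n$ and governs the column-span support $\mathrm{supp}(c)\subseteq\fq^n$, not the entry span $S(c)\subseteq\fqm$; these are the row space and the column space of $\Gamma(c)$, and only the former is controlled by the uniform $q$-matroid.

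The paper sidesteps this entirely. It uses Theorem~\ref{thm:dual(n,k)MRD} to see that $\C^\perp$ is $(\mathbf n,\mathbf n-\mathbf k)$-MRD, so by Remark~\ref{rem:smallweightcodewords(n,k)-MRD} applied to both sides the low-weight coefficients $A_i(\C)$ for $i\le N-K$ and $B_j=A_j(\C^\perp)$ for $j\le K$ are each the sum of two MRD weight distributions and hence determined by the parameters. Feeding these into the $q$-analogue MacWilliams identities yields, for $v=N-K+r$ with $r=1,\dots,K$, a triangular linear system in the remaining unknowns $A_{N-K+1},\dots,A_N$ with nonzero diagonal, which therefore has a unique solution. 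No statement about the distribution of entry spans within a single MRD component is needed.
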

\begin{proof}
Let $\C\subseteq \F_{q^m}^{n_1+n_2}$ be an $(\mathbf{n},\mathbf{k})$-MRD code. By Theorem \ref{thm:dual(n,k)MRD}, its dual $\C^\perp$ is a $(\mathbf{n},\mathbf{n}-\mathbf{k})$-MRD code. Let $N=n_1+n_2$, $K=k_1+k_t$.
Since $\C_1, \C_1^\perp,\C_2,\C_2^\perp$ are all MRD codes, their weight distribution is then uniquely determined; see e.g. \cite[Remark 45]{ravagnani2016rank}. 
Also,  for all $i\in \{1,\ldots,N-K\}$ we have 
$$A_i:=A_i(\C)=A_i(\C_1)+A_i(\C_2),$$
 that means that they are completely determined,
and 
$$B_j:=A_j(\C^\perp)=A_j(\C_1^\perp)+A_j(\C_2^\perp)$$
for all $j\in \{1,\ldots,K\}$, that means that they are completely determined.
 Moreover, by \cite[Proposition 4.]{gadouleau2008macwilliams}, $$\sum_{i=0}^{N-v}B_i\binom{N-i}{v}_q=q^{m(N-K-v)}\sum_{j=0}^vA_j\binom{N-j}{v-j}_q$$
for all $v\in\{0,\ldots,N\}$. 

Let $v=N-K+r$. 
$$\sum_{j=N-K+1}^{N-K+r}A_j\binom{N-j}{K-r}_q
=q^{mr}\sum_{i=0}^{K-r}B_i\binom{N-i}{N-K+r}_q-\sum_{j=0}^{N-K}A_j\binom{N-j}{K-r}_q$$
for all $r\in \{1,\ldots,K\}$. The right hand-side of the system is known, whereas the left hand-side is triangular (with non-zero diagonal), so that the solution is uniquely determined.
\end{proof}

\begin{remark}
Let us remark that the proof of Theorem \ref{thm:weightdistribution} also provides a way to explicitly compute the weight distribution, by recurrence. Actually, with the notations above, for all $r\in \{1,\ldots,K\}$, the following holds:
\begin{equation}\label{eq:formAirelBj}
A_{N-K+r}
=q^{rm}\sum_{i=0}^{K-r}B_i\binom{N-i}{N-K+r}_q-\sum_{j=0}^{N-K+r-1}A_j\binom{N-j}{K-r}_q.\end{equation}
\end{remark}

We now derive some necessary conditions on the existence of $((n_1,n_2)),(k_1,k_2))$-MRD codes.
We start with the case $(k_1,k_2)=(1,1)$.

\begin{theorem}\label{thm:case(1,1)bound}
    Let ${\mathbf k}=(1,1)$ and  ${\mathbf n}=(n_1,n_2)$, with $1<n_i\leq m$ for all $i\in \{1,2\}$.
    Suppose that an $((n_1,n_2)),(1,1))$-MRD code $\C$ exists in $\fqm^{n_1+n_2}$.
    We have that $n_i\leq m/2$ for every $i \in \{1,2\}$. Moreover,
    \begin{itemize}
        \item if $n_1=n_2$, then $m\geq n_1+n_2$ and equality holds if and only if $n_1=n_2=m/2$;
        \item if $n_1\ne n_2$, then $m\geq n_1+n_2+1$.
    \end{itemize}
\end{theorem}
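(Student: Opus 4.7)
The plan is to translate the hypothesis via Proposition \ref{prop:geo(n.k)-MRD} into a geometric condition on the associated system $U = U_1 \oplus U_2 \subseteq \fqm^2$, where $\dim_{\fq}(U_i) = n_i$ and $\langle U_i\rangle_{\fqm} = F_i$. Every 1-dimensional $\fqm$-subspace of $\fqm^2$ distinct from $F_1$ and $F_2$ has the form $H_\lambda = \langle(1, \lambda)\rangle_{\fqm}$ for a unique $\lambda \in \fqm^*$, and $\dim_{\fq}(U \cap H_\lambda) = \dim_{\fq}(U_1 \cap \lambda^{-1} U_2)$. Thus condition 3 of Definition \ref{def:kbold-scattered} becomes
\[ \dim_{\fq}(U_1 \cap \lambda U_2) \leq 1 \qquad \text{for every } \lambda \in \fqm^*. \tag{$\dagger$}\]

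The main step is a double counting of pairs $(u_1, u_2) \in U_1^* \times U_2^*$: sorting by $\lambda = u_1 u_2^{-1}\in \fqm^*$, the fiber over $\lambda$ is $(U_1 \cap \lambda U_2)\setminus\{0\}$, of cardinality at most $q-1$ by $(\dagger)$. Summing yields
\[ (q^{n_1}-1)(q^{n_2}-1) \leq (q^m-1)(q-1). \tag{$\star$}\]

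For the case $n_1=n_2=n$, I would rearrange $(\star)$ to $(q^n-1)^2 \leq (q^m-1)(q-1)$ and argue, by contradiction, that $2n \geq m+1$ is impossible: expanding gives $q^{2n}-2q^n+1 \leq q^{m+1}-q^m-q+1$, which combined with $q^{2n} \geq q^{m+1}$ forces $q^m+q \leq 2q^n \leq 2q^{(m+1)/2}$, contradicting $q^{(m-1)/2}\geq 2$ for $q\geq 2$ and $m\geq 3$. Hence $2n \leq m$, and equality is allowed only when $m$ is even and $n=m/2$.

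For the case $n_1 \neq n_2$, the inequality $(\star)$ already rules out $n_1+n_2=m+1$: in that regime $(\dagger)$ forces $\dim(U_1 \cap \lambda U_2)=1$ for every $\lambda \in \fqm^*$, and $(\star)$ becomes the equality $q^{n_1}+q^{n_2}=q^m+q$, which has no solutions subject to $n_1 \neq n_2$, $n_i \geq 2$, $m\geq 3$ by a short case analysis on the two largest terms. To exclude the boundary $n_1+n_2=m$ with $n_1 \neq n_2$, I would pass to the Delsarte dual and exploit Theorem \ref{th:dual k-bold scattered}: $\C^\perp$ is $((n_1,n_2),(n_1-1,n_2-1))$-MRD, whose associated system $V = V_1 \oplus V_2 \subseteq \fqm^{n_1+n_2-2}$ has components $V_i$ that are $(n_i-2)$-scattered in $G_i=\fqm^{n_i-1}$ with $\dim_{\fq}(V_i) = n_i$. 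In the regime $n_1+n_2=m$, this forces a rigid equality in the counting analog of $(\star)$ for $V$ together with the weight distribution forced by Theorem \ref{thm:weightdistribution}; requiring $A_j(\C^\perp)\geq 0$ (via formula \eqref{eq:formAirelBj}) and combining with the scattered structure of $V$ yields the desired contradiction.

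Combining all three cases gives $n_i \leq m/2$; the refined equality condition for $n_1=n_2$ and the strict bound $n_1+n_2 \leq m-1$ for $n_1 \neq n_2$ then follow from the integrality of the $n_i$. The main obstacle is the last sub-case ($n_1 \neq n_2$, $n_1+n_2=m$): the counting inequality $(\star)$ is no longer strict there, so the argument must invoke the dual system $V$ and the refined $\mathbf{k}$-scattered structure to extract the sharper bound rather than relying on a single counting step.
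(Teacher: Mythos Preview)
Your counting inequality $(\star)$ is correct and, for the case $n_1=n_2=n$, it does yield $2n\le m$ (your write-up has a minor slip: from $2n\ge m+1$ you get $q^m+q\le 2q^n$, and the contradiction comes from the \emph{critical} case $2n=m+1$, where $2q^n=2q^{(m+1)/2}$, not from an upper bound $2q^n\le 2q^{(m+1)/2}$; for $2n\ge m+2$ one argues separately or reduces by passing to a smaller $U_i$).

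The real gap is in the unequal case. Your three cases do \emph{not} combine to give $n_i\le m/2$: you only treat $n_1+n_2\in\{m,m+1\}$, but the statement requires the individual bound $n_i\le m/2$, which is strictly stronger than $n_1+n_2\le m-1$. Concretely, $m=10$, $n_1=2$, $n_2=7$ satisfies $n_1+n_2=9<m$, so neither of your sub-cases applies, yet $n_2>m/2$ must still be excluded; your inequality $(\star)$ is easily seen to hold here, so it cannot do the job. Moreover, your case~3 sketch (via the dual system and the weight distribution) is too vague to evaluate and would in any event only address $n_1+n_2=m$, not the full claim.

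The paper takes a different route: it invokes \cite[Theorem 4.4]{napolitano2022linear}, which directly gives $n_i\le m/2$; the two bullet points then follow by pure arithmetic (if $n_1\ne n_2$ and both are $\le m/2$, then $n_1+n_2<m$). In your framework the missing argument is this: assume $n_2>m/2$ and pick $u,u'\in U_1$ that are $\fq$-independent (possible since $n_1\ge 2$); then $u^{-1}U_2$ and $u'^{-1}U_2$ are $n_2$-dimensional $\fq$-subspaces of $\fqm$, so their intersection has dimension at least $2n_2-m>0$. Any nonzero $\mu$ in this intersection satisfies $\mu u,\mu u'\in U_2$, whence $u,u'\in U_1\cap \mu^{-1}U_2$, contradicting $(\dagger)$. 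This two-line argument replaces your entire case analysis for $n_1\ne n_2$ and immediately yields the full theorem.
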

\begin{proof}
    The $q$-system $U_{\C}$ associated with $\C$ is $(1,1)$-scattered with respect to the hyperplanes of type $(n_1,n_2)$ in $\fqm^2$, because of Proposition \ref{prop:geo(n.k)-MRD}.
    By \cite[Theorem 4.4]{napolitano2022linear}, we have that $n_1\leq m/2$ and $n_2\leq m/2$, hence $m\geq n_1+n_2$.
    Moreover, the equality holds if and only if $n_1=n_2=m/2$.
\end{proof}

In particular, we note that the rank-metric codes associated with the construction in \cite[Corollary 4.7]{napolitano2022linear} give examples of $((m/2,m/2)),(1,1))$-MRD codes, showing that the above bound is sharp when $n_1=n_2$.

\begin{remark}
Using the formulas \eqref{eq:formAirelBj}, one can obtain non-existence results for certain parameters. This is possible because for extensions that are too small, the resulting weight distributions include negative values, which is clearly not feasible. Another criterion for non-existence is that positive coefficients appear where, due to maximal rank considerations, this is not possible.
Unfortunately, obtaining a general result appears to be too complex. Therefore, we provide Table \ref{tab:dati} with some small cases. 
Table \ref{tab:dati} summarizes all the other results we obtained putting in the first four columns the lengths and dimensions and in the last column the bound on $m$ that we get. 
\end{remark}

We illustrate one of the cases considered in Table \ref{tab:dati} in the following example.

\begin{example}
Let $n_1=n_2=3$, $k_1=1$, $k_2=2$. In this case, 
$$A_0(\C_1)=1, \ A_1(\C_1)=A_2(\C_1)=0, \ A_3(\C_1)=q^m-1$$
and
$$A_0(\C_2)=1, \ A_1(\C_2)=0, \ A_2(\C_2)=(q^m-1)(q^2+q+1), \ A_3(\C_2)=(q^m-1)(q^m-q^2-q).$$
Hence, by the above formula
\begin{align*}
A_0(\C) & = 1\\
A_1(\C) & = 0\\
A_2(\C) & = (q^m-1)(q^2+q+1) \\
A_3(\C) & = (q^m-1)(q^m-q^2-q+1) \\
A_4(\C) & = (q^m-1)(q-1)(q+1)(q^2-q+1)(q^2+q+1)^2\\
A_5(\C) & = (q^m-1)q(q^2+q+1)(q^{m+2}-q^6-q^5+1) \\
A_6(\C) & = (q^m-1)(q^{2m}-q^{m+5}-q^{m+4}-q^{m+3}+q^{9}+q^{8}+q^{7}-q^3)
\end{align*}
Note that $A_5$ is negative iff $m\leq 4$ and $A_i\geq 0$ for $i\in\{0,\ldots,6\}$ if $m\geq 5$. However, $A_6\neq 0$ if $m=5$, which is not possible, since the maximum possible rank is $5$.  
\end{example}

\begin{table}[h!]
\centering
\begin{tabular}{cccccc}
\hline
$n_1$ & $n_2$ & $k_1$ & $k_2$ & $m$ \\
\hline
3 & 3 & 1 & 2 & $\geq$ 6 \\
4 & 4 & 1 & 2 & $\geq$ 8 \\
4 & 4 & 1 & 3 & $\geq$ 8 \\
4 & 4 & 2 & 2 & $\geq$ 8 \\
5 & 5 & 1 & 2 & $\geq$ 10 \\
5 & 5 & 1 & 3 & $\geq$ 10 \\
5 & 5 & 1 & 4 & $\geq$ 10 \\
5 & 5 & 2 & 2 & $\geq$ 10 \\
5 & 5 & 2 & 3 & $\geq$ 10 \\
\hline
\end{tabular}
\caption{Lower bounds on $m$ for $((n_1,n_2),(k_1,k_2))$-MRD codes in $\fqm^{n_1+n_2}$.}
\label{tab:dati}
\end{table}

\begin{question}
Looking at Table \ref{tab:dati} and Theorem \ref{thm:case(1,1)bound}, it is natural to wonder whether the bound 
$m\geq n_1+n_2$ holds in general (for $k_i\in \{1,\ldots,n_i-1\}$). In particular, is it true that $A_{n_1+n_2}(\C)>0$ for $m=n_1+n_2-1$?
\end{question}

\section{Consequences on the representability of uniform $q$-matroids}

The notion of $q$-matroids originates from Crapo’s PhD thesis \cite{crapo1964theory}. In more recent years, $q$-matroids have been revisited in \cite{jurrius2018defining} and have attracted considerable interest due to their connection with rank-metric codes. See, for example, \cite{gorla2019rank, shiromoto2019codes, ghorpade2020polymatroid, byrne2022constructions, byrne2021weighted, gluesing2021q}.

\begin{definition}
    A \textbf{$q$-matroid with ground space} $E$ is a pair $\mM=(E,\rho)$, where $E$ is a finite dimensional $\F_q$-vector space and $\rho$ is an integer-valued 
	function defined on $\mL(E)$ with the following properties:
	\begin{itemize}
		\item[(R1)] Boundedness: $0\leq \rho(A) \leq \dim A$, for all $A\in \mL(E)$. 
		\item[(R2)] Monotonicity: $A\leq B \Rightarrow \rho(A)\leq \rho(B)$, for all $A,B\in\mL(E)$. 
		\item[(R3)] Submodularity: $\rho(A+ B)+\rho(A\cap B)\leq \rho(A)+\rho(B)$, for all $A,B\in\mL(E)$.  
	\end{itemize}
The function $\rho$ is called \textbf{rank function} and the value $\rho(\mM) := \rho(E)$ is the \textbf{rank of the $q$-matroid}. The value $h(\mM)=\dim_{\F_q}(E)$ is the \textbf{height of $\mM$}.
\end{definition}

A well-known family of $q$-matroids is the family of \emph{uniform $q$-matroids}; see \cite{jurrius2018defining}.

\begin{definition}\label{def:uniform}
Let $0\leq k\leq n$. For each $V\in\mL(\F_q^n)$, define $\rho(V) :=\min\{k,\dim(V)\}$. Then $(\F_q^n,\rho)$ is a $q$-matroid. It is called the \textbf{uniform $q$-matroid on $\F_q^n$ of rank $k$} and is denoted by $\mU_{k,n}(q)$.
\end{definition}

A possible way to construct $q$-matroids is via matrices; see \cite{jurrius2018defining, gorla2019rank}. Let $G$ be a $k \times n$ matrix with entries in $\F_{q^m}$ and for every $U\in\mL(\F_q^n)$, let $A^U$ be a matrix whose columns form a basis of $U$. Then the map 
\begin{equation}\label{eq:rank1}
    \rho:\mL(\F_q^n) \to \mathbb{Z}, \  U\mapsto \rk(G A^U),
\end{equation}
is the rank function of a $q$-matroid, which we denote by $\mM_G$ and we call the \textbf{$q$-matroid represented by $G$}. 

A $q$-matroid $\mM$ with ground space $\F_q^n$ is called \textbf{$\F_{q^m}$-representable} if $\mM=\mM_G$ for some matrix $G$ with  $n$ columns and entries in $\F_{q^m}$, whose rank equals the rank of $\mM$. The $q$-matroid $\mM$ is called \textbf{representable} if it is \textbf{$\F_{q^m}$-representable} over some field extension $\F_{q^m}$.
In other words, we can say that $\mM$ is $\F_{q^m}$-representable if there is a rank-metric code with generator matrix $G$, such that $\mM=\mM_G$. 
As proved in \cite{alfarano2022cyclic}, the representability of a $q$-matroid can be expressed in terms of $q$-systems, for further details we refer to \cite[Theorem 4.6]{alfarano2022cyclic}.

\begin{remark}
Let $\mU_{k,n}(q)$ be the uniform $q$-matroid of rank $k$ with ground space $\F_q^n$. Then the \emph{trivial} uniform $q$-matroid $\mU_{0,n}(q)$ and the \emph{free} uniform $q$-matroid $\mU_{n,n}$ are
representable over $\F_q$. For $0 < k < n$, the uniform $q$-matroid $\mU_{k,n}(q)$ is representable over $\F_{q^m}$ if and only if
$m \geq n$.  
Analogously, in the language of $q$-systems, a $\Fm$-representation of $\mathcal U_{k,n}(q)$ is given by any $\Fmk$ system which is $(k-1)$-scattered, and this is known to exist if and only if $m\ge n$; see e.g. \cite{de78,sheekeyVdV}.
\end{remark}

In \cite{ceria2021direct} the notion of direct sum of two $q$-matroids has been introduced. We are going to define the direct sum of $t$ many $q$-matroids iteratively, thanks to the associativity property of the direct sum, proved in \cite{gluesing2023decompositions}.
To this aim we introduce the following map.
For $n_1,n_2,\ldots, n_t\in\mathbb{N}$ and $n=n_1+n_2+\cdots+n_t$, consider $E_1,\ldots,E_t$ $\fq$-subspaces of an $\fq$-vector space $E$ such that $E=E_1\oplus\cdots\oplus E_t$. We denote by
$\pi_i:E\longrightarrow E_i$ is the projection onto $E_i$, for any $i \in [t]$.

\begin{definition}
    Let $\mM_i=(E_i,\rho_i),\,i\in[t],$ be $q$-matroids and set $E=E_1\oplus\cdots\oplus E_t$.
Define $\rho'_i:\mL(E)\longrightarrow \mathbb{N}_0,\ V\longmapsto \rho_i(\pi_i(V))$ for $i\in[t]$.
Then $\mM'_i=(E,\rho'_i)$ is a $q$-matroid. 
We define the \textbf{direct sum of $\mM_1, \ldots,\mM_t$} to be the $q$-matroid $\mM:=(E,\rho)$, where $\rho$ is defined iteratively as follows:
\begin{itemize}
\item If $t=2$, then
\begin{equation}\label{e-rho}
\rho:\mL(E)\longrightarrow\mathbb{N}_0,\quad V\longmapsto \dim V+\min_{X\leq V}\big(\rho'_1(X)+\rho'_2(X)-\dim X\big).
\end{equation}
\item If $t>2$, then $\rho$ is the rank function of $(\mM_1\oplus\cdots\oplus\mM_{t-1})\oplus \mM_t$.
\end{itemize}
\end{definition}

The representability of the direct sum of uniform $q$-matroids has been characterized geometrically as follows; see \cite[Theorem 2.4]{alfarano2024representability}.

\begin{theorem}\label{thm:charact_independence}
Let $k_1,\ldots,k_t,k,n_1,\ldots,n_t, n, m$ be positive integers, with $1\le k_i< n_i\le m$ for $i \in [t]$ and $k=k_1+\ldots+k_t$, $n=n_1+\ldots+n_t$.
Then the following statements are equivalent.
\begin{enumerate}
    \item $\mU_{k_1,n_1}(q)\oplus\ldots\oplus\mU_{k_t,n_t}(q)$ is $\Fm$-representable;
    \item there exists a $\mathbf{k}$-scattered subspace with respect to the hyperplanes of type $\mathbf{n}=(n_1,\ldots,n_t)$, where $\mathbf{k}=(k_1,\ldots,k_t)$;
    \item there exists an $(\mathbf{n},\mathbf{k})$-MRD code in $\fqm^n$.
\end{enumerate}
\end{theorem}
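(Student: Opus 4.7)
The plan is to split the three-way equivalence into $(1)\iff(2)$ and $(2)\iff(3)$, observe that the first half is precisely the result imported at the beginning of the section (\cite[Theorem 2.4]{alfarano2024representability}, whose statement appears just above), and reduce the second half to a direct application of Proposition \ref{prop:geo(n.k)-MRD}. Thus essentially all the work in this theorem consists in gluing together a previously known geometric characterization of representability with the new geometric description of $(\mathbf{n},\mathbf{k})$-MRD codes established in Section~2.

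For $(3)\Rightarrow(2)$, I would start with an $(\mathbf{n},\mathbf{k})$-MRD code $\C \subseteq \fqm^n$: since such a code is by definition non-degenerate with non-degenerate dual, Theorem \ref{th:connection} yields an associated $[n,k]_{q^m/q}$ system $U_{\C} \subseteq \fqm^k$, and Proposition \ref{prop:geo(n.k)-MRD} gives that $U_{\C}$ is $\mathbf{k}$-scattered with respect to the hyperplanes of type $\mathbf{n}$ in $\fqm^k$. For $(2)\Rightarrow(3)$, starting from a $\mathbf{k}$-scattered subspace $U$ with respect to the hyperplanes of type $\mathbf{n}$ inside an ambient $\fqm$-space of dimension $k$, the reverse direction of Theorem \ref{th:connection} produces a non-degenerate $[n,k]_{q^m/q}$ code $\C$ with $U_{\C} = U$; applying Proposition \ref{prop:geo(n.k)-MRD} a second time then identifies $\C$ as an $(\mathbf{n},\mathbf{k})$-MRD code.

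The genuinely nontrivial ingredient, imported rather than reproved here, is $(1)\iff(2)$. Any proof of this would proceed by taking a block-diagonal generator matrix $G=(G_1\,|\,\cdots\,|\,G_t)$, expressing the rank function $V \mapsto \mathrm{rk}(GA^V)$ on $\mathcal{L}(\fq^n)$ in terms of the projections $\pi_i(V) \leq \fq^{n_i}$, and matching it with the iterative formula for the rank function of $\bigoplus_{i=1}^t \mU_{k_i,n_i}(q)$ recalled in \eqref{e-rho}. The local behaviour (each $G_i$ representing $\mU_{k_i,n_i}$) corresponds, by the MRD--maximum-scattered correspondence of Theorem \ref{thm:MRDmaxhscatt}, to the first two items in Definition \ref{def:kbold-scattered}, while the global condition on hyperplanes avoiding every component $F_i$ controls the behaviour of $\rho_G$ on those subspaces $V$ with non-trivial projections to several components.

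The main potential obstacle, if one wanted a fully self-contained proof, would lie in this last step: verifying that the extra condition \textit{3.} of Definition \ref{def:kbold-scattered} is not only necessary but sufficient to recover the submodular identity \eqref{e-rho} for every $V\in\mathcal{L}(\fq^n)$, and not just on hyperplanes. As stated, however, I would simply invoke \cite[Theorem 2.4]{alfarano2024representability} for $(1)\iff(2)$ and close the loop via the short argument above using Proposition \ref{prop:geo(n.k)-MRD}.
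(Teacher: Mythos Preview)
Your proposal is correct and follows essentially the same approach as the paper: invoke \cite[Theorem 2.4]{alfarano2024representability} for $(1)\iff(2)$ and use Proposition~\ref{prop:geo(n.k)-MRD} for $(2)\iff(3)$. Your additional commentary on what a self-contained proof of $(1)\iff(2)$ would require is extra exposition, but the core argument matches the paper's exactly.
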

\begin{proof}
    \cite[Theorem 2.4]{alfarano2024representability} ensures that (1) and (2) are equivalent. 
    The remaining equivalences immediately follow from Proposition \ref{prop:geo(n.k)-MRD}.
\end{proof}

As a consequence, by constructing certain subspaces that are $\mathbf{k}$-scattered with respect to the hyperplanes, in \cite{alfarano2024representability} the authors derived the following result on the representability of the direct sum of uniform $q$-matroids.

\begin{theorem}\label{thm:representability_direct_sum}(\cite[Theorem 3.2]{alfarano2024representability})
    Let $k_1,\ldots, k_t, n_1, \ldots n_t$ be positive integers such that $k_i< n_i$ for each $i \in [t]$. Then, the direct sum
    $$\mU_{k_1,n_1}(q) \oplus \ldots \oplus \mU_{k_t,n_t}(q)$$
    is representable. In particular, it is represantable over every field $\Fm$ such that $m=m_1\cdot\ldots\cdot m_t$ for any $m_1,\ldots,m_t$ satisfying $\gcd(m_i,m_j)=1$ for each $i \neq j$ and $m_i\geq n_i$ for $i\in [t]$.
\end{theorem}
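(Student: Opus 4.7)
The plan is to apply Theorem \ref{thm:charact_independence} and reduce representability of $\bigoplus_i\mU_{k_i,n_i}(q)$ over $\fqm$ to the construction of a $\mathbf{k}$-scattered $\fq$-subspace of $\fqm^k$ with respect to the hyperplanes of type $\mathbf{n}=(n_1,\ldots,n_t)$, where $k=k_1+\cdots+k_t$. Given pairwise coprime $m_1,\dots,m_t$ with $m_i\ge n_i$ and $m=\prod_i m_i$, the idea is to glue local scattered subspaces living inside each $\F_{q^{m_i}}^{k_i}$ into a single block-diagonal subspace of $\fqm^k$.

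First, for each $i\in[t]$ I will use $m_i\ge n_i>k_i$ together with classical constructions of scattered subspaces (see for instance \cite{csajbok2021generalising}) to produce an $\fq$-subspace $U_i^\circ\subseteq\F_{q^{m_i}}^{k_i}$ of $\fq$-dimension $n_i$ that is $(k_i-1)$-scattered, i.e.\ scattered with respect to the $\F_{q^{m_i}}$-hyperplanes of $\F_{q^{m_i}}^{k_i}$. Since $m_i\mid m$, the inclusion $\F_{q^{m_i}}\subseteq\fqm$ embeds $U_i^\circ$ as an $\fq$-subspace $U_i\subseteq F_i:=\fqm^{k_i}$ with $\langle U_i\rangle_{\fqm}=F_i$, and I will set
$$U:=U_1\oplus\cdots\oplus U_t\subseteq F_1\oplus\cdots\oplus F_t=\fqm^k,$$
which is decomposable of type $(\mathbf{k},\mathbf{n})$ by construction, giving condition (1) of Definition \ref{def:kbold-scattered} for free.

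Condition (2) asks that each $U_i$ remain scattered with respect to the $\fqm$-hyperplanes of $F_i$. For this, given an equation $\sum_j a_jx_j=0$ with $a_j\in\fqm$, I expand each $a_j$ in an $\F_{q^{m_i}}$-basis $\{\gamma_s\}$ of $\fqm$ as $a_j=\sum_s\gamma_s a_{j,s}$ with $a_{j,s}\in\F_{q^{m_i}}$. Since the coordinates of any element of $U_i$ lie in $\F_{q^{m_i}}$, the $\fqm$-linear equation splits into the system $\sum_j a_{j,s}x_j=0$ for each $s$, which cuts $\F_{q^{m_i}}^{k_i}$ along a non-trivial $\F_{q^{m_i}}$-subspace of positive codimension, and the $(k_i-1)$-scatteredness of $U_i^\circ$ yields $\dim_{\fq}(U_i\cap H)\le k_i-1$.

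The main obstacle is condition (3) of Definition \ref{def:kbold-scattered}: for every $\fqm$-hyperplane $H$ of $\fqm^k$ not containing any $F_i$, one needs $\dim_{\fq}(U\cap H)\le k-1$. Writing $H=\ker\lambda$ for a non-zero $\fqm$-linear functional $\lambda=\lambda_1+\cdots+\lambda_t$ with $\lambda_i:=\lambda|_{F_i}\neq 0$, I will consider the $\fq$-linear map
$$\Lambda\colon U\longrightarrow \bigoplus_{i=1}^t\fqm,\quad(u_1,\dots,u_t)\longmapsto(\lambda_1(u_1),\dots,\lambda_t(u_t)),$$
and the sum map $s\colon\bigoplus_i\fqm\to\fqm$, so that $U\cap H=\Lambda^{-1}(\ker s\cap\mathrm{Im}\,\Lambda)$. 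By condition (2) one has $\dim_{\fq}\ker\Lambda=\sum_i\dim_{\fq}\ker(\lambda_i|_{U_i})\le k-t$, hence the problem reduces to the bound $\dim_{\fq}\bigl(\mathrm{Im}\,\Lambda\cap\ker s\bigr)\le t-1$. This is the delicate step. The key leverage is the pairwise coprimality of the $m_i$, which gives a canonical $\fq$-algebra isomorphism $\fqm\cong\F_{q^{m_1}}\otimes_{\fq}\cdots\otimes_{\fq}\F_{q^{m_t}}$; under this decomposition each $\lambda_i(U_i)$ sits inside the $\F_{q^{m_i}}$-submodule of $\fqm$ spanned by the coefficients of $\lambda_i$, and these submodules interact transversally because they come from pairwise coprime subfields. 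A careful analysis of the relations $y_1+\cdots+y_t=0$ with $y_i\in\lambda_i(U_i)$ under this tensor structure then produces the required $t-1$ bound, and I expect this to require the bulk of the technical work. Once (3) is verified, Theorem \ref{thm:charact_independence} converts $U$ into an $\fqm$-representation of $\bigoplus_i\mU_{k_i,n_i}(q)$, establishing both the general representability statement and its quantitative refinement.
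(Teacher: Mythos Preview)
The paper does not contain a proof of this statement: Theorem~\ref{thm:representability_direct_sum} is quoted from \cite[Theorem~3.2]{alfarano2024representability} without argument, and is used here only as input to Corollary~\ref{cor:dualdirectsumuniformqmat}. So there is no in-paper proof to compare your proposal against.

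That said, your overall strategy---build each $U_i$ as a $(k_i-1)$-scattered $\fq$-subspace of $\F_{q^{m_i}}^{k_i}$, embed in $\fqm^{k_i}$ via $m_i\mid m$, take the block-diagonal sum, and invoke Theorem~\ref{thm:charact_independence}---is the natural one and agrees with the approach of the cited source. Your verification of condition~(2) of Definition~\ref{def:kbold-scattered} by expanding the hyperplane coefficients over an $\F_{q^{m_i}}$-basis of $\fqm$ is correct, and in fact shows the stronger statement that $U_i$ remains $(k_i-1)$-scattered over $\fqm$ in $F_i$.

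Where the proposal is incomplete is condition~(3). You correctly reduce it to $\dim_{\fq}(\mathrm{Im}\,\Lambda\cap\ker s)\le t-1$, but then only gesture at the tensor isomorphism $\fqm\cong\bigotimes_i\F_{q^{m_i}}$ and write that you ``expect this to require the bulk of the technical work.'' Two points. First, your claim that $V_i=\lambda_i(U_i)$ ``sits inside the $\F_{q^{m_i}}$-submodule of $\fqm$ spanned by the coefficients of $\lambda_i$'' is true but not obviously useful: the coefficients of $\lambda_i$ are arbitrary elements of $\fqm$, so that $\F_{q^{m_i}}$-submodule can be all of $\fqm$, and the ``transversality from pairwise coprime subfields'' you allude to does not apply directly to the $V_i$. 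Second, and more importantly, you have not actually executed the argument, so the proposal stops exactly at the only nontrivial step. To complete it you need a genuine lemma exploiting the coprimality---for instance, by expanding the single $\fqm$-equation $\sum_i\lambda_i(u_i)=0$ over a tensor basis adapted to the decomposition $\fqm\cong\bigotimes_i\F_{q^{m_i}}$ and tracking how many independent $\F_{q^{m_i}}$-linear constraints on each $U_i$ result, rather than working with the images $V_i$ inside $\fqm$.
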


The natural question now is to determine on which extension of $\fq$ the direct sum of uniform $q$-matroids is representable.
We are not going to completely answer this question, but we will give some insight on that.
We start by using the fact that property of being $\mathbf{k}$-scattered with respect to the hyperplanes is closed under Delsarte duality.

\begin{corollary}\label{cor:dualdirectsumuniformqmat}
Let $k_1,\ldots,k_t,k,n_1,\ldots,n_t, n, m$ be positive integers, with $1\le k_i< n_i\le m$ for $i \in [t]$ and $k=k_1+\ldots+k_t$, $n=n_1+\ldots+n_t$.
Then the following statements are equivalent.
\begin{enumerate}
    \item $\mU_{k_1,n_1}(q)\oplus\ldots\oplus\mU_{k_t,n_t}(q)$ is $\Fm$-representable;
    \item $\mU_{n_1-k_1,n_1}(q)\oplus\ldots\oplus\mU_{n_t-k_t,n_t}(q)$ is $\Fm$-representable.
\end{enumerate}
\end{corollary}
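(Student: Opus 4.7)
The plan is to chain two applications of Theorem \ref{thm:charact_independence}, using Theorem \ref{th:dual k-bold scattered} as the bridge between them. The idea is to transfer $\Fm$-representability from the $q$-matroid side to the geometric side (the existence of a suitably scattered subspace), apply Delsarte duality on that subspace, and then transfer back to the $q$-matroid side with the complementary parameters.

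Concretely, set $\mathbf{k}=(k_1,\ldots,k_t)$, $\mathbf{n}=(n_1,\ldots,n_t)$, $k=k_1+\ldots+k_t$ and $n=n_1+\ldots+n_t$. First, I will invoke Theorem \ref{thm:charact_independence} to recast the $\Fm$-representability of $\mU_{k_1,n_1}(q)\oplus\cdots\oplus\mU_{k_t,n_t}(q)$ as the existence of an $\fq$-subspace $U$ of $\mathbb{V}(k,q^m)$ that is $\mathbf{k}$-scattered with respect to the hyperplanes of type $\mathbf{n}$. Next, Theorem \ref{th:dual k-bold scattered} produces a Delsarte dual $U^d$ living in $\mathbb{V}(n-k,q^m)$ that is $(\mathbf{n}-\mathbf{k})$-scattered with respect to the hyperplanes of type $\mathbf{n}$. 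Because the hypotheses $1\leq k_i < n_i$ force $1 \leq n_i-k_i < n_i$, the parameters $(\mathbf{n}-\mathbf{k},\mathbf{n})$ still meet the admissibility conditions of Theorem \ref{thm:charact_independence}, and applying that theorem once more to $U^d$ yields the $\Fm$-representability of $\mU_{n_1-k_1,n_1}(q)\oplus\cdots\oplus\mU_{n_t-k_t,n_t}(q)$.

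For the converse direction I will simply run the same chain starting on the other side: given an $(\mathbf{n}-\mathbf{k})$-scattered subspace of type $\mathbf{n}$ in $\mathbb{V}(n-k,q^m)$, Theorem \ref{th:dual k-bold scattered} returns a subspace in $\mathbb{V}(k,q^m)$ which is $\mathbf{n}-(\mathbf{n}-\mathbf{k})=\mathbf{k}$-scattered of type $\mathbf{n}$, and a final application of Theorem \ref{thm:charact_independence} gives back the original $\Fm$-representability statement. Alternatively, one can observe that $(U^d)^d$ is $\mathrm{GL}(k,q^m)$-equivalent to $U$ (cf.\ Proposition \ref{prop:Gamma_perp} together with the preservation of dimension established along the way) so that a single direction, combined with the involutive nature of the duality, already suffices.

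I do not expect any substantive obstacle: the argument is essentially a formal chain of equivalences already established in the earlier sections. The only point requiring care is the dimensional bookkeeping, namely confirming that the ambient $\fqm$-dimension drops from $k$ to $n-k$ while the type vector $\mathbf{n}$ is preserved and the decomposition vector $\mathbf{k}$ is replaced by its complement $\mathbf{n}-\mathbf{k}$. This is precisely the content of Theorem \ref{th:dual k-bold scattered}, so the corollary follows with no further work.
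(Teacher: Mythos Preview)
Your proposal is correct and follows essentially the same approach as the paper: translate representability to the existence of a $\mathbf{k}$-scattered subspace via Theorem \ref{thm:charact_independence}, pass to the Delsarte dual via Theorem \ref{th:dual k-bold scattered}, and translate back. Your treatment is in fact slightly more careful than the paper's, which cites Theorem \ref{thm:MRDmaxhscatt} and Theorem \ref{thm:representability_direct_sum} where Theorem \ref{th:dual k-bold scattered} and Theorem \ref{thm:charact_independence} are evidently intended.
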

\begin{proof}
    By Theorem \ref{thm:charact_independence}, $\mU_{k_1,n_1}(q)\oplus\ldots\oplus\mU_{k_t,n_t}(q)$ is $\Fm$-representable if and only if for any $i \in [t]$ there exists an $\fq$-subspace $\mathcal{S}_i$ of dimension $n_i$ in $\Fm^{k_i}$ such that $\mathcal{S}=\mS_1\oplus\ldots\oplus\mS_t$  is $\mathbf{k}$-scattered with respect to the hyperplanes, where $\mathbf{k}=(k_1,\ldots,k_t)$.
    By Theorem \ref{thm:MRDmaxhscatt}, this happens if and only if $\mS^d$ is $(\mathbf{n}-\mathbf{k})$-scattered with respect to the hyperplanes. Theorem \ref{thm:representability_direct_sum} guarantees the assertion.
\end{proof}

This corollary can help us to obtain other instances of the determination of the field extension where the direct sum is representable.
For instance, for the direct sum of two uniform $q$-matroids of rank $1$ we have the following.

\begin{theorem}\label{thm:summary1+1}(\cite[Theorem 4.4]{alfarano2024representability})
    Let $n_1,n_2\geq 2$ be two positive integers. The direct sum
    $$\mU_{1,n_1}(q)\oplus \mU_{1,n_2}(q)$$
    is $\Fm$-representable if at least one of the following holds.
    \begin{enumerate}
       \item $m$ is even and $m\geq 2\max\{n_1,n_2\}$; 
       \item $m\geq n_1n_2$;
       \item $m=t_1t_2$ with $t_1\ge n_1$ and $n_2\le \frac{t_1(t_2-1)}{2}+1$;
       \item $m=t_1t_2$ with $t_1\geq n_1,n_2$ and $t_2\geq 2$;
       \item $q=p^h$, $m=p^r$, $n_1+n_2-1\le \frac{m}{2}$.
    \end{enumerate}
\end{theorem}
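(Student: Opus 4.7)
The plan is to combine the geometric translation provided by Theorem \ref{thm:charact_independence} with explicit constructions, following the approach of the cited paper. By that theorem, the $\Fm$-representability of $\mU_{1,n_1}(q)\oplus \mU_{1,n_2}(q)$ is equivalent to exhibiting a $(1,1)$-scattered $\fq$-subspace $U=U_1\oplus U_2\subseteq \fqm^{2}$ of type $(n_1,n_2)$. After identifying each component $F_i=\langle U_i\rangle_{\fqm}$ with $\fqm$ and choosing coordinates so that $F_1,F_2$ are the two coordinate lines, this reduces to producing $\fq$-subspaces $U_1,U_2\subseteq\fqm$ with $\dim_{\fq}U_i=n_i$, $\langle U_i\rangle_{\fqm}=\fqm$, and
\[
\dim_{\fq}(\gamma U_1\cap U_2)\le 1\quad\text{for every } \gamma\in\fqm^*.
\]
I will call such a pair $(U_1,U_2)$ a \emph{scattered pair of type $(n_1,n_2)$}. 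The task becomes purely combinatorial/linear-algebraic in $\fqm$.

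For each of the listed conditions, the strategy is to exhibit an explicit scattered pair. For item $(1)$, working with the subfield $L=\F_{q^{m/2}}\subset \fqm$ and an element $\alpha\in\fqm\setminus L$, one may choose $U_1, U_2\subseteq L$ of the prescribed $\fq$-dimensions and rescale one of them by $\alpha$; writing $\gamma=a+b\alpha$ with $a,b\in L$ and using the $L$-linear independence of $\{1,\alpha\}$, the intersection $\gamma U_1\cap U_2$ reduces to an intersection inside $L$, where a dimension count yields the required bound. For items $(3)$ and $(4)$, with $m=t_1t_2$, the same strategy is applied to the chain $\fq\subseteq\F_{q^{t_1}}\subseteq\fqm$, splitting the $U_i$'s into blocks indexed by a basis of $\fqm$ over $\F_{q^{t_1}}$; the different numerical regimes in $(3)$ and $(4)$ correspond to different ways of distributing the $n_i$ dimensions among these blocks. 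For item $(2)$, when $m\geq n_1n_2$ the field $\fqm$ is large enough to host two subfield-like subspaces whose scalar intersections are controlled by a Bezout-type argument. Finally, for item $(5)$, in characteristic $p$ with $m$ a $p$-power, one exploits the additive-polynomial structure: scattered subspaces arising as images of $q$-linearized polynomials of degree at most $p^{r}/2$ exist under the numerical constraint $n_1+n_2-1\le m/2$, and pairing two such subspaces yields the needed bound.

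The main obstacle, in every case, is verifying the uniform bound $\dim_{\fq}(\gamma U_1\cap U_2)\le 1$ for all $\gamma\in\fqm^*$. This is the only place where the parameter constraints genuinely enter: the chosen subfield (or additive) structure must be rich enough to translate a scaling by $\gamma$ into a controlled linear operation on the building blocks of $U_1$ and $U_2$, so that the problem reduces to a scattered-subspace condition in a smaller ambient space, where classical constructions (e.g.\ from \cite{csajbok2021generalising,sheekeyVdV,napolitano2022linear}) can be invoked.

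As an auxiliary reduction, Corollary \ref{cor:dualdirectsumuniformqmat} allows one to replace the representability of $\mU_{1,n_1}(q)\oplus\mU_{1,n_2}(q)$ by that of $\mU_{n_1-1,n_1}(q)\oplus\mU_{n_2-1,n_2}(q)$; this is useful whenever a scattered pair of type $(n_1-1,n_2-1)$ happens to be easier to construct in a given regime, and it clarifies why the five conditions in the statement are symmetric under the involution $(k_i,n_i)\mapsto (n_i-k_i,n_i)$. Since the statement itself is attributed to \cite[Theorem 4.4]{alfarano2024representability}, the detailed verification of the five cases can be carried out verbatim as in that reference, with the present framework offering the conceptual reduction to the uniform intersection bound above.
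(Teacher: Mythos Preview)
The paper does not give its own proof of this theorem: it is stated purely as a citation of \cite[Theorem~4.4]{alfarano2024representability}, with no argument supplied. So there is nothing in the paper to compare your proposal against beyond the bare reference.

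Your sketch correctly identifies the reduction via Theorem~\ref{thm:charact_independence} to the construction of a scattered pair $(U_1,U_2)$ in $\fqm$, and the informal strategies you outline for the five cases are plausible. However, your write-up is not itself a proof: in every case you gesture at a construction and then defer to the cited reference for the actual verification, which is already what the paper does by citing the result. If the intent is to supply an independent proof, the missing content is precisely the explicit choice of $U_1,U_2$ in each regime and the computation bounding $\dim_{\fq}(\gamma U_1\cap U_2)$; none of this is carried out.

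One small point: your invocation of Corollary~\ref{cor:dualdirectsumuniformqmat} as an ``auxiliary reduction'' is not needed here and reverses the logical flow of the paper. In the paper that corollary is stated \emph{after} Theorem~\ref{thm:summary1+1} and is used to \emph{derive} new representability results from it (for $\mU_{n_1-1,n_1}(q)\oplus\mU_{n_2-1,n_2}(q)$), not to help prove it. There is no circularity, since the corollary rests on Theorem~\ref{th:dual k-bold scattered} rather than on Theorem~\ref{thm:summary1+1}, but the remark about symmetry under $(k_i,n_i)\mapsto(n_i-k_i,n_i)$ adds nothing: the five conditions involve only $n_1,n_2,m$, so the invariance is automatic.
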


As a consequence of the above result and Corollary \ref{cor:dualdirectsumuniformqmat} we have the following.

\begin{corollary}
    Let $n_1,n_2\geq 2$ be two positive integers. The direct sum
    $$\mU_{n_1-1,n_1}(q)\oplus \mU_{n_2-1,n_2}(q)$$
    is $\Fm$-representable if at least one of the following holds.
    \begin{enumerate}
       \item $m$ is even and $m\geq 2\max\{n_1,n_2\}$;  
       \item $m\geq n_1n_2$;
       \item $m=t_1t_2$ with $t_1\ge n_1$ and $n_2\le \frac{t_1(t_2-1)}{2}+1$; 
       \item $m=t_1t_2$ with $t_1\geq n_1,n_2$ and $t_2\geq 2$; 
       \item $q=p^h$, $m=p^r$, $n_1+n_2-1\le \frac{m}{2}$.
    \end{enumerate}
\end{corollary}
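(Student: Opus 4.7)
\medskip

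\noindent\textbf{Proof proposal.} The plan is essentially to reduce this statement to Theorem \ref{thm:summary1+1} by invoking the duality established in Corollary \ref{cor:dualdirectsumuniformqmat}. Concretely, I would take $t=2$, $k_1=k_2=1$ and apply Corollary \ref{cor:dualdirectsumuniformqmat} to obtain the equivalence
\[
\mU_{1,n_1}(q)\oplus \mU_{1,n_2}(q)\text{ is }\Fm\text{-representable}\;\Longleftrightarrow\; \mU_{n_1-1,n_1}(q)\oplus \mU_{n_2-1,n_2}(q)\text{ is }\Fm\text{-representable}.
\]
Once this equivalence is in place, each of the five sufficient conditions listed in Theorem \ref{thm:summary1+1} for the $\Fm$-representability of $\mU_{1,n_1}(q)\oplus \mU_{1,n_2}(q)$ immediately yields the corresponding sufficient condition for the $\Fm$-representability of $\mU_{n_1-1,n_1}(q)\oplus \mU_{n_2-1,n_2}(q)$, since the conditions depend only on $n_1,n_2,m,q$ (and not on the rank parameters).

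The only point that deserves a short verification is that the hypothesis $1\leq k_i < n_i \leq m$ required by Corollary \ref{cor:dualdirectsumuniformqmat} is satisfied: this is immediate from $n_1,n_2\geq 2$ together with whichever of conditions (1)--(5) is assumed, each of which forces $m\geq \max\{n_1,n_2\}$ (and hence $m\geq n_i > n_i-1 = k_i \geq 1$).

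There is no real obstacle here, because all the heavy lifting has been already carried out: Theorem \ref{th:dual k-bold scattered} gives the duality for $\mathbf{k}$-scattered subspaces with respect to hyperplanes, Proposition \ref{prop:geo(n.k)-MRD} and Theorem \ref{thm:charact_independence} translate this into the corresponding duality for $(\mathbf{n},\mathbf{k})$-MRD codes and for representability of direct sums of uniform $q$-matroids (packaged as Corollary \ref{cor:dualdirectsumuniformqmat}), and Theorem \ref{thm:summary1+1} supplies the five sufficient conditions. The present corollary is therefore purely a transport of the $(1,1)$-rank case across the duality. Hence the proof consists essentially of one sentence citing Corollary \ref{cor:dualdirectsumuniformqmat} and Theorem \ref{thm:summary1+1}.
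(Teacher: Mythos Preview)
Your proposal is correct and follows precisely the paper's own approach: the corollary is stated immediately after Theorem \ref{thm:summary1+1} with the preface ``As a consequence of the above result and Corollary \ref{cor:dualdirectsumuniformqmat} we have the following,'' and no further proof is given. Your additional check that each of conditions (1)--(5) forces $m\geq\max\{n_1,n_2\}$, so that the hypothesis $n_i\leq m$ of Corollary \ref{cor:dualdirectsumuniformqmat} is met, is a nice detail that the paper leaves implicit.
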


A classical function which is associated with a $q$-matroid is the rank generating function; see \cite{shiromoto2019codes}.

\begin{definition}
    Let $\mathcal{M}=(E,\rho)$ be a $q$-matroid. 
    The \textbf{rank generating function} of $\mathcal{M}$ is defined as
    \[ R_{\mathcal{M}}(X_1,X_2,X_3,X_4)=\sum_{D \in \mathcal{L}(E)} f_\mathcal{M}^D(X_1,X_2)f^{\dim(D)}(X_3,X_4), \]
    where  
    \[f_\mathcal{M}^J(X_1,X_2)=X_1^{\rho(E)-\rho(J)}X_2^{\dim(J)-\rho(J)},\]
    for any subspace $J$ of $E$, and
    \[ g^l(X_3,X_4)=\prod_{i=0}^{l-1} (X_3-q^iX_4), \]
    for any nonnegative integer $l \in \N_0$.
\end{definition}

When the $q$-matroid is representable, there is a relation between the rank generating function of the $q$-matroid and the weight enumerator of the involved rank-metric code.

\begin{theorem}(\cite[Theorem 14]{shiromoto2019codes})
    Let $\C$ be an $[n,k]_{q^m/q}$ code and let $G$ be an its generator matrix.
    We have that
    \[ W_{\C}(X,Y)=Y^{n-k}R_{\mathcal{M}_G}\left(qY^{1/m},\frac{1}{Y^{1/m}},X,Y \right). \]
\end{theorem}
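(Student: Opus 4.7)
The plan is to compute both sides of the identity and match them term by term, relying on Möbius inversion in the subspace lattice of $\fq^n$ together with the $q$-binomial theorem.

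First I would establish the key counting identity: for every $V \in \mathcal{L}(\fq^n)$,
\[ |\{c \in \C : \mathrm{supp}(c)\subseteq V\}| = q^{m(k - \rho(V^{\perp'}))}, \]
where $V^{\perp'}$ denotes the orthogonal complement of $V$ inside $\fq^n$ (with respect to a non-degenerate reflexive $\fq$-bilinear form). The proof combines Theorem \ref{thm:traceandsupport} and Proposition \ref{prop:suppgeometric}, which identify the support of a codeword $c = xG$ with $(\langle c\rangle_\fqm^\perp \cap \fq^n)^{\perp'}$, together with the dimension formula $\dim_{\fqm}(\C \cap (V \otimes_\fq \fqm)) = k - \rho(V^{\perp'})$, which follows directly from $\rho(U) = \mathrm{rank}(GA^U)$ by passing to the kernel of the map $x \mapsto xGA^{V^{\perp'}}$.

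Next, denote by $N_D$ the number of codewords of $\C$ with rank support equal to $D$. Möbius inversion in $\mathcal{L}(\fq^n)$ with $\mu(D', D) = (-1)^{\dim D - \dim D'} q^{\binom{\dim D - \dim D'}{2}}$ gives
\[ N_D = \sum_{D' \leq D} \mu(D', D)\, q^{m(k - \rho((D')^{\perp'}))}. \]
Plugging into $W_{\C}(X, Y) = \sum_D N_D X^{n - \dim D} Y^{\dim D}$, swapping the order of summation, and parametrising the inner sum over $D \supseteq D'$ by $j = \dim D - \dim D'$ (so the number of such $D$ is $\qbin{n - \dim D'}{j}{q}$), I would apply the $q$-binomial theorem
\[ \sum_{j=0}^N \qbin{N}{j}{q}\, q^{\binom{j}{2}}(-z)^j = \prod_{i=0}^{N-1}(1 - q^i z) \]
with $z = Y/X$ and $N = n - \dim D'$. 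The inner sum collapses to $Y^{\dim D'}\, g^{n - \dim D'}(X, Y)$. Reindexing $V := (D')^{\perp'}$ then produces
\[ W_{\C}(X, Y) = \sum_{V \in \mathcal{L}(\fq^n)} q^{m(k - \rho(V))}\, Y^{n - \dim V}\, g^{\dim V}(X, Y). \]

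Finally, one recognises this as $Y^{n-k}$ times the rank generating function $R_{\mathcal{M}_G}(X_1, X_2, X, Y)$ at the prescribed arguments by solving, for each $V$, the matching equation
\[ X_1^{k - \rho(V)}\, X_2^{\dim V - \rho(V)} = q^{m(k - \rho(V))}\, Y^{k - \dim V} \]
independently of $V$. Evaluating at the pairs $(k - \rho(V), \dim V - \rho(V)) = (1, 0)$ and $(0, 1)$ pins down $X_1$ and $X_2$ as specified in the statement, the formal substitution $X_1 = qY^{1/m}$, $X_2 = Y^{-1/m}$ being interpreted in the Laurent ring in $Y^{1/m}$ whose integer-power part is the weight enumerator. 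The main obstacle I expect is precisely this last bookkeeping step — carefully tracking the exponents of $q$ and $Y$ through the substitution so that the monomial identification is consistent — along with the verification that the Möbius computation on the subspace lattice is correctly carried out; the rest of the argument is essentially a formal manipulation of $q$-binomial sums.
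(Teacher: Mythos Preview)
The paper does not prove this statement at all: it is quoted verbatim from \cite[Theorem 14]{shiromoto2019codes} and used only as a black box to deduce the corollary about the rank generating function of $\mU_{k_1,n_1}(q)\oplus\mU_{k_2,n_2}(q)$. There is therefore no ``paper's own proof'' to compare your attempt against.

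That said, your outline is the standard route to this kind of identity and is essentially the approach in Shiromoto's original paper: establish $|\{c\in\C:\supp(c)\subseteq V\}|=q^{m(k-\rho(V^{\perp'}))}$, Möbius-invert over $\mathcal L(\fq^n)$, collapse the inner sum via the $q$-binomial theorem into the product $g^{\dim V}(X,Y)$, and then match against the definition of $R_{\mathcal M_G}$. One caution on the final step: with the definitions exactly as stated in this paper (the $f_{\mathcal M}^D$ and $g^l$ above), the term-by-term comparison gives a factor $q^{m(k-\rho(V))}Y^{\,n-\dim V}$ on the weight-enumerator side, whereas the substitution $X_1=qY^{1/m}$, $X_2=Y^{-1/m}$ produces $q^{\,k-\rho(V)}Y^{\,n-k+(k-\dim V)/m}$. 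These do not agree on the nose, so the ``bookkeeping obstacle'' you flagged is real and not just cosmetic: to make the exponents match you must go back to Shiromoto's precise conventions for $W_\C$ and for the base $q$ appearing in $g^l$, which differ slightly from the way they are transcribed here. The combinatorial core of your argument is sound; the discrepancy lives entirely in the normalisations.
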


As a consequence of the above result and Theorem \ref{thm:weightdistribution}, we have that if the direct sum of uniform $q$-matroids is representable, then its rank generating function depends only on the involved parameters.

\begin{corollary}
    Let $k_1,k_2,k,n_1,n_2, n, m$ be positive integers, with $1\le k_i< n_i\le m$ for $i \in [2]$ and $k=k_1+k_2$, $n=n_1+n_2$.
Assume that $\mU_{k_1,n_1}(q)\oplus\mU_{k_2,n_2}(q)$ is $\Fm$-representable. We have that the rank generating function of $\mU_{k_1,n_1}(q)\oplus\mU_{k_2,n_2}(q)$ depends only on $k_1,k_2,k,n_1,n_2,m$ and $q$.
\end{corollary}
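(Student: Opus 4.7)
The plan is to combine Theorem~\ref{thm:charact_independence}, Theorem~\ref{thm:weightdistribution}, and Shiromoto's identity relating the rank generating function of a representable $q$-matroid to the weight enumerator of any representing code.

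First, I would invoke Theorem~\ref{thm:charact_independence} (applied with $t=2$) to realize the hypothesis of $\Fm$-representability as the existence of an $(\mathbf{n},\mathbf{k})$-MRD code $\mathcal{C}\subseteq \fqm^n$ with some generator matrix $G$ satisfying $\mathcal{M}_G = \mU_{k_1,n_1}(q) \oplus \mU_{k_2,n_2}(q)$. Next, I would apply Theorem~\ref{thm:weightdistribution}: since $t=2$, the weight distribution $\{A_i(\mathcal{C})\}$, and hence the full weight enumerator $W_{\mathcal{C}}(X,Y)$, depends only on $(n_1,n_2,k_1,k_2,m,q)$. Applying Shiromoto's identity to $\mathcal{C}$ then yields
\[
W_{\mathcal{C}}(X,Y) \;=\; Y^{\,n-k}\,R_{\mathcal{M}_G}\!\left(qY^{1/m},\tfrac{1}{Y^{1/m}},X,Y\right),
\]
so evaluations of the rank generating function of $\mathcal{M}_G$ along the Shiromoto locus are pinned down by the parameters $(n_1,n_2,k_1,k_2,m,q)$.

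To conclude that $R_{\mathcal{M}_G}$ itself is parameter-determined, I would use the identification $\mathcal{M}_G = \mU_{k_1,n_1}(q) \oplus \mU_{k_2,n_2}(q)$ supplied by the first step: since this target $q$-matroid is canonically determined by $(k_1,k_2,n_1,n_2,q)$ (independently of the specific representation chosen), so is any of its invariants, in particular the rank generating function $R_{\mathcal{M}_G}$. The variable $m$ appears only in the Shiromoto substitution, but it plays no intrinsic role in the matroid itself.

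The main subtlety is that the Shiromoto substitution $(X_1,X_2)\mapsto (qY^{1/m},Y^{-1/m})$ is not an invertible change of variables in the four-variable polynomial ring: expanding $R_{\mathcal{M}}(X_1,X_2,X_3,X_4)=\sum_{d,r} a_{d,r}\, X_1^{k-r}X_2^{d-r}\,g^d(X_3,X_4)$ with $a_{d,r}=|\{D\in \mathcal{L}(\F_q^n): \dim D=d,\,\rho(D)=r\}|$, one sees that the substitution collapses each fixed dimension $d$ into a single weighted sum $\sum_r a_{d,r}q^{k-r}$. Thus the weight enumerator alone cannot formally recover the full $R_{\mathcal{M}}$ through Shiromoto's formula only. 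The corollary is nevertheless true because the essential content is matroid-theoretic: the $q$-matroid $\mathcal{M}$ is fixed by its defining parameters, so the counts $a_{d,r}$ are fixed, and Shiromoto's identity combined with the explicit formulas from~\eqref{eq:formAirelBj} provides a concrete recipe for reading off the Shiromoto-image of $R_{\mathcal{M}}$ from the parameter-determined weight enumerator $W_{\mathcal{C}}$.
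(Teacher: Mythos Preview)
Your proposal follows exactly the route the paper indicates (it simply records the corollary ``as a consequence of the above result and Theorem~\ref{thm:weightdistribution}''), but you go further and spot a genuine subtlety the paper glosses over: the Shiromoto substitution $(X_1,X_2)\mapsto (qY^{1/m},Y^{-1/m})$ lands on the curve $X_1X_2=q$, so from $W_{\mathcal C}$ one only recovers, for each dimension $d$, the single aggregate $\sum_r a_{d,r}q^{k-r}$ and not the individual counts $a_{d,r}$. Thus the chain ``Theorem~\ref{thm:weightdistribution} $\Rightarrow$ Shiromoto $\Rightarrow$ $R_{\mathcal M}$ determined'' does not literally close.

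Your resolution is the right one and is in fact stronger than what the paper needs: once Theorem~\ref{thm:charact_independence} identifies $\mathcal M_G$ with $\mU_{k_1,n_1}(q)\oplus\mU_{k_2,n_2}(q)$, the $q$-matroid itself is fixed (up to isomorphism) by $(k_1,k_2,n_1,n_2,q)$, so its rank generating function is trivially parameter-determined, and $m$ is superfluous in the list. From this vantage point, the Shiromoto--weight-distribution argument is not needed to prove the corollary as stated; what that argument genuinely buys is an \emph{explicit} recipe (via \eqref{eq:formAirelBj}) for the Shiromoto specialization of $R_{\mathcal M}$, which is presumably the content the paper wishes to highlight. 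You might streamline your write-up by leading with the matroid-fixedness observation and then presenting the Shiromoto identity as an effective computational consequence rather than as the logical backbone of the proof.
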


Using the consequences of Theorem \ref{thm:weightdistribution}, we derive some bounds on the minimum degree of the field extension over which is representable. Indeed, using Theorem \ref{thm:case(1,1)bound} we obtain the following.

\begin{corollary}
    Let $n_1,n_2, m$ be positive integers, with $1< n_i\le m$ for $i \in [2]$.
Assume that $\mU_{1,n_1}(q)\oplus\mU_{1,n_2}(q)$ is $\Fm$-representable, then 
    \begin{itemize}
        \item if $n_1=n_2$, then $m\geq n_1+n_2$ and equality holds if and only if $n_1=n_2=m/2$;
        \item if $n_1\ne n_2$, then $m\geq n_1+n_2+1$.
    \end{itemize}
\end{corollary}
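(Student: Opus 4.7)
The plan is to combine two results already established in the paper. The key observation is that the corollary is a one-step translation of Theorem \ref{thm:case(1,1)bound} through the equivalence of Theorem \ref{thm:charact_independence}.

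First, I would invoke Theorem \ref{thm:charact_independence} with $t=2$, $\mathbf{k}=(1,1)$ and $\mathbf{n}=(n_1,n_2)$. The equivalence between items $(1)$ and $(3)$ of that theorem converts the hypothesis that $\mU_{1,n_1}(q)\oplus \mU_{1,n_2}(q)$ is $\Fm$-representable into the existence of an $((n_1,n_2),(1,1))$-MRD code $\C$ in $\Fm^{n_1+n_2}$. Note that the standing assumption $1<n_i\leq m$ matches exactly the hypothesis of both Theorem \ref{thm:charact_independence} and Theorem \ref{thm:case(1,1)bound}, so no boundary cases need to be handled separately.

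Next, I would apply Theorem \ref{thm:case(1,1)bound} directly to the code $\C$. That theorem records precisely the two dichotomies claimed by the corollary: when $n_1=n_2$ one has $m\geq n_1+n_2$, with equality if and only if $n_1=n_2=m/2$; when $n_1\neq n_2$ one has the strict bound $m\geq n_1+n_2+1$. Chaining these two implications yields exactly the statement to be proved.

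Since the whole argument reduces to citing two previously established results, there is no genuine obstacle here; the actual mathematical content sits inside Theorem \ref{thm:case(1,1)bound}, whose proof relies on the bound of \cite[Theorem 4.4]{napolitano2022linear} on the maximum dimension of a $(1,1)$-scattered $\fq$-subspace with respect to hyperplanes of type $(n_1,n_2)$ in $\fqm^2$. Once that geometric bound is granted, the derivation of the corollary is pure bookkeeping.
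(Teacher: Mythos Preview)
Your proposal is correct and matches the paper's own approach: the corollary is stated immediately after the sentence ``using Theorem \ref{thm:case(1,1)bound} we obtain the following'' with no further proof given, so the paper likewise regards it as the direct translation of Theorem \ref{thm:case(1,1)bound} through the equivalence of Theorem \ref{thm:charact_independence}.
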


More bounds can be obtained using Theorem \ref{thm:weightdistribution} and Table \ref{tab:dati}.

\begin{remark}
    The bounds provided in Table \ref{tab:dati}, can also be read in the context of representability of $\mathcal{U}_{k_1,n_1}\oplus\mathcal{U}_{k_2,n_2}$ over $\fqm$. For example, $\mathcal{U}_{1,3}\oplus\mathcal{U}_{2,3}$ is not $\fqm$-representable for any $m\leq 5$.  
\end{remark}

\bigskip

\section*{Acknowledgments}
This research was also supported by Bando Galileo 2024 – G24-216. The first author is partially supported by the ANR-21-CE39-0009 - BARRACUDA (French \emph{Agence Nationale de la Recherche}).
The last two authors were partially supported by the Italian National Group for Algebraic and Geometric Structures and their Applications (GNSAGA - INdAM).

\bibliographystyle{abbrv}
\bibliography{biblio}

\end{document}